\documentclass[11pt]{article}

\usepackage[a4paper,margin=27mm]{geometry}
\usepackage{amsmath,amssymb,amsthm,mathtools}

\usepackage{booktabs,array,multirow}
\usepackage{tikz}
\usepackage{array,longtable}
\usetikzlibrary{matrix,arrows,positioning,fit,patterns}
\usepackage[dvipsnames]{xcolor}

\usepackage[colorlinks=true,linkcolor=MidnightBlue,citecolor=OliveGreen,urlcolor=BrickRed]{hyperref}
\usepackage{microtype}
\hypersetup{
  pdftitle={Equidistribution of Mesh Patterns of Length Two on Involutions},
  pdfauthor={Author name omitted for review}
}

\newtheorem{theorem}{Theorem}[section]
\newtheorem{lemma}[theorem]{Lemma}

\theoremstyle{definition}

\newtheorem{remark}[theorem]{Remark}
\newtheorem{example}[theorem]{Example}

\newcommand{\Sn}{\mathfrak S_n}
\newcommand{\In}{\mathcal I_n}
\newcommand{\occ}{\operatorname{occ}}
\newcommand{\pos}{\operatorname{pos}}
\newcommand{\Occ}{\operatorname{Occ}}
\newcommand{\cl}{\operatorname{cl}}
\newcommand{\Des}{\operatorname{Des}}
\newcommand{\lmin}{\operatorname{lmin}}
\newcommand{\rmin}{\operatorname{rmin}}
\newcommand{\rmax}{\operatorname{rmax}}

\newcommand{\class}[1]{\textnormal{INV-#1}}
\newcommand{\pattern}[4]{%
  \raisebox{0.6ex}{%
    \begin{tikzpicture}[scale=0.35,baseline=(current bounding box.center),#1]
      \foreach \x/\y in {#4} \fill[gray!50] (\x,\y) rectangle +(1,1);
      \draw (0.01,0.01) grid (#2+0.99,#2+0.99);
      \foreach \x/\y in {#3} \filldraw (\x,\y) circle (6pt);
    \end{tikzpicture}}%
}
\newcommand{\pspace}{\hspace{0.7mm}\allowbreak}
\newcolumntype{C}[1]{>{\centering\arraybackslash}p{#1}}
\newcommand{\orbitcell}[1]{\rule[-0.4mm]{0pt}{5.8mm}#1}

\begin{document}
\begin{center}
{\large \bf  Equidistribution of mesh patterns of length 2 on involutions}
\end{center}

\begin{center}
Xinyu Su$^{a}$ and Philip B. Zhang$^{b*}$
\\[6pt]

$^{a,b}$College of Mathematical Sciences \& Institute of Mathematics and Interdisciplinary Sciences, Tianjin Normal University, \\ Tianjin  300387, P. R. China\\[6pt]
Email:
 $^{a}${\tt suxinyu2220@163.com},
      $^{b*}${\tt zhang@tjnu.edu.cn}
\end{center}

\begin{abstract}
We classify all $1024$ length-two mesh patterns on involutions into
exactly $287$ equidistribution classes, consisting of
$262$ trivial classes and $25$ nontrivial classes. Since many bijections on
arbitrary permutations do not preserve the involution condition, our
proofs use several involution-preserving bijections, including
conjugation and local adjacent-transposition operations, transposition of the RSK recording tableau,
prefix-rank swap bijections and marked-occurrence bijections. In particular, we establish the
equidistribution of $\class{099}$, a nontrivial class closely related
to the unrestricted Class~69, by adapting the
marked-occurrence bijection of Zhang and Zhao through a symmetric
closure of the marked set. We also obtain explicit distribution formulas and recurrences for
several classes, including continued fractions obtained by
specializing known labeled Motzkin-path formulas and rational
expressions derived from paired skew decompositions of involutions.
\end{abstract}

\vspace{1em}
\noindent\textbf{Keywords:} mesh patterns, involutions, equidistribution, RSK, continued fractions, labeled Motzkin paths, skew decompositions.

\noindent\textbf{AMS Subject Classifications:} 05A05, 05A15, 05A19

\section{Introduction}
\label{sec}
Let $\mathfrak{S}_n$ denote the symmetric group on $[n]:=\{1,2,\ldots,n\}$. Mesh patterns, introduced by Br\"and\'en and Claesson~\cite{BrClaesson}, generalize classical permutation patterns. A mesh pattern is a pair $(\tau,R)$, where $\tau\in\mathfrak S_k$ and
$R\subseteq \{0,1,\ldots,k\}\times\{0,1,\ldots,k\}$ specifies a set of shaded cells.
 An occurrence of $(\tau,R)$ in $\pi\in\mathfrak{S}_n$ is an occurrence of $\tau$ for which the corresponding shaded regions of the induced grid contain no additional points of the plot of $\pi$. For example, the following length-three mesh pattern
$$
\pattern{scale=0.6}{3}{1/1,2/3,3/2}{0/0,1/2,2/1,2/3,3/0,3/1}
$$
has underlying pattern $132$, and its shaded cells indicate the regions that must contain no additional points. The classification of the equidistribution of length-two mesh patterns on unrestricted permutations has been studied extensively in \cite{Hilmarsson,KitaevZhang,HanZeng,SuKitaevZhang,FangEtAl} and was completed by Zhang and Zhao~\cite{ZhangZhao}: there are exactly 105 equidistribution classes and 46 Wilf-equivalence classes on $\mathfrak{S}_n$.

We now restrict the ambient set from $\mathfrak S_n$ to $\mathcal I_n$, the set of involutions in $\mathfrak S_n$, and study the corresponding distributions of mesh pattern occurrences. This restriction changes the available proof methods.
Several bijections used on $\mathfrak{S}_n$, such as complementing values on one side of the maximum, reversing a prefix, or moving blocks, need not preserve the condition $\pi=\pi^{-1}$. At the same time, the plot of every involution is symmetric about the main diagonal, and this symmetry enables arguments specific to the involution setting.

Pattern statistics on involutions have been studied in several related settings. Barnabei, Castronuovo, and Silimbani~\cite{BarnabeiCastronuovoSilimbani} obtained a general formula for enumerating occurrences of Hertzsprung patterns in involutions and determined the distributions of all Hertzsprung patterns of lengths two and three. An occurrence of a Hertzsprung pattern is contiguous in both positions and values. Thus, Hertzsprung patterns form a special subclass of bivincular patterns, and hence of mesh patterns, so these results do not directly extend to the general mesh conditions considered here. Fang, Fu, Kitaev, Li, Su, and Sun~\cite{FangEtAl}
also proposed related equidistribution conjectures for short mesh
patterns on involutions, which motivate the present study. The Eulerian distribution on involutions has also been studied extensively; see Dukes~\cite{Dukes}, Guo and Zeng~\cite{GuoZeng}, Chow~\cite{Chow}, and Wang~\cite{Wang}.

Our main result gives a complete classification of length-two mesh patterns up to equidistribution on involutions. The inverse and reverse-complement operations partition the patterns into symmetry orbits. We call an equidistribution class \emph{trivial} if it consists of a single symmetry orbit, and \emph{nontrivial} otherwise. The $1024$ patterns form precisely $287$ equidistribution classes, of which $262$ are trivial and $25$ are nontrivial. The nontrivial classes require additional enumerative, geometric, and bijective arguments.

Besides completing the classification, we obtain explicit distribution
formulas for a number of the nontrivial classes. For $\class{033}$ and
$\class{176}$, we identify the relevant occurrences with
statistics in the labeled Motzkin-path model of Barnabei et
al.~\cite{BarnabeiMaxima}. The corresponding continued fractions then follow from their Theorem~3.5 by suitable specializations, within the classical continued-fraction framework of Flajolet~\cite{Flajolet}.
The distribution for $\class{161}$ is obtained from that of
$\class{033}$ by a reciprocity relation. Other classes lead naturally to recurrences obtained by examining the cycle containing the largest element. For example, the distributions in INV-004 and those arising from left-to-right minima admit explicit recurrences.

A second recurring method is decomposition into indecomposable
components. In the canonical skew decomposition of an involution,
inversion reverses the components, so the noncentral components occur
in inverse pairs around a possibly empty central component. By
assigning different weights to the outer pair, the inner pairs, and
the center, we obtain the distribution formulas for
$\class{041}$ and $\class{272}$. Direct-sum decompositions are used
similarly in several other cases; for instance, the distribution in
$\class{088}$ is governed by singleton direct-sum components, while
the classes $\class{218}$, $\class{219}$, and $\class{228}$ are
enumerated by identifying the direct-sum components that support an
occurrence.

Several further classes admit direct but nontrivial enumerations.
For $\class{085}$ and $\class{250}$, involutivity turns the shading
conditions into systems of forced transpositions and yields explicit
finite sums involving the involution numbers. For $\class{125}$, the first block determined by the leftmost occurrence leads to a functional equation and an explicit generating function, whereas the enumeration for
$\class{102}$ involves an inclusion--exclusion argument. These
examples illustrate that the involution condition, although it
restricts the bijections available on arbitrary permutations, often
makes the geometry of a mesh occurrence sufficiently rigid to permit
exact enumeration.

Our bijective arguments exploit this rigidity in a different way.
Conjugation and local adjacent-transposition operations preserve involutions while modifying selected regions of the permutation. The RSK
correspondence, followed by transposition of the recording tableau,
gives an involution-preserving bijection that maps the descent set to its complement and  applies to $\class{046}$ and $\class{052}$. Prefix-rank and local swapping bijections are compatible with inversion and act directly on $\mathcal I_n$.

One of the most delicate nontrivial cases is INV-099, which is closely related to Class~69 of the unrestricted classification~\cite{SuKitaevZhang}. Two of its representatives belong to Class~69, while the other two are complements of the remaining representatives of that class. Although complementation defines a natural bijection on $\mathfrak S_n$, it does not preserve the set of involutions.
We therefore adapt the marked-occurrence
bijection of Zhang and Zhao~\cite{ZhangZhao}. The marked occurrences
form star and path forests, and closing the marked set under reflection
in the main diagonal makes the componentwise star--path transformation
compatible with the symmetry of an involution plot. This yields an
involution-preserving marked-occurrence bijection and completes the
classification.

The rest of the paper is organized as follows. Section~\ref{sec:definitions} introduces the required notation, defines equidistribution, and records the inverse and reverse-complement symmetries. Section~\ref{sec:enumeration} proves thirteen nontrivial classes through pointwise occurrence descriptions and direct enumeration. Section~\ref{sec:involution-plot-symmetry} uses reflection across the main diagonal to prove four further classes. Section~\ref{sec:involution-preserving-bijections} constructs involution-preserving bijections for seven classes. Section~\ref{sec:marked-occurrence-bijection} proves the remaining class \class{099} using a marked-occurrence bijection. Finally, Section~\ref{conclude} summarizes the results and discusses possible extensions, while the \hyperref[app:trivial-classes]{Appendix} lists the trivial equidistribution classes.

\begin{table}[!t]
\centering
\begingroup

\renewcommand{\orbitcell}[1]{\rule[-2.4mm]{0pt}{6.8mm}#1}
\small
\setlength{\tabcolsep}{0.2mm}
\renewcommand{\arraystretch}{1.10}
\setlength{\extrarowheight}{0.8mm}
% [inline block 0: 1 envs, 20357 chars -> data_tex | \begin{tabular}{|>{\centering\arraybackslash}m{10.5mm}|>{\centering\arraybackslash}m{24.5mm}|>{\centering\arraybackslash...]
\begin{tabular}{|>{\centering\arraybackslash}m{10.5mm}|>{\centering\arraybackslash}m{24.5mm}|>{\centering\arraybackslash}m{24.5mm}|>{\centering\arraybackslash}m{15mm}||>{\centering\arraybackslash}m{10.5mm}|>{\centering\arraybackslash}m{24.5mm}|>{\centering\arraybackslash}m{24.5mm}|>{\centering\arraybackslash}m{15mm}|}
\hline
\textbf{nr.} & \textbf{trivial} & \textbf{trivial} & \textbf{ref.} &
\textbf{nr.} & \textbf{trivial} & \textbf{trivial} & \textbf{ref.}\tabularnewline \hline
% left: INV-004 orbits 1-2/2; right: INV-114 orbits 1-2/11
{\scriptsize\texttt{INV-004}} & \orbitcell{\pattern{scale=0.5}{2}{1/1,2/2}{0/0,1/0}\hspace{0.8mm}\pattern{scale=0.5}{2}{1/1,2/2}{0/0,0/1}\hspace{0.8mm}\pattern{scale=0.5}{2}{1/1,2/2}{2/1,2/2}\hspace{0.8mm}\pattern{scale=0.5}{2}{1/1,2/2}{1/2,2/2}} & \orbitcell{\pattern{scale=0.5}{2}{1/1,2/2}{1/0,1/1}\hspace{0.8mm}\pattern{scale=0.5}{2}{1/1,2/2}{0/1,1/1}\hspace{0.8mm}\pattern{scale=0.5}{2}{1/1,2/2}{1/1,2/1}\hspace{0.8mm}\pattern{scale=0.5}{2}{1/1,2/2}{1/1,1/2}} & {\centering\small \shortstack{Thm~\ref{thm:004}}} & \multirow[c]{6}{*}{\scriptsize\texttt{INV-114}} & \orbitcell{\pattern{scale=0.5}{2}{1/1,2/2}{0/0,1/0,2/0,0/1,1/1,2/1,0/2,1/2}\hspace{0.8mm}\pattern{scale=0.5}{2}{1/1,2/2}{1/0,2/0,0/1,1/1,2/1,0/2,1/2,2/2}} & \orbitcell{\pattern{scale=0.5}{2}{1/1,2/2}{0/0,1/0,2/0,0/1,1/1,2/1,2/2}\hspace{0.8mm}\pattern{scale=0.5}{2}{1/1,2/2}{0/0,1/0,2/0,1/1,2/1,1/2,2/2}\hspace{0.8mm}\pattern{scale=0.5}{2}{1/1,2/2}{0/0,1/0,0/1,1/1,0/2,1/2,2/2}\hspace{0.8mm}\pattern{scale=0.5}{2}{1/1,2/2}{0/0,0/1,1/1,2/1,0/2,1/2,2/2}} & \multirow[c]{6}{=}{\centering\small \shortstack{Thm~\ref{thm:114}}} \\
\end{tabular}
\endgroup
\caption{All patterns in the twenty-five nontrivial involution classes. Each pattern column contains one  $\langle i,rc\rangle$-orbit.}
\label{tab:pattern-guide}
\end{table}

\section{Preliminaries}
\label{sec:definitions}

Write $[n]=\{1,\ldots,n\}$, and let $\Sn$ be the symmetric group on
$[n]$. Permutations are written in one-line notation, so
$\pi=\pi_1\cdots\pi_n$, where $\pi_i=\pi(i)$. Permutation products are
composed from right to left. For $g,\pi\in\Sn$, the permutation
$g\pi g^{-1}$ is the conjugate of $\pi$ by $g$; when $g^2=\mathrm{id}$,
this is simply $g\pi g$.

Let
$$\In=\{\pi\in\Sn:\pi=\pi^{-1}\},
\qquad
I_n=|\In|
=\sum_{k=0}^{\lfloor n/2\rfloor}
\frac{n!}{2^k k!(n-2k)!},$$
where the latter formula is classical; see, for example, \cite{StanleyEC2}. Thus $\In$ is the set of involutions in $\Sn$, whereas $I_n$ is its
cardinality. For $a\in[n]$, let
$\pos_\pi(a)=\pi^{-1}(a)$
denote the position of the value $a$ in $\pi$. In particular, if
$\pi\in\In$, then
$\pos_\pi(a)=\pi_a.$

We use the ordinary generating functions
$$
  \mathcal P(x)=\sum_{n\geq0}n!x^n,
  \qquad
\mathcal I(x)=\sum_{n\geq0}I_nx^n.
$$

The descent set and descent number of $\pi$ are
\[
\Des(\pi)=\{i\in[n-1]:\pi_i>\pi_{i+1}\},
\qquad
\operatorname{des}(\pi)=|\Des(\pi)|,
\]
and
$\operatorname{asc}(\pi)=n-1-\operatorname{des}(\pi),$ for $n\ge 1$. For $n=0$, both statistics are zero.
We write
\begin{align*}
\lmin(\pi)
&=\#\{j:\pi_j<\pi_i\text{ for every }i<j\},\\
\rmin(\pi)
&=\#\{j:\pi_j<\pi_i\text{ for every }i>j\},\\
\rmax(\pi)
&=\#\{j:\pi_j>\pi_i\text{ for every }i>j\}
\end{align*}
for the numbers of left-to-right minima, right-to-left minima, and
right-to-left maxima of $\pi$, respectively. The corresponding elements
are called left-to-right minima, right-to-left minima, and
right-to-left maxima. Left-to-right maxima are defined analogously.
For a statement $A$, let $\mathbf 1_{\{A\}}$ be $1$ when $A$ holds and
$0$ otherwise.

For
$\pi\in\Sn$, let $\Occ_p(\pi)$ be the set of occurrences of $p$ in
$\pi$, and put $\occ_p(\pi)=|\Occ_p(\pi)|$.
Define the bivariate generating function for the distribution of $\occ_p$ over involutions by
$$
F^{\mathcal I}_p(x,q)
=
\sum_{n\geq0}x^n
\sum_{\pi\in\mathcal I_n}q^{\occ_p(\pi)}.
$$
Two patterns $p$ and $p'$ are \emph{equidistributed} on involutions,
written
$p\sim_{\mathcal I}p',$
if
$F^{\mathcal I}_p(x,q)=F^{\mathcal I}_{p'}(x,q).$
Once a class $\class{c}$ has been proved to be
equidistributed, its common series is denoted by
$F^{\mathcal I}_{\class{c}}(x,q).$

\begin{lemma}
\label{lem:involution-symmetries}
Let
$  w_0=n(n-1)\cdots1,
  \pi^{rc}=w_0\pi w_0.$
For a mesh pattern $p=(\tau,R)$ of length $m$, define
\[
\begin{aligned}
  p^{-1}
  &=
  \bigl(\tau^{-1},R^{-1}\bigr),
  & R^{-1}
  &=
  \{(y,x):(x,y)\in R\},\\
  p^{rc}
  &=
  \bigl(\tau^{rc},R^{rc}\bigr),
  & R^{rc}
  &=
  \{(m-x,m-y):(x,y)\in R\},
\end{aligned}
\]
where $\tau^{rc}(i)=m+1-\tau(m+1-i).$
Also set
  $p^{rci}:=(p^{rc})^{-1}.$
Then, for every $n$,
\[
  p\sim_{\mathcal I} p^{-1}\sim_{\mathcal I} p^{rc}
  \sim_{\mathcal I} p^{rci}.
\]
\end{lemma}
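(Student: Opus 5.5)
The plan is to exhibit, for each of the two symmetries, a bijection $\phi$ of $\Sn$ that maps $\In$ onto itself and satisfies $\occ_{p}(\pi)=\occ_{\phi(p)}(\phi(\pi))$ for every mesh pattern $p$. Summing $q^{\occ}$ over $\In$ then gives $F^{\mathcal I}_p=F^{\mathcal I}_{\phi(p)}$. The two maps are $\pi\mapsto\pi^{-1}$ and $\pi\mapsto\pi^{rc}=w_0\pi w_0$. The relations $p\sim_{\mathcal I}p^{-1}$ and $p\sim_{\mathcal I}p^{rc}$ come directly. Applying the first to $p^{rc}$ gives $p^{rc}\sim_{\mathcal I}p^{rci}$, and transitivity of equality of series finishes the chain.

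\textbf{Step 1: both maps preserve $\In$.}
\begin{itemize}
\item Inversion fixes every involution, so it is the identity on $\In$.
\item For reverse-complement, $w_0^2=\mathrm{id}$ gives $(w_0\pi w_0)^{-1}=w_0\pi^{-1}w_0=w_0\pi w_0$ whenever $\pi=\pi^{-1}$. Since $rc$ is itself an involution on $\Sn$, it is a bijection of $\In$.
\end{itemize}

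\textbf{Step 2: occurrences are carried along.} This is the standard geometric fact about mesh patterns.
\begin{itemize}
\item The plot of $\pi^{-1}$ is the reflection of the plot of $\pi$ across the main diagonal, $(i,\pi_i)\mapsto(\pi_i,i)$. The plot of $\pi^{rc}$ is its rotation by $180^\circ$, $(i,\pi_i)\mapsto(n+1-i,n+1-\pi_i)$.
\item Take an occurrence of $\tau$ at positions $i_1<\dots<i_m$. For inversion, its image is an occurrence of $\tau^{-1}$ in $\pi^{-1}$, at positions given by the sorted values $\pi_{i_j}$. For $rc$, its image is an occurrence of $\tau^{rc}$.
\item Each region of the induced grid, cell $(x,y)$, is an open rectangle bounded by consecutive chosen positions and values. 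Reflection sends it to the rectangle labelled $(y,x)$, and rotation sends it to the rectangle labelled $(m-x,m-y)$.
\item Both transformations are bijections of the plane that send points of $\pi$ to points of the image permutation. So a shaded region is empty for $\pi$ if and only if the corresponding region of $R^{-1}$ or $R^{rc}$ is empty in the image.
\end{itemize}
This gives a bijection $\Occ_p(\pi)\to\Occ_{p^{-1}}(\pi^{-1})$ and likewise $\Occ_p(\pi)\to\Occ_{p^{rc}}(\pi^{rc})$.

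\textbf{Main point of care.} The only real work is the index bookkeeping for cell labels under rotation. The grid lines $0,\dots,m$ must be matched with their reversals: the cell between lines $x$ and $x+1$ maps to the cell between $m-x-1$ and $m-x$, which is labelled $m-x$ under the paper's convention. This should be checked once, and the same check confirms the formula $\tau^{rc}(i)=m+1-\tau(m+1-i)$. Once this is verified, the lemma follows for every $n$ by summing over $\In$.
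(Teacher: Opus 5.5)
Your proposal is correct and follows the paper's proof. Diagonal reflection gives $\occ_p(\pi)=\occ_{p^{-1}}(\pi^{-1})=\occ_{p^{-1}}(\pi)$ on $\In$, the $180^\circ$ rotation gives $\occ_p(\pi)=\occ_{p^{rc}}(\pi^{rc})$ with $\pi\mapsto\pi^{rc}$ a bijection of $\In$, and applying the inverse symmetry to $p^{rc}$ closes the chain.

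The only blemish is an off-by-one in your bookkeeping remark. Index the boundary lines $0,\dots,m+1$ (with $i_0=0$, $i_{m+1}=n+1$) and reverse them by $j\mapsto m+1-j$. Then the cell between lines $x$ and $x+1$ goes to the cell between lines $m-x$ and $m+1-x$, which is cell $m-x$. Your final label $m-x$ is therefore right, even though the intermediate lines you wrote, $m-x-1$ and $m-x$, are not.
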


\begin{proof}
Reflection in the main diagonal sends the plot of $\pi$ to that of
$\pi^{-1}$ and transforms every $p$-occurrence into a
$p^{-1}$-occurrence. Hence
  $\operatorname{occ}_{p}(\pi)
  =
  \operatorname{occ}_{p^{-1}}(\pi^{-1}).$
If $\pi\in\mathcal I_n$, then $\pi^{-1}=\pi$, so
  $\operatorname{occ}_{p}(\pi)
  =
  \operatorname{occ}_{p^{-1}}(\pi).$
Thus $p\sim_{\mathcal I}p^{-1}$.

Next, reverse-complement preserves involutions, since
\[
\begin{aligned}
  (\pi^{rc})^{-1}
  &=(w_0\pi w_0)^{-1}
  &=w_0\pi^{-1}w_0
  &=w_0\pi w_0
   =\pi^{rc}.
\end{aligned}
\]
Moreover, $\pi\mapsto\pi^{rc}$ is a bijection of $\mathcal I_n$.
Geometrically, it rotates the permutation plot through $180^\circ$,
and therefore
  $\operatorname{occ}_{p}(\pi)
  =
  \operatorname{occ}_{p^{rc}}(\pi^{rc}).$
Hence $p\sim_{\mathcal I}p^{rc}$. Finally, applying the inverse symmetry to
$p^{rc}$ gives
  $p^{rc}\sim_{\mathcal I}(p^{rc})^{-1}=p^{rci},$
which completes the proof.
\end{proof}

We call the orbit of a mesh pattern under the inverse and reverse-complement operations its \emph{symmetry orbit}. Thus the symmetry orbit of $p$ is
$
\{p,p^{-1},p^{rc},p^{rci}\},
$
with repetitions removed. By Lemma~\ref{lem:involution-symmetries}, every symmetry orbit is contained in a single equidistribution class. An equidistribution class is called \emph{trivial} if it consists of a single symmetry orbit, and \emph{nontrivial} otherwise.

For $\alpha\in\mathfrak S_r$ and $\beta\in\mathfrak S_s$, their
\emph{skew sum} $\alpha\ominus\beta$ is obtained by placing a copy
of $\alpha$ above and to the left of a copy of $\beta$. A nonempty
permutation is \emph{skew indecomposable} if it cannot be expressed
as the skew sum of two nonempty permutations. Every nonempty
permutation has a unique canonical skew decomposition into nonempty
skew indecomposable components.

\begin{lemma}
\label{lem:paired-skew-decomposition}
Every involution has a unique representation
$$
  \pi=
  \alpha_1\ominus\cdots\ominus\alpha_m
  \ominus\gamma
  \ominus\alpha_m^{-1}\ominus\cdots
  \ominus\alpha_1^{-1},
$$
where each $\alpha_i$ is a nonempty skew indecomposable permutation,
and $\gamma$ is either empty or a skew indecomposable involution. We
call $(\alpha_i,\alpha_i^{-1})$ an \emph{inverse pair} and $\gamma$
the \emph{center}.
\end{lemma}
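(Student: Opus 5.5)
The plan is to start from the canonical skew decomposition $\pi=\beta_1\ominus\cdots\ominus\beta_k$ into skew indecomposable components and to show that inversion reverses this decomposition. Reflection of the plot in the main diagonal sends a skew sum $\alpha\ominus\beta$ to $\beta^{-1}\ominus\alpha^{-1}$: the copy of $\alpha$ occupies the top-left block, and after reflection it lies in the bottom-right block. More precisely, $(\alpha\ominus\beta)^{-1}=\beta^{-1}\ominus\alpha^{-1}$ can be checked directly from the definition. The first step is to note that $\sigma$ is skew indecomposable if and only if $\sigma^{-1}$ is, since any skew splitting of $\sigma^{-1}$ inverts to one of $\sigma$. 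Consequently
$$\pi^{-1}=\beta_k^{-1}\ominus\cdots\ominus\beta_1^{-1}$$
is again a decomposition into skew indecomposable components. By uniqueness of the canonical skew decomposition, $\pi=\pi^{-1}$ then forces $\beta_j=\beta_{k+1-j}^{-1}$ for all $j$, including the equality of their sizes.

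The second step treats existence by parity. If $k=2m$ is even, set $\alpha_i=\beta_i$ for $i\le m$ and take $\gamma$ empty; the relation $\beta_{k+1-i}=\beta_i^{-1}$ gives the stated form. If $k=2m+1$ is odd, set $\gamma=\beta_{m+1}$. This component satisfies $\gamma=\gamma^{-1}$, so it is a skew indecomposable involution. Its values form the block of consecutive values matching its positions, because of the symmetric placement in the skew sum, so the pattern it standardizes to is an involution.

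Uniqueness follows by reading off components. Given any representation of the stated form, the listed pieces $\alpha_1,\dots,\alpha_m,\gamma,\alpha_m^{-1},\dots,\alpha_1^{-1}$ (omitting $\gamma$ if empty) are skew indecomposable, since inverses of skew indecomposables are skew indecomposable. So this is the canonical skew decomposition, and by uniqueness of that decomposition the number of components $k$ determines $m$ and whether $\gamma$ is empty, through the parity of $k$. The components themselves then determine the $\alpha_i$ and $\gamma$.

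The only delicate point is the identity $(\alpha\ominus\beta)^{-1}=\beta^{-1}\ominus\alpha^{-1}$, together with checking that the middle component is an involution after standardization. Both reduce to writing $\alpha\ominus\beta$ explicitly as $\alpha_i+|\beta|$ for $i\le|\alpha|$ and $\beta_{i-|\alpha|}$ otherwise, and inverting.
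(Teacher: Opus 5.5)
Your proof is correct and follows essentially the same route as the paper. You take the canonical skew decomposition, use $(\alpha\ominus\beta)^{-1}=\beta^{-1}\ominus\alpha^{-1}$ and uniqueness to get $\beta_j=\beta_{k+1-j}^{-1}$, and pair components from the two ends, with the middle component an involution when $k$ is odd. You also make explicit two points the paper leaves implicit: that inversion preserves skew indecomposability, and that the paired representation is itself unique. Your remark that the middle block occupies matching positions and values is harmless but unnecessary, since $\gamma=\gamma^{-1}$ already follows from $\beta_{m+1}=\beta_{m+1}^{-1}$.
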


\begin{proof}
Let $\pi=\beta_1\ominus\cdots\ominus\beta_k$ be the canonical skew
decomposition. Since inversion reverses a skew sum,
$\pi^{-1}=\beta_k^{-1}\ominus\cdots\ominus\beta_1^{-1}$. If
$\pi=\pi^{-1}$, uniqueness of the decomposition gives
$\beta_i=\beta_{k+1-i}^{-1}$ for every $i$. Pairing components
equidistant from the two ends proves the result; when $k$ is odd,
the middle component is itself an involution.
\end{proof}

Consequently, if $A(x)$ counts
one inverse pair and $C(x)$ counts the possible center, then the
corresponding involutions are generated by $C(x)/(1-A(x))$.

\begin{theorem}\label{thm:classification}
Length‑two mesh patterns fall into precisely $287$ equidistribution classes, of which $262$ are trivial and $25$ are nontrivial.
\end{theorem}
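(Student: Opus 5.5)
The proof splits into an upper bound, obtained by computing distributions, and a matching lower bound, obtained from explicit equalities. First we enumerate all $2^{9}\cdot 2=1024$ length-two mesh patterns: the underlying patterns $12$ and $21$, each with any subset of the nine cells $\{0,1,2\}^2$. We then compute their symmetry orbits under the group generated by inverse and reverse-complement. By Lemma~\ref{lem:involution-symmetries}, each orbit lies in a single equidistribution class, so it suffices to decide which orbits merge.

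For the upper bound on merging, we compute by machine the distributions $\sum_{\pi\in\mathcal I_n}q^{\occ_p(\pi)}$ for all orbits and all $n$ up to some modest bound, say $n\le 10$ or $12$. Orbits whose polynomials differ for some $n$ lie in different classes. Grouping the orbits by their vector of polynomials then gives at least $287$ candidate classes. We will check that exactly $262$ of these groups are single orbits and $25$ groups contain several orbits. Each single-orbit group is automatically a trivial equidistribution class, since its patterns are already separated from all others. These are recorded in the Appendix.

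The remaining work is the lower bound: we must prove that every orbit within each of the $25$ candidate groups is genuinely equidistributed for all $n$. We handle these groups case by case, following the organization announced in the introduction.
\begin{itemize}
\item Thirteen classes are settled by pointwise descriptions of occurrences and direct enumeration. Here we derive, for each representative, the same recurrence or generating function. The tools are cycle-of-largest-element recurrences, the paired skew decomposition of Lemma~\ref{lem:paired-skew-decomposition} giving series of the form $C(x)/(1-A(x))$, direct-sum decompositions, and labeled Motzkin-path continued fractions.
\item Four classes follow from reflecting across the main diagonal, which converts one shading condition into its transpose on involutions.
\item Seven classes are handled by explicit involution-preserving bijections: conjugation, local adjacent transpositions, RSK with transposed recording tableau, and prefix-rank swaps.
\end{itemize}

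The main obstacle is the last class, $\class{099}$. Complementation relates its representatives on $\Sn$ but does not preserve $\In$. For this class we adapt the Zhang--Zhao marked-occurrence bijection. We would first take the marked set and close it under diagonal reflection. We then check that the star/path forest components are either self-symmetric or come in mirror pairs. Finally, we apply the star--path transformation equivariantly, so that the image is again an involution and the occurrence count is transferred. Once all $25$ groups are verified, the count $262+25=287$ follows.
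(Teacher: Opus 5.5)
Your proposal is correct and follows the paper's proof. The trivial classes come from Lemma~\ref{lem:involution-symmetries}, and the $25$ nontrivial classes come from the case-by-case theorems: thirteen enumerative, four via diagonal symmetry, seven via involution-preserving bijections, and the symmetrized marked-occurrence bijection for \class{099}. Distinctness of the $287$ classes is certified by machine computation of the distribution polynomials; the paper reports that $n\le 7$ already separates the representatives pairwise, so your range of $n\le 10$ or $12$ is more than enough.
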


\begin{proof}
    To establish that there are exactly $287$ equidistribution classes, algebraic and bijective arguments show that the patterns within each class are equidistributed, and that these classes together
cover all $1024$ patterns. The distinctness of the classes is then verified by computing their distribution polynomials for $n\le 7$, which distinguish the $287$ representatives pairwise.
\end{proof}

For every distribution class considered below, we choose one
representative from each symmetry orbit contained in that
class. By Lemma~\ref{lem:involution-symmetries}, it is enough to prove that the displayed
representatives are equidistributed. We use this reduction
throughout without further mention.

\section{Equidistributions via enumeration}
\label{sec:enumeration}

The classes in this section are determined by explicit occurrence conditions.
Their distributions follow by counting the involutions satisfying those
conditions.

\begin{lemma}\label{lem:R-recurrence}
For $n\geq0$, define
$$
R_n(q)
=
\sum_{\pi\in\mathcal I_n}q^{\lmin(\pi)}
=
\sum_{\pi\in\mathcal I_n}q^{\rmax(\pi)},
\qquad
R(x,q)=\sum_{n\geq0}R_n(q)x^n.
$$
Then
$
R_0(q)=1, R_1(q)=q,
$
and
\begin{equation}\label{eq:R-recurrence}
R_{n+1}(q)
=
R_n(q)
+\bigl(n-1+q^2\bigr)R_{n-1}(q),
\qquad n\geq1.
\end{equation}
\end{lemma}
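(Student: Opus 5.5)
Two things need proof: the identity $\sum_{\pi\in\In}q^{\lmin(\pi)}=\sum_{\pi\in\In}q^{\rmax(\pi)}$, and the recurrence \eqref{eq:R-recurrence}.

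\textbf{The equality of the two sums.} We use the reverse-complement map $\pi\mapsto\pi^{rc}$. By Lemma~\ref{lem:involution-symmetries} it is a bijection of $\In$. Rotating the plot through $180^\circ$ sends a left-to-right minimum at position $j$ to a right-to-left maximum at position $n+1-j$. Hence $\lmin(\pi)=\rmax(\pi^{rc})$, and the two sums agree. The initial values $R_0=1$ and $R_1=q$ are immediate. From now on we work with $\lmin$.

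\textbf{The recurrence.} We classify $\pi\in\mathcal I_{n+1}$ by the cycle containing the value $n+1$, i.e.\ by $\pi_{n+1}$.

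\emph{Case 1: $\pi_{n+1}=n+1$.} Then $\pi=\sigma\oplus 1$ with $\sigma\in\In$. The last entry $n+1$ exceeds everything to its left, so it is not a left-to-right minimum. The left-to-right minima of $\pi$ are exactly those of $\sigma$. This case contributes $R_n(q)$.

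\emph{Case 2: $\pi_{n+1}=k$ for some $k\le n$.} Then $\pi_k=n+1$. Delete positions $k,n+1$ and values $k,n+1$, and standardize; the result is some $\sigma\in\mathcal I_{n-1}$. There are $n$ choices of $k$, and we must track how $\lmin$ changes.

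\emph{Subcase $k=1$.} Here $\pi_1=n+1$, a left-to-right minimum. The remaining entries in positions $2,\dots,n$ are $\sigma$ shifted up by one, with values in $\{2,\dots,n\}$. All of them are below $n+1$, so their left-to-right minima are exactly those of $\sigma$. The final entry $\pi_{n+1}=1$ is smaller than everything, so it is a new minimum. Thus $\lmin(\pi)=\lmin(\sigma)+2$, contributing $q^2R_{n-1}(q)$.

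\emph{Subcase $2\le k\le n$.} The entry $\pi_k=n+1$ is never a left-to-right minimum, since $\pi_1<n+1$ precedes it. Inserting the value $n+1$ therefore changes nothing among the other positions. The value $k$ sits last, at position $n+1$. It is a left-to-right minimum iff all of $1,\dots,k-1$ occur before it, which always holds because it is in the final position. So the question is whether the value $k$ adds a minimum, and that depends on $\sigma$: if $\sigma$ has a value below $k$ then it does not, but for $k\ge2$ the value $1$ always occurs somewhere before position $n+1$.

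More carefully: value $1$ lies at some position $\pi_1<n+1$, because the value $1$ is not at position $n+1$ (as $k\ge2$). So the value $k$ is preceded by the smaller value $1$ and is not a left-to-right minimum. Moreover, inserting the larger value $n+1$ at position $k$ and the value $k$ at the end does not change the relative order of the other entries. Therefore the left-to-right minima of $\pi$ correspond bijectively to those of $\sigma$, with each entry shifted in value by at most one, which preserves comparisons. This gives $\lmin(\pi)=\lmin(\sigma)$, and the $n-1$ choices of $k$ contribute $(n-1)R_{n-1}(q)$.

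Summing the three contributions yields \eqref{eq:R-recurrence}.

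\textbf{Main obstacle.} The delicate step is Subcase $2\le k\le n$. One must check that removing the pair $\{k,n+1\}$ and standardizing preserves the left-to-right-minimum status of every other entry. The key facts are that value $n+1$ is never a left-to-right minimum once it is not first, and that the value $k$ is preceded by value $1$. Standardization is order-preserving, so the comparisons among the remaining entries are unchanged.
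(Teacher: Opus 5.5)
Your proof is correct and follows the paper's own argument: reverse-complement for the equality of the two sums, then a case split on the cycle containing $n+1$ (fixed point; paired with $1$, giving $q^2R_{n-1}(q)$; paired with some $2\le k\le n$, giving $(n-1)R_{n-1}(q)$), with the transposition $(k,n+1)$ deleted and the rest standardized. One small cleanup: the sentence saying the final value $k$ is a left-to-right minimum ``iff all of $1,\dots,k-1$ occur before it'' states the condition backwards and should be deleted, since your subsequent ``more carefully'' argument, that the value $1$ precedes it, is the correct one.
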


\begin{proof}
Since reverse-complement is a bijection on $\mathcal I_n$ and
interchanges left-to-right minima with right-to-left maxima,
$$
\sum_{\pi\in\mathcal I_n}q^{\lmin(\pi)}
=
\sum_{\pi\in\mathcal I_n}q^{\rmax(\pi)}.
$$

It remains to prove the recurrence. Consider the cycle containing
$n+1$ in an involution of $\mathcal I_{n+1}$. If $n+1$ is fixed,
deleting it preserves all left-to-right minima and contributes
$R_n(q)$.
Otherwise, suppose that $n+1$ is paired with $k\in[n]$. Deleting
the transposition $(k,n+1)$ and standardizing gives an involution
in $\mathcal I_{n-1}$. If $k=1$, the two inserted elements $n+1$
and $1$ are new left-to-right minima, while all existing
left-to-right minima are preserved. This case therefore contributes
$q^2R_{n-1}(q)$. For each of the remaining $n-1$ choices
$2\leq k\leq n$, no new left-to-right minimum is created, giving
$(n-1)R_{n-1}(q)$.

Combining these three cases yields Equation~\eqref{eq:R-recurrence}, as desired.
\end{proof}

\begin{theorem}\label{thm:004}
For every $n\geq0$, all patterns in $\class{004}$ are equidistributed
on $\mathcal I_n$. Moreover,
\[
F^{\mathcal I}_{\class{004}}(x,q)
=
R(qx,q^{-1}),
\]
where $R(x,q)$ is defined in Lemma~\ref{lem:R-recurrence}.
\end{theorem}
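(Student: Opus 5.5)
The plan is to show that for every representative $p$ of $\class{004}$ and every $\pi\in\In$ we have $\occ_p(\pi)=n-\lmin(\pi)$. Then
\[
\sum_{\pi\in\In}q^{\occ_p(\pi)}=q^nR_n(q^{-1}),
\]
and summing against $x^n$ gives $F^{\mathcal I}_p(x,q)=\sum_n (qx)^nR_n(q^{-1})=R(qx,q^{-1})$. Both the equidistribution and the formula follow at once. By Lemma~\ref{lem:involution-symmetries} it suffices to treat one representative from each of the two symmetry orbits. I take $p_1=(12,\{(0,0),(1,0)\})$ and $p_2=(12,\{(1,0),(1,1)\})$.

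For $p_1$, take an occurrence at positions $i<j$ with $\pi_i<\pi_j$. Cell $(0,0)$ forbids points to the left of $i$ below $\pi_i$. Cell $(1,0)$ forbids points strictly between $i$ and $j$ below $\pi_i$. Together they say exactly that $\pi_i=\min\{\pi_1,\dots,\pi_{j-1}\}$. Since we also need $\pi_i<\pi_j$, an occurrence ending at $j$ exists iff $\pi_j$ is not a left-to-right minimum, and in that case it is unique: $i$ is the position of the prefix minimum. Hence $\occ_{p_1}(\pi)=n-\lmin(\pi)$.

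For $p_2$, the cells $(1,0)$ and $(1,1)$ form the whole vertical strip between positions $i$ and $j$ below height $\pi_j$. So an occurrence means that $\pi_i<\pi_j$ and every $\pi_k$ with $i<k<j$ exceeds $\pi_j$. Given $j$, this forces $i$ to be the nearest position to the left of $j$ with value smaller than $\pi_j$. Such an $i$ exists iff $\pi_j$ is not a left-to-right minimum, and conversely that nearest $i$ always gives an occurrence. So again $\occ_{p_2}(\pi)=n-\lmin(\pi)$.

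These counts hold for all permutations, not only involutions; restricting to $\In$ and invoking Lemma~\ref{lem:R-recurrence} for the generating function $R$ finishes the proof. The only delicate point is reading the shaded cells correctly. For example, confusing cell $(1,1)$ with the region above $\pi_j$ would turn $p_2$ into the ascent statistic. That statistic is genuinely different: on $\mathcal I_3$ the ascent count has distribution $q^2+2q+1$, whereas $n-\lmin$ has distribution $2q^2+q+1$. I would therefore sanity-check both orbits on $n\le3$ before writing up.
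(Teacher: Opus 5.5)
Your proposal is correct and follows the paper's proof essentially step for step. It uses the same two orbit representatives, the same pointwise identity $\occ_p(\pi)=n-\lmin(\pi)$ (each non-left-to-right-minimum $b$ determines a unique partner $a$), and the same summation $\sum_{n}(qx)^nR_n(q^{-1})=R(qx,q^{-1})$; the $n=0$ term, which the paper states explicitly, is covered by your identity since $R_0(q)=1$.
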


\begin{proof}
For the representatives
$$
p=\pattern{scale=0.6}{2}{1/1,2/2}{0/0,1/0}
\qquad\text{and}\qquad
p'=\pattern{scale=0.6}{2}{1/1,2/2}{1/0,1/1},
$$
let $a<b$ be the selected values of an occurrence. Every
element $b$ that is not a left-to-right minimum determines a unique
occurrence: for $p$, the shading forces $a$ to be the smallest
element to the left of $b$, while for $p'$ it forces $a$ to be the
rightmost element to the left of $b$ that is smaller than $b$.
Hence, for every $\pi\in\In$,
$\occ_p(\pi)=\occ_{p'}(\pi)=n-\lmin(\pi).$

Therefore, for $n\geq1$,
$$[x^n]F^{\mathcal I}_{\class{004}}(x,q)
=
\sum_{\pi\in\mathcal I_n}q^{\,n-\lmin(\pi)}
=
q^nR_n(q^{-1}).$$
The same identity holds for $n=0$, since $R_0(q)=1$. Summing over
$n\geq0$ gives
\[
F^{\mathcal I}_{\class{004}}(x,q)
=
\sum_{n\geq0}R_n(q^{-1})(qx)^n
=
R(qx,q^{-1}),
\]
as required.
\end{proof}

% For the recurrence, put
% $
% A_n(q)
% =
% [x^n]F^{\mathcal I}_{\class{004}}(x,q)
% =
% \sum_{\pi\in\mathcal I_n}q^{\,n-\lmin(\pi)}.
% $
% Clearly, $A_0(q)=A_1(q)=1$. Consider the cycle containing $n+1$.
% If $n+1$ is fixed, deleting it leaves the number of left-to-right
% minima unchanged and contributes $qA_n(q)$. If
% $n+1$ is paired with $k\in[n]$, deleting the transposition
% $(k,n+1)$ and standardizing gives an involution in
% $\mathcal I_{n-1}$. For $k=1$, the two inserted elements are new
% left-to-right minima, so the statistic $n-\lmin$ is unchanged; this
% contributes $A_{n-1}(q)$. For each of the $n-1$ choices
% $2\leq k\leq n$, no new left-to-right minimum is created, and the
% $n-\lmin$ increases by $2$, contributing
% $q^2A_{n-1}(q)$. This completes the proof of the recurrence.

% ===================
\begin{theorem}\label{thm:033}
For every $n\geq0$, all patterns in $\class{033}$ are equidistributed
on $\mathcal I_n$. Moreover,
\begin{equation}\label{eq:033-continued-fraction}
F^{\mathcal I}_{\class{033}}(x,q)
=
\frac{q-1}{q}
+
\cfrac{1/q}{
1-qx-\cfrac{qx^2}{
1-x-\cfrac{(q+1)x^2}{
1-x-\cfrac{(q+2)x^2}{
1-x-\ddots}}}}.
\end{equation}
\end{theorem}

% ================

\begin{proof}
For the representatives
$$
\begin{aligned}
p_1&=\pattern{scale=0.6}{2}{1/1,2/2}{0/0,1/0,2/0,2/1},
&
p_2&=\pattern{scale=0.6}{2}{1/1,2/2}{0/0,2/0,0/1,2/1},
&
p_3&=\pattern{scale=0.6}{2}{1/1,2/2}{1/0,2/0,1/1,2/1},
&
p_4&=\pattern{scale=0.6}{2}{1/1,2/2}{2/0,0/1,1/1,2/1},
\end{aligned}
$$
let $a<b$ be the selected values of an occurrence. For each
pattern, the shading forces $b$ to be a right-to-left minimum, and
necessarily $b\neq1$. Conversely, every right-to-left minimum
$b>1$ determines a unique occurrence: $p_1$ requires $a=1$;
$p_2$ and $p_3$ require $a$ to be the leftmost and rightmost element
smaller than $b$, respectively; and $p_4$ requires $a=b-1$.

Thus, for every $n\geq 1$ and $\pi\in\mathcal I_n$,
$
\occ_{p_1}(\pi)=\occ_{p_2}(\pi)=\occ_{p_3}(\pi)=\occ_{p_4}(\pi)=\rmin(\pi)-1.
$
Consequently,
\[
1+q\bigl(F^{\mathcal I}_{\class{033}}(x,q)-1\bigr)
=
1+\sum_{n\geq1}x^n
\sum_{\pi\in\In}q^{\rmin(\pi)}.
\]
By reverse-complement, the distribution of $\rmin$ on $\In$ is the
same as that of left-to-right maxima. Barnabei et
al.~\cite[Theorem~3.5]{BarnabeiMaxima} obtained the corresponding
continued fraction. Solving for
$F^{\mathcal I}_{\class{033}}(x,q)$ gives
\eqref{eq:033-continued-fraction}.
\end{proof}

% \begin{theorem}\label{thm:041}
% All patterns in $\class{041}$ are equidistributed.
% Moreover,
% \[
% [x^n]F^{\mathcal I}_{\class{041}}(x,q)
% =\sum_{\pi\in\In}
% q^{\,\#\{b:\,b\text{ is a right-to-left maximum having a
% smaller element to its left}\}}.
% \]
% \end{theorem}

% \begin{proof}
% For the representatives
% \[
% p=\pattern{scale=0.6}{2}{1/1,2/2}{0/0,1/0,2/2}
% \qquad\text{and}\qquad
% p'=\pattern{scale=0.6}{2}{1/1,2/2}{0/1,1/1,2/2},
% \]
% in any occurrence $a < b$, the shading restricts $b$ to be a right-to-left maximum having a smaller element to its left.
% Once such a $b$ is fixed, $p$ requires $a$ to be the smallest element to the left of $b$, while $p'$ requires $a$ to be the largest element to the left of $b$ that is smaller than $b$. 

% Since $b$ is guaranteed to have a smaller element to its left, $a$ is well-defined and unique in both cases. Thus, the number of occurrences for each pattern exactly equals the number of such elements $b$. The stated identities follow.
% \end{proof}

\begin{theorem}\label{thm:041}
For every $n\geq0$, all patterns in $\class{041}$ are equidistributed
on $\mathcal I_n$. Moreover,
\begin{equation}\label{eq:041-distribution}
F^{\mathcal I}_{\class{041}}(x,q)
=
\frac{R(x,q)+(1-q)xP(x,q)}
     {1-(1-q^2)x^2P(x,q)},
\end{equation}
where $R(x,q)$ is defined in Lemma~\ref{lem:R-recurrence} and
\[
P(x,q)
=
\sum_{m\geq0}(q^2)^{\overline m}x^{2m}.
\]
Here $(a)^{\overline m}=a(a+1)\cdots(a+m-1)$, with
$(a)^{\overline0}=1$.
\end{theorem}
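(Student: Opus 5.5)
The plan is to show that for every representative of $\class{041}$ the number of occurrences equals the same statistic: the number of right-to-left maxima $b$ that have a smaller element somewhere to their left. The formula then follows from this statistic's behaviour under the paired skew decomposition of Lemma~\ref{lem:paired-skew-decomposition}.

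\textbf{Step 1: pointwise description.} Take the representatives
$p=\pattern{scale=0.5}{2}{1/1,2/2}{0/0,1/0,2/2}$ and $p'=\pattern{scale=0.5}{2}{1/1,2/2}{0/1,1/1,2/2}$, as in the earlier draft, and write $a<b$ for the selected values.

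The shaded cell $(2,2)$ forces $b$ to be a right-to-left maximum. The existence of $a$ forces $b$ to have a smaller element to its left. Conversely, such a $b$ determines $a$ uniquely:
\begin{itemize}
\item in $p$, $a$ is the minimum of the prefix before $b$;
\item in $p'$, $a$ is the largest element smaller than $b$ to the left of $b$.
\end{itemize}
Hence $\occ_p(\pi)=\occ_{p'}(\pi)=s(\pi)$, where $s(\pi)$ counts right-to-left maxima with a smaller element to their left. This proves the equidistribution.

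\textbf{Step 2: locality of $s$ on skew components.} Let $\pi=\beta_1\ominus\cdots\ominus\beta_k$ be the canonical skew decomposition.

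Right-to-left maxima of $\pi$ are exactly the union of the right-to-left maxima of the components, since later components lie entirely below earlier ones.

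Next, an element of $\beta_i$ has a smaller element to its left in $\pi$ only if it has one inside $\beta_i$, because earlier components lie entirely above it.

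Finally, in a skew indecomposable $\beta$ of size at least $2$, every right-to-left maximum has a smaller element to its left. Suppose instead that a right-to-left maximum $b$ at position $j$ had all earlier entries larger. If $j>1$, then $\beta$ splits as a skew sum after position $j-1$. If $j=1$, then $b$ is the maximum in the first position and $\beta$ splits after position $1$. Either way this contradicts indecomposability.

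Therefore
\[
s(\pi)=\sum_{|\beta_i|\ge2}\rmax(\beta_i),
\]
while singleton components contribute $0$. By contrast, $\rmax(\pi)=\sum_i\rmax(\beta_i)$ with no exception for singletons.

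\textbf{Step 3: generating functions.} Define:
\begin{itemize}
\item $A(x,q)=\sum_{\alpha}q^{2\rmax(\alpha)}x^{2|\alpha|}$, summed over skew indecomposable $\alpha$; this weights one inverse pair, since $\rmax(\alpha^{-1})=\rmax(\alpha)$ (reflection across the main diagonal fixes the set of right-to-left maxima);
\item $C(x,q)=1+\sum_{\gamma}q^{\rmax(\gamma)}x^{|\gamma|}$, summed over skew indecomposable involutions $\gamma$.
\end{itemize}
By Lemma~\ref{lem:paired-skew-decomposition} and the remark following it, $R(x,q)=C/(1-A)$.

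Every permutation is a skew sum of indecomposables, and $\rmax$ over $\mathfrak S_n$ has distribution $q^2(q^2+1)\cdots(q^2+n-1)$ after substituting $q\mapsto q^2$. Hence $P(x,q)=1/(1-A)$, so $A=1-1/P$ and $C=R/P$.

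For $s$, correct only the singleton terms: the inverse pair $(1,1)$ now has weight $x^2$ instead of $q^2x^2$, and the singleton center has weight $x$ instead of $qx$. This gives
\[
F^{\mathcal I}_{\class{041}}
=\frac{C+(1-q)x}{1-A-(1-q^2)x^2}
=\frac{R/P+(1-q)x}{1/P-(1-q^2)x^2},
\]
and multiplying numerator and denominator by $P$ yields \eqref{eq:041-distribution}.

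The main obstacle is the locality claim in Step 2, that singleton components are exactly the ones losing their right-to-left maximum. Once that is established, the rest is bookkeeping with the paired decomposition.
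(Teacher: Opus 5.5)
Your proposal is correct and follows essentially the same route as the paper. You use the same two representatives and the same pointwise identity $\occ=\rmax-s_1$; your $s(\pi)=\sum_{|\beta_i|\ge2}\rmax(\beta_i)$ is that identity, argued via component locality rather than the paper's direct observation that an uncounted right-to-left maximum is a singleton skew component. You then apply the same singleton-weight corrections to $A=1-1/P$ and $C=R/P$ in the paired skew decomposition.
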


\begin{proof}
For the representatives
\[
p=
\pattern{scale=0.6}{2}{1/1,2/2}{0/0,1/0,2/2}
\qquad\text{and}\qquad
p'=
\pattern{scale=0.6}{2}{1/1,2/2}{0/1,1/1,2/2},
\]
let $a<b$ be the selected values of an occurrence. The shading
forces $b$ to be a right-to-left maximum having a smaller element
to its left. Once $b$ is fixed, $p$ requires $a$ to be the smallest
element to the left of $b$, whereas $p'$ requires $a$ to be the
largest such element smaller than $b$. Hence $a$ is uniquely
determined in both cases.

Let $s_1(\pi)$ denote the number of singleton components in the
canonical skew decomposition of $\pi$. A right-to-left maximum
$b$ is not counted precisely when it has no smaller element to its
left. In this case, the elements to the left and right of $b$ are
$\{b+1,\ldots,n\}$ and $[b-1]$, respectively, so $b$ is a singleton
skew component. The converse is immediate. Thus, for every
$\pi\in\mathcal I_n$,
$\occ_p(\pi)=\occ_{p'}(\pi)=\rmax(\pi)-s_1(\pi).$

By Lemma~\ref{lem:paired-skew-decomposition}, every involution has
the form
$\alpha_1\ominus\cdots\ominus\alpha_m\ominus\gamma
\ominus\alpha_m^{-1}\ominus\cdots\ominus\alpha_1^{-1},$
where each $\alpha_i$ is nonempty and skew indecomposable, and
$\gamma$ is either empty or a skew indecomposable involution.
A point is a right-to-left maximum exactly when
its northeast region is empty. This property is
preserved by transposition, so
$\rmax(\alpha^{-1})=\rmax(\alpha)$.
Since
$\sum_{\alpha\in\mathfrak S_m}
q^{2\rmax(\alpha)}
=
(q^2)^{\overline m},$
the series $P(x,q)$ counts arbitrary inverse pairs
$(\alpha,\alpha^{-1})$ by total length and total number of
right-to-left maxima. Let $A(x,q)$ count such a pair with
$\alpha$ nonempty and skew indecomposable. Since every permutation
has a unique skew decomposition and $\rmax$ is additive under skew
sums,
$A(x,q)=1-{P(x,q)^{-1}}.$

Let $C(x,q)$ count the possible central component $\gamma$ by length
and $\rmax$. The paired skew decomposition gives
\[
R(x,q)
=
\frac{C(x,q)}{1-A(x,q)}
=
C(x,q)P(x,q),
\]
and hence
$C(x,q)=\frac{R(x,q)}{P(x,q)}.$

It remains to remove the $q$-weight of singleton components. A
singleton inverse pair has weight $q^2x^2$ in $A(x,q)$ and must
instead have weight $x^2$, while a singleton central component has
weight $qx$ in $C(x,q)$ and must instead have weight $x$. Summing over the number of inverse pairs gives
\[
\begin{aligned}
F^{\mathcal I}_{\class{041}}(x,q)
&=
\sum_{m\geq0}
\bigl(A(x,q)+(1-q^2)x^2\bigr)^m
\bigl(C(x,q)+(1-q)x\bigr)\\
&=
\frac{R(x,q)+(1-q)xP(x,q)}
     {1-(1-q^2)x^2P(x,q)}.
\end{aligned}
\]
This proves \eqref{eq:041-distribution}.
\end{proof}

\begin{theorem}\label{thm:088}
For every $n\geq0$, all patterns in $\class{088}$ are equidistributed
on $\mathcal I_n$. Moreover,
\[
F^{\mathcal I}_{\class{088}}(x,q)
=
\frac{\bigl(1+(1-q)x\bigr)\mathcal I(x)}
     {1+(1-q)x\mathcal I(x)}.
\]
\end{theorem}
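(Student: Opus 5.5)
Since the representatives of \class{088} are not displayed before the statement, I will first read off from the classification table the two (or more) symmetry-orbit representatives $p,p'$ of \class{088}, and take the intended statistic to be the one suggested by the introduction: the number of singleton components in the direct-sum decomposition. Concretely, I expect each representative to have heavy shading, so that an occurrence $(a,b)$ is forced to be rigid. For example, the points in the lower-left box and the upper-right box are forced and everything else is empty, so the occurrence is pinned to a fixed point $c$ with $\pi=\sigma\oplus 1\oplus\tau$, via a neighbouring value. The first step is to prove, for each representative separately, that $\occ_p(\pi)$ equals $s(\pi)$, the number of direct-sum components of $\pi$ equal to the singleton $1$. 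Possibly there is a boundary correction, e.g.\ $s(\pi)$ minus an indicator for the last component. I would do this by the same pointwise argument as in Theorems~\ref{thm:033} and~\ref{thm:041}: show that the shading forces one selected point to be a singleton direct-sum component, and that the partner point is then uniquely determined.

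The second step is the enumeration. Every involution has a unique direct-sum decomposition into direct-sum-indecomposable components. Each component is itself an involution, because inversion preserves direct sums componentwise and uniqueness forces $\beta_i^{-1}=\beta_i$. This is simpler than Lemma~\ref{lem:paired-skew-decomposition}, since no pairing occurs. Let $D(x)$ count nonempty indecomposable involutions. Then $\mathcal I(x)=1/(1-D(x))$, so $D(x)=1-\mathcal I(x)^{-1}$. The singleton contributes $x$ to $D$; marking it by $q$ replaces $D$ with $D+(q-1)x$. Hence
\[
\frac{1}{1-D(x)-(q-1)x}=\frac{1}{\mathcal I(x)^{-1}+(1-q)x}=\frac{\mathcal I(x)}{1+(1-q)x\mathcal I(x)}.
\]
The target has the extra factor $1+(1-q)x$. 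This tells me the statistic is not exactly $s(\pi)$: one singleton position is exempt, presumably a singleton final component $n$ (or initial component $1$) does not yield an occurrence.

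To verify this, I would write $\pi$ as a sequence of components and weight the last component differently. Let $G$ be the generating function for the prefix, $G=\mathcal I/(1+(1-q)x\mathcal I)$. The full series is then $1+G\cdot(D+(q-1)x)+\dots$ adjusted so that a trailing singleton gets weight $x$ rather than $qx$. Equivalently, $F=G\cdot(1+(1-q)x)$, since $G(1-(q-1)x\cdot\text{end})$ corrects the final marked singleton. I will check this algebra carefully and match it against small cases $n\le 3$.

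The main obstacle is the first step. For each representative I must identify exactly which singletons are counted, including the boundary exception, and confirm that all representatives yield the \emph{same} statistic (singletons excluding the last one, say). If instead one representative excludes the first singleton and the other excludes the last, I will identify the two statistics via reverse-complement. That map reverses the direct-sum decomposition and preserves involutions, and I would invoke it just as in Lemma~\ref{lem:involution-symmetries}.
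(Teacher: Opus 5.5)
Your enumeration step is correct and is exactly the paper's. With $Y=1-\mathcal I(x)^{-1}$ counting nonempty sum-indecomposable involutions, give weight $q$ to every singleton component except the last. This yields $1+\sum_{r\ge1}\bigl(Y+(q-1)x\bigr)^{r-1}Y=1+GY$. That equals your $G\,(1+(1-q)x)$, because $G\,(1+(1-q)x)=G\bigl(1-Y-(q-1)x\bigr)+GY=1+GY$. So the exemption you inferred is the right one: a singleton final component, i.e.\ $\pi_n=n$, is not counted.

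The gap is your first step, which carries the entire equidistribution claim and is left as a guess. Inferring the exempt singleton from the target formula does not show that any representative satisfies $\occ_p(\pi)=s(\pi)-\mathbf 1_{\{\pi_n=n\}}$. Still less does it show that two representatives share this statistic pointwise. The missing argument is short.

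Take $p$ with shaded cells $(1,0),(2,0),(0,1),(1,1),(2,1),(0,2)$ and $p'$ with shaded cells $(1,0),(2,0),(0,1),(2,1),(0,2),(2,2)$. The empty cells $(1,0),(2,0)$ put every value below $a$ to the left of $a$. The empty cells $(0,1),(0,2)$ put only such values there. Hence the entries before $a$ are exactly $[a-1]$, so $\pi_a=a$ is a singleton direct-sum component.

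For $p$, the empty strip $(0,1),(1,1),(2,1)$ then forces $b=a+1$. For $p'$, every entry to the right of $a$ exceeds $a$, so the empty cells $(2,1),(2,2)$ force $b$ into the last position, $b=\pi_n$. Thus your ``neighbouring value'' mechanism is right for $p$ but not for $p'$.

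Conversely, every singleton component $a\le n-1$ yields exactly one occurrence of each pattern. So on every $\pi\in\In$ both counts equal $\#\{a\in[n-1]:\{\pi_1,\ldots,\pi_a\}=[a],\ \pi_a=a\}$. In particular, your reverse-complement fallback is unnecessary.
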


\begin{proof}
For the representatives
\[
p=
\pattern{scale=0.6}{2}{1/1,2/2}{1/0,2/0,0/1,1/1,2/1,0/2}
\qquad\text{and}\qquad
p'=
\pattern{scale=0.6}{2}{1/1,2/2}{1/0,2/0,0/1,2/1,0/2,2/2},
\]
let $a<b$ be the selected values of an occurrence. The shadings force the elements to the left of $a$ to be exactly those smaller than 
$a$. Hence the elements to the left of $a$ are exactly
$1,\ldots,a-1$, so
$\pi_a=a$ and $\{\pi_1,\ldots,\pi_a\}=[a]$. For $p$, the remaining
shading forces $b=a+1$, whereas for $p'$ it forces $b=\pi_n$.
Thus, for every $\pi\in\mathcal I_n$,
$\occ_p(\pi)=\occ_{p'}(\pi)
=
\#\bigl\{
a\in[n-1]:
\{\pi_1,\ldots,\pi_a\}=[a],\ \pi_a=a
\bigr\}.$

Given $\sigma\in\mathfrak S_r$ and $\tau\in\mathfrak S_s$, their
direct sum is
$\sigma\oplus\tau
=
\sigma_1\cdots\sigma_r
(\tau_1+r)\cdots(\tau_s+r).$
Thus $\{\pi_1,\ldots,\pi_a\}=[a]$ means that $\pi$ has a
direct-sum cut after position $a$. Since $\pi_a=a$, we also have
$\{\pi_1,\ldots,\pi_{a-1}\}=[a-1]$, so $\pi_a=a$ forms a component of size 1 in the decomposition into sum-indecomposable components, which is called a singleton component.
For example, $21354=21\oplus1\oplus21$, where the element $3$ is the
singleton component.

Let $Y(x)$ be the generating function for nonempty
sum-indecomposable involutions. Since inversion preserves direct sums and sum indecomposability,
uniqueness of the direct-sum decomposition implies that every
direct-sum component of an involution is itself an involution. Hence
$\mathcal I(x)=\frac{1}{1-Y(x)}.$
Write a nonempty involution as
$\pi=\alpha_1\oplus\cdots\oplus\alpha_r$, where each $\alpha_i$ is
sum indecomposable. For $i<r$, a singleton $\alpha_i$ contributes $qx$, while the
non-singleton choices are counted by $Y(x)-x$. Thus each of
$\alpha_1,\ldots,\alpha_{r-1}$ is counted by
$Y(x)+(q-1)x$. Finally, the choices for $\alpha_r$ are counted by
$Y(x)$, since the last component contributes no occurrence. Hence, for $r\geq1$, involutions with exactly $r$ direct-sum
components are counted by
$\bigl(Y(x)+(q-1)x\bigr)^{r-1}Y(x).$
Summing over $r\geq1$ gives 
$$
F^{\mathcal I}_{\class{088}}(x,q)
=
1+\sum_{r\geq1}
\bigl(Y(x)+(q-1)x\bigr)^{r-1}Y(x)
=
1+\frac{Y(x)}{1-Y(x)-(q-1)x}.
$$
Finally, substituting $Y(x)=1-\mathcal I(x)^{-1}$ yields the formula for $F^{\mathcal I}_{\class{088}}(x,q)$ stated above.
\end{proof}

\begin{theorem}\label{thm:161}
For every $n\geq0$, all patterns in $\class{161}$ are equidistributed
on $\mathcal I_n$. Moreover,
\begin{equation}\label{eq:161-continued-fraction}
F^{\mathcal I}_{\class{161}}(x,q)
=
\cfrac{1}{
1-x-\cfrac{qx^2}{
1-qx-\cfrac{q(1+q)x^2}{
1-qx-\cfrac{q(1+2q)x^2}{
1-qx-\ddots}}}}.
\end{equation}
\end{theorem}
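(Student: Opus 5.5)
The plan is to reduce $\class{161}$ to the right-to-left minimum statistic already handled in Theorem~\ref{thm:033}, this time through its complement $n-\rmin(\pi)$. This is the "reciprocity relation" announced in the introduction.

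\textbf{Step 1: occurrences as non-minima.} For each displayed representative of $\class{161}$ (the orbit representatives in Table~\ref{tab:pattern-guide}), I will give a pointwise description of an occurrence, as in the proofs of Theorems~\ref{thm:004} and~\ref{thm:033}. The expected description is as follows. Consider an occurrence and let $b$ be the larger of its two selected values, which lies to the left of a smaller value. The shading should force $b$ to be an element that is \emph{not} a right-to-left minimum, that is, an element with some smaller entry to its right. Once $b$ is fixed, the remaining shaded cells should single out a unique partner $a$. Natural candidates are the smallest entry to the right of $b$, the leftmost or rightmost entry smaller than $b$ to its right, or the largest entry smaller than $b$ to its right.

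Conversely, every non-right-to-left-minimum $b$ has at least one smaller entry to its right, so the partner exists. This would give
\[
\occ_p(\pi)=n-\rmin(\pi)\qquad(\pi\in\In)
\]
for every representative $p$, which proves the equidistribution. Checking that the shaded cells exactly match this description, separately for each representative, is the main (if routine) obstacle. If some representative instead yields $n-\lmin$ or $n-\rmax$, I will use the fact that reverse-complement and inversion, via Lemma~\ref{lem:involution-symmetries} and the proof of Lemma~\ref{lem:R-recurrence}, interchange these statistics on $\In$.

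\textbf{Step 2: the generating function.} Let
\[
G(x,q)=\sum_{n\ge0}x^n\sum_{\pi\in\In}q^{\rmin(\pi)} .
\]
The proof of Theorem~\ref{thm:033} gives $G(x,q)=1+q\bigl(F^{\mathcal I}_{\class{033}}(x,q)-1\bigr)$. By \eqref{eq:033-continued-fraction}, $G(x,q)$ equals the continued fraction
\[
\cfrac{1}{1-qx-\cfrac{qx^2}{1-x-\cfrac{(q+1)x^2}{1-x-\cdots}}}
\]
whose $k$-th numerator is $(q+k-1)x^2$. Since $q^{\,n-\rmin(\pi)}$ has total degree $n$ split as $q^n\cdot(q^{-1})^{\rmin(\pi)}$, we obtain
\[
F^{\mathcal I}_{\class{161}}(x,q)=G(qx,q^{-1}).
\]

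\textbf{Step 3: the substitution.} Substituting $x\mapsto qx$ and $q\mapsto q^{-1}$ level by level does the rest. The top denominator becomes $1-x$, the other linear terms become $1-qx$, and the $k$-th numerator becomes
\[
(q^{-1}+k-1)\,q^2x^2=q\bigl(1+(k-1)q\bigr)x^2 .
\]
This is exactly \eqref{eq:161-continued-fraction}.
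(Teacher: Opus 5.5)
Your proof is correct and is essentially the paper's. Occurrences correspond to the entries that are not right-to-left minima, so $\occ_p(\pi)=n-\rmin(\pi)$. Your $G(qx,q^{-1})$ is exactly the paper's reciprocity $1+q^{-1}\bigl(F^{\mathcal I}_{\class{033}}(qx,q^{-1})-1\bigr)$, followed by the same level-by-level substitution.

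Step~1 is still only a plan, not a verification. It needs to be checked on the actual representatives $(21,\{(1,0),(2,0)\})$ and $(21,\{(1,0),(1,1)\})$. Their partners are your first two candidates: the smallest entry to the right of $b$, and the leftmost entry to the right of $b$ that is smaller than $b$.

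Drop the fallback about $n-\lmin$ or $n-\rmax$, because it is false. On $\In$, inversion only identifies $\rmin$ with $\operatorname{lmax}$, and reverse-complement only swaps $\lmin\leftrightarrow\rmax$ and $\rmin\leftrightarrow\operatorname{lmax}$. Nothing links $\lmin$ to $\rmin$. For $n=3$ their distributions are $2q+q^2+q^3$ and $q+2q^2+q^3$, and a pattern counting $n-\lmin$ belongs to $\class{004}$, not $\class{161}$.
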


\begin{proof}
For the representatives
\[
p=\pattern{scale=0.6}{2}{1/2,2/1}{1/0,2/0}
\qquad\text{and}\qquad
p'=\pattern{scale=0.6}{2}{1/2,2/1}{1/0,1/1},
\]
let $b>a$ be the selected values from left to right in an occurrence.
Every element $b$ that is not a right-to-left minimum determines a
unique occurrence: for $p$, the shading forces $a$ to be the smallest
element to the right of $b$, while for $p'$ it forces $a$ to be the
leftmost element to the right of $b$ that is smaller than $b$.
Hence, for every $\pi\in\mathcal I_n$,
$\occ_p(\pi)=\occ_{p'}(\pi)=n-\rmin(\pi)$.

To obtain the continued fraction, we use the reciprocity with
$\class{033}$. By Theorem~\ref{thm:033}, for $n\geq1$,
\[
[x^n]F^{\mathcal I}_{\class{161}}(x,q)
=
q^{n-1}[x^n]F^{\mathcal I}_{\class{033}}(x,q^{-1}),
\]
since $n-\rmin(\pi)=n-1-(\rmin(\pi)-1)$. Summing over $n\geq1$
gives
\[
F^{\mathcal I}_{\class{161}}(x,q)
=
1+q^{-1}
\bigl(F^{\mathcal I}_{\class{033}}(qx,q^{-1})-1\bigr).
\]
Substituting $qx$ for $x$ and $q^{-1}$ for $q$ in
Theorem~\ref{thm:033} gives \eqref{eq:161-continued-fraction}.
\end{proof}

\begin{theorem}\label{thm:272}
For every $n\geq0$, all patterns in $\class{272}$ are equidistributed
on $\mathcal I_n$. Moreover,
\begin{equation}\label{eq:272-distribution}
F^{\mathcal I}_{\class{272}}(x,q)
=
\frac{
\bigl(1-q(q-1)x^2\bigr)\mathcal I(x)
+
(q-1)x
\bigl(
\mathcal P(x^2)-1+(q-1)x^2\mathcal P(x^2)
\bigr)
}{
1-(q^2-1)x^2\mathcal P(x^2)
}.
\end{equation}
\end{theorem}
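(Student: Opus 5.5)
The plan is to follow the template of Theorem~\ref{thm:041}. First, for each displayed representative of $\class{272}$, let $a<b$ (or $b>a$) be the selected values of an occurrence. I will read off from the shading a pointwise description of the occurrences. The goal is to show that, in every case, an occurrence is determined by a single point $c$ of $\pi$ that forms a singleton component in the canonical skew decomposition of $\pi$, with the second point of the occurrence forced by the shading (for instance, as the neighbouring element in position or in value). Because the formula contains $\mathcal P(x^2)$ and a weight $q^2-1$ on $x^2$ terms, I expect the following. Singleton components inside an inverse pair contribute $q^2$. A singleton center contributes $q$. The outermost inverse pair contributes differently, because the forced second point of the occurrence may fail to exist there (for example, no element to its left or right). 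Concretely, I would prove an identity of the form $\occ_p(\pi)=\occ_{p'}(\pi)=s_1(\pi)-\varepsilon(\pi)$. Here $s_1$ is as in the proof of Theorem~\ref{thm:041}, and $\varepsilon(\pi)$ is a boundary correction depending only on whether the first and last skew components, and possibly the center, are singletons. Equidistribution of the representatives then follows immediately, since the right-hand side does not depend on the pattern.

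For the generating function I will use Lemma~\ref{lem:paired-skew-decomposition} without the $q$-weight of $\rmax$. The inverse pairs $(\alpha,\alpha^{-1})$ with $\alpha$ arbitrary are counted by $\mathcal P(x^2)$. Hence the pairs with $\alpha$ nonempty and skew indecomposable are counted by $A(x)=1-\mathcal P(x^2)^{-1}$. From $\mathcal I(x)=C(x)/(1-A(x))$, the center series is $C(x)=\mathcal I(x)/\mathcal P(x^2)$. I then reweight the singletons. Each inner singleton pair is counted by $A(x)+(q^2-1)x^2$. The outer pair gets its own corrected weight $A(x)+(w-1)x^2$, where $w\in\{1,q\}$ is dictated by $\varepsilon$. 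The center is counted by $C(x)+(q-1)x$, with a separate treatment of the case $m=0$, where the center is the whole involution and the boundary effect may apply to it as well.

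Summing over the number $m$ of inverse pairs gives a contribution from $m=0$ plus the outer pair times a geometric series in the inner pairs times the center. This yields a rational expression in $\mathcal I(x)$ and $\mathcal P(x^2)$ with denominator $1-A-(q^2-1)x^2=\mathcal P(x^2)^{-1}\bigl(1-(q^2-1)x^2\mathcal P(x^2)\bigr)$. This already matches the denominator of \eqref{eq:272-distribution}. Clearing the factor $\mathcal P(x^2)$ and collecting terms should produce the numerator $\bigl(1-q(q-1)x^2\bigr)\mathcal I(x)+(q-1)x\bigl(\mathcal P(x^2)-1+(q-1)x^2\mathcal P(x^2)\bigr)$. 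The $-1$ inside the bracket reflects the special handling of the empty and $m=0$ cases. I will check the result against small $n$ (say $n\le 4$) to calibrate the exact boundary weights.

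The main obstacle is the first step. I must determine precisely which singleton components fail to support an occurrence near the boundary of the decomposition, including the degenerate small cases, so that the correction $\varepsilon$ is exactly right for both representatives. If the naive guess does not reproduce the numerator, I would first try shifting the exceptional weight from the outer pair to the center, and then try combinations of the two, comparing coefficients of $x^2$ and $x^3$ to decide.
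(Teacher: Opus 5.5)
Your plan follows the paper's proof. Both representatives are counted by singleton skew components minus a boundary correction, and the distribution comes from Lemma~\ref{lem:paired-skew-decomposition} with $A(x)=1-\mathcal P(x^2)^{-1}$ and $C(x)=\mathcal I(x)/\mathcal P(x^2)$. Inner pairs are weighted by $A(x)+(q^2-1)x^2$, and $m=0$ gets a separate term. What is missing is the one step with real content: you never determine $\varepsilon$. Instead you propose to choose $w$ and the center weights by matching coefficients of $x^2$ and $x^3$ against the target. That can pick among your candidates, but it does not prove the chosen correction holds for all $n$. As written, the argument is circular at exactly that point.

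The step is short once the shading is read off. Take the representative $p$, with shaded cells $(0,0),(1,0),(0,1),(1,1),(2,1),(2,2)$ on $21$ and selected values $a>b$.
\begin{itemize}
\item The shaded middle row forces $a=b+1$.
\item Cells $(0,0)$ and $(1,0)$ push every value below $b$ to the right of $b$.
\item Cell $(2,2)$ pushes every value above $a$ to the left of $b$.
\end{itemize}
Hence $b$ is a singleton skew component. The only extra requirement is $b<n$, i.e.\ $b$ is not in position~$1$.

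For $p'$, with shaded cells $(0,0),(1,0),(0,1),(2,1),(0,2),(2,2)$, the shaded left column forces $a=\pi_1$. The remaining cells again make $b$ a singleton component, now required to sit in a position $\geq 2$.

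So for every nonempty permutation,
\[
\occ_p(\pi)=\occ_{p'}(\pi)=s_1(\pi)-\mathbf 1_{\{\pi_1=n\}}.
\]
Only the first component is lost, never the last: for $b=1$ in position $n\geq2$ the partner $2$ (for $p$), respectively $\pi_1$ (for $p'$), exists.

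In the paired decomposition, $\alpha_1$ is a singleton if and only if $\alpha_1^{-1}$ is. So a singleton outer pair has weight $qx^2$, i.e.\ $w=q$, and the outer pair is counted by $A(x)+(q-1)x^2$. For $m=0$ the center is the first component and keeps weight $C(x)$. For $m\geq1$ it is never first and gets $C(x)+(q-1)x$. With these weights your summation yields \eqref{eq:272-distribution}, exactly as in the paper.
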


\begin{proof}
Consider the
representatives
$$
p=
\pattern{scale=0.6}{2}{1/2,2/1}
{0/0,1/0,0/1,1/1,2/1,2/2}
\qquad\text{and}\qquad
p'=
\pattern{scale=0.6}{2}{1/2,2/1}
{0/0,1/0,0/1,2/1,0/2,2/2}.
$$
Suppose that $a>b$ forms an occurrence of either $p$ or $p'$, and
set $r=b$. For $p$, the shadings force $a=r+1$, while for $p'$
they force $a=\pi_1$. In both cases, the remaining shadings require
every value larger than $r$ to occur to the left of every value in
$[r]$, with $r$ being the leftmost among the values in $[r]$.
Conversely, every such $r$ determines a unique occurrence of $p$
and of $p'$, namely $(r+1,r)$ and $(\pi_1,r)$, respectively.

For $\pi\in\mathcal I_n$, let $\mathit{msc}(\pi)$ denote the number of such $r\in[n-1]$. Thus both representatives have exactly $\operatorname{msc}(\pi)$ occurrences.
For example, in the involution $\pi=45312$, the value $r=3$ is the
unique value satisfying these conditions: the values $4$ and $5$
lie to the left of all values in $[3]$, and $3$ is the leftmost of
$1,2,3$. Hence $\operatorname{msc}(45312)=1$. 

% Now suppose that $\pi$ is an involution. Note that inversion
% reverses a skew sum:
% $(\alpha\ominus\beta)^{-1}
% =
% \beta^{-1}\ominus\alpha^{-1}.$
% Hence, if
% $\pi=\alpha_1\ominus\cdots\ominus\alpha_k$ is its canonical skew
% decomposition, then
% $\pi^{-1}
% =\alpha_k^{-1}\ominus\cdots\ominus\alpha_1^{-1}$.
% Since $\pi=\pi^{-1}$ and the canonical skew decomposition is unique,
% we must have
% $\alpha_i=\alpha_{k+1-i}^{-1}$ for every $i$.
% This implies that all non-central components occur symmetrically as \emph{inverse pairs} of the form $(\alpha, \alpha^{-1})$.
% Consequently, every involution has the form
% $\alpha_1\ominus\cdots\ominus\alpha_m
% \ominus\gamma
% \ominus\alpha_m^{-1}\ominus\cdots\ominus\alpha_1^{-1},$
% where the central component $\gamma$ is either empty or is a
% skew indecomposable involution.

% Let $\mathcal S(x)$ be the generating function for nonempty skew indecomposable permutations, and put $\mathcal P(x)=\sum_{n\geq0}n!x^n$. Since every permutation is a sequence of such components, $\mathcal{P}(x) = (1 - \mathcal{S}(x))^{-1}$. An inverse pair of components $(\alpha,\alpha^{-1})$ has doubled length, giving the generating function $A(x)=\mathcal S(x^2)=1-\mathcal P(x^2)^{-1}$. The sequence of these paired outer and inner layers is thus generated by $(1-A(x))^{-1}$. Let $C(x)$ count the possibly empty central skew indecomposable involution. Assembling these paired layers around the center yields $\mathcal I(x)=C(x)/(1-A(x))$, meaning $C(x)=\mathcal I(x)/\mathcal P(x^2)$.

It remains to enumerate involutions according to $\operatorname{msc}$. Every
nonempty permutation has a unique decomposition
$\pi=\alpha_1\ominus\cdots\ominus\alpha_k,$
where each $\alpha_i$ is skew indecomposable. A value $r$ counted by $\operatorname{msc}(\pi)$ determines a skew cut immediately before the block occupied by the values in $[r]$. Since $r$ is the leftmost element of this block, its corresponding component begins with its maximum. A skew-indecomposable component of size greater than one cannot begin with its maximum, as that element would immediately split off. Thus, this component must be a singleton block. Conversely,
every singleton component $\alpha_i$ with $i\geq2$ gives such an
$r$. Therefore, $\operatorname{msc}(\pi)$ is exactly the number of singleton components among $\alpha_2,\ldots,\alpha_k$.

By Lemma~\ref{lem:paired-skew-decomposition}, an involution $\pi$ takes the symmetric form $\alpha_1\ominus\cdots\ominus\alpha_m \ominus\gamma \ominus\alpha_m^{-1}\ominus\cdots \ominus\alpha_1^{-1}$. Let $\mathcal S(x)$ be the generating function for nonempty skew indecomposable permutations. Since every permutation is a sequence of such components, we have $\mathcal{P}(x) = (1 - \mathcal{S}(x))^{-1}$. An inverse pair $(\alpha,\alpha^{-1})$ has doubled length, so its generating function is $A(x)=\mathcal S(x^2)=1-\mathcal P(x^2)^{-1}$. It then follows from the structural enumeration in Lemma~\ref{lem:paired-skew-decomposition} that the central component $\gamma$ is generated by $C(x)=\mathcal I(x)(1-A(x))=\mathcal I(x)/\mathcal P(x^2)$.

For the distribution, each singleton component except the first
contributes a factor $q$. There are two cases to consider,
according as $m=0$ or $m\geq1$.
\begin{itemize}
    \item \textbf{No inverse pairs ($m=0$):} The involution consists solely of the center. Being the first component, it is never counted by $\operatorname{msc}$ even if it is a singleton. Its contribution remains unchanged as $C(x)$.
    
    \item \textbf{One or more inverse pairs ($m \geq 1$):} The involution decomposes into an outermost pair, $m-1$ inner pairs, and a center.
    \begin{itemize}
        \item \emph{Outermost pair $(\alpha_1,\alpha_1^{-1})$:} The left block is the first overall (receiving no $q$), but the right block is counted (receiving $q$). The singleton pair $(1,1)$ shifts weight from $x^2$ to $qx^2$. Thus, the outermost pair contributes $A(x)+(q-1)x^2$.
        \item \emph{Inner pairs $(\alpha_i,\alpha_i^{-1})$, $2\leq i\leq m$:} Neither block is first, so both are counted if they are singletons. The weight shifts from $x^2$ to $q^2x^2$, yielding $A(x)+(q^2-1)x^2$ per inner pair.
        \item \emph{center:} Protected by the outermost pair, the center is no longer the first component. A singleton center shifts weight from $x$ to $qx$, contributing $C(x)+(q-1)x$.
    \end{itemize}
\end{itemize}
For fixed 
$m\geq 1$, the contribution is
$\bigl(A(x)+(q-1)x^2\bigr)
\bigl(A(x)+(q^2-1)x^2\bigr)^{m-1}
\bigl(C(x)+(q-1)x\bigr).$
Summing this over all $m\geq1$ and adding the $m=0$ case yields
\begin{equation}\label{eq:272-structural-gf}
\begin{aligned}
F^{\mathcal I}_{\class{272}}(x,q)
&=
C(x)+
\sum_{m\geq1}
\bigl(A(x)+(q-1)x^2\bigr)
\bigl(A(x)+(q^2-1)x^2\bigr)^{m-1}
\bigl(C(x)+(q-1)x\bigr)
\\
&=
C(x)+
\frac{A(x)+(q-1)x^2}
{1-A(x)-(q^2-1)x^2}
\bigl(C(x)+(q-1)x\bigr).
\end{aligned}
\end{equation}

Finally, substituting $A(x)=1-\mathcal P(x^2)^{-1}$ and $C(x)=\mathcal I(x)/\mathcal P(x^2)$ into \eqref{eq:272-structural-gf} and simplifying yields the formula for $F^{\mathcal I}_{\class{272}}(x,q)$ stated above.
\end{proof}

% ========================================

\begin{theorem}\label{lem:endpoint-indicators}
For every $n\geq0$, all patterns within each of the classes
$\class{045}$, $\class{089}$, and $\class{179}$ are equidistributed
on $\mathcal I_n$. Moreover,
\begin{align*}
F^{\mathcal I}_{\class{045}}(x,q)
&=
\bigl(q+(1-q)x^2\bigr)\mathcal I(x)
+(1-q)(1+x),\\
F^{\mathcal I}_{\class{089}}(x,q)
&=
\bigl(1+(q-1)x\bigr)\mathcal I(x)
+(1-q)x,\\
F^{\mathcal I}_{\class{179}}(x,q)
&=
\bigl(q+(1-q)x\bigr)\mathcal I(x)
+(1-q).
\end{align*}
\end{theorem}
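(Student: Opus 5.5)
The plan is to show that, for every representative of each of the three classes, $\occ_p(\pi)$ takes only the values $0$ and $1$ on $\In$, and that it is the indicator of one simple endpoint condition. Once this is done, the generating functions follow by counting. Reading off coefficients of the stated formulas shows which condition to aim for in each class. Taking $n\ge2$ throughout:
\[
[x^n]F^{\mathcal I}_{\class{179}}=qI_n+(1-q)I_{n-1},\qquad
[x^n]F^{\mathcal I}_{\class{089}}=I_n+(q-1)I_{n-1},\qquad
[x^n]F^{\mathcal I}_{\class{045}}=qI_n+(1-q)I_{n-2}.
\]
Small $n$ are handled by the constant corrections. In particular the value $1$ for $n\le1$ is exactly what the terms $(1-q)$, $(1-q)x$ and $(1-q)(1+x)$ repair.

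I would therefore aim for the following identities for every representative $p$ and every $\pi\in\In$:
\begin{itemize}
\item for $\class{179}$, $\occ_p(\pi)=\mathbf 1_{\{\pi_1\neq1\}}$, or the symmetric condition $\mathbf 1_{\{\pi_n\neq n\}}$;
\item for $\class{089}$, $\occ_p(\pi)=\mathbf 1_{\{\pi_1=1,\ n\ge2\}}$, or its symmetric version;
\item for $\class{045}$, $\occ_p(\pi)=\mathbf 1_{\{\pi_1\neq n\}}$ when $n\ge2$, and $0$ otherwise.
\end{itemize}

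\textbf{Step 1: pointwise analysis.} For each representative, let $a,b$ be the selected values. I would show that the heavy shading pins one of the selected points to a corner or boundary of the plot. Typically the shading forces it to be the first entry, or the entry $1$, or the entry $n$. The remaining shading then determines the second point uniquely, for example as $\pi_n$, as $a+1$, or as the extreme entry to one side, exactly as in the proofs of Theorems~\ref{thm:088} and~\ref{thm:272}. This gives at most one occurrence.

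I would then check that the existence condition reduces to the indicator above. This is where involutivity is essential, through $\pos_\pi(a)=\pi_a$. For instance, "the value $1$ is in position $1$" is equivalent to $\pi_1=1$. Likewise, "the first entry is the maximum" is equivalent to $\pi_1=n$, which for an involution is the same as $\pi_n=1$. Without this symmetry, the different representatives would encode genuinely different conditions, such as a condition on a position versus a condition on a value, and they could fail to be equidistributed. That is presumably why these classes are nontrivial only on $\In$.

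\textbf{Step 2: counting.}
\begin{itemize}
\item Involutions with $\pi_1=1$ are obtained by adjoining a fixed point to an element of $\mathcal I_{n-1}$, so there are $I_{n-1}$ of them.
\item Involutions with $\pi_1=n$ are obtained by adjoining the transposition $(1,n)$ to an element of $\mathcal I_{n-2}$, so there are $I_{n-2}$ of them.
\end{itemize}
Summing $q^{\occ}$ over $\In$ gives the coefficients displayed above. The formulas then follow from $\sum_{n\geq1}I_{n-1}x^n=x\mathcal I(x)$ and $\sum_{n\ge2}I_{n-2}x^n=x^2\mathcal I(x)$, together with the corrections at $n=0,1$, which I would verify directly.

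\textbf{Main obstacle.} I expect Step 1 to be the hard part: it requires a case check over all representatives in Table~\ref{tab:pattern-guide}. The delicate point is excluding spurious occurrences in which neither selected point lies at the boundary. I would handle this by identifying a shaded corner region, for example cells $(0,0)$ and $(2,2)$, whose emptiness forces the relevant selected point to be $\pi_1$ or the value $1$. After that, I would translate the resulting condition through $\pi=\pi^{-1}$.
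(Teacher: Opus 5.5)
Your plan matches the paper's proof: there, too, each representative is shown to have at most one occurrence, given by $\mathbf 1_{\{\pi_1<n\}}$, $\mathbf 1_{\{\pi_1=1\}}$, or $\mathbf 1_{\{\pi_1\neq 1\}}$ for $n\ge2$ (three of the $\class{045}$ representatives give $\pi_n\neq1$, which is converted via $\pos_\pi(1)=\pi_1$), and the counts $I_{n-2}$ (adjoin $(1\,n)$) and $I_{n-1}$ (adjoin the fixed point $1$) then give the three formulas. One correction for when you write out Step~1: the pinning comes from a fully shaded boundary row or column, not from corner cells such as $(0,0)$ and $(2,2)$ — every representative has one, and the bottom row forces the value $1$, the left column forces position $1$, and the right column forces position $n$.
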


\begin{proof}
For the representatives displayed below, the shaded cells force the
following numbers of occurrences for every $n\geq2$ and $\pi\in\In$.
\begin{center}
\renewcommand{\arraystretch}{1.3}
\begin{tabular}{ccc}
\toprule
class & representative patterns & occurrences\\
\midrule
$\class{045}$
&
\pattern{scale=0.6}{2}{1/1,2/2}{0/0,1/0,2/0,1/1,2/1}
\pspace
\pattern{scale=0.6}{2}{1/1,2/2}{0/0,0/1,1/1,2/1,0/2}
\pspace
\pattern{scale=0.6}{2}{1/1,2/2}{0/0,1/0,2/0,2/1,2/2}
\pspace
\pattern{scale=0.6}{2}{1/1,2/2}{0/0,2/0,0/1,2/1,2/2}
&
$\mathbf1_{\{\pi_1<n\}}$
\\
$\class{089}$
&
\pattern{scale=0.6}{2}{1/1,2/2}{0/0,1/0,2/0,0/1,1/1,2/1,0/2}
\pspace
\pattern{scale=0.6}{2}{1/1,2/2}{0/0,1/0,2/0,0/1,2/1,0/2,2/2}
&
$\mathbf1_{\{\pi_1=1\}}$
\\
$\class{179}$
&
\pattern{scale=0.6}{2}{1/2,2/1}{0/0,1/0,2/0,0/1,1/1}
\pspace
\pattern{scale=0.6}{2}{1/2,2/1}{0/0,1/0,2/0,0/1,0/2}
\pspace
\pattern{scale=0.6}{2}{1/2,2/1}{0/0,2/0,0/1,2/1,0/2}
\pspace
\pattern{scale=0.6}{2}{1/2,2/1}{0/0,0/1,1/1,2/1,0/2}
&
$\mathbf1_{\{\pi_1\neq1\}}$
\\
\bottomrule
\end{tabular}
\end{center}

We consider the three rows separately.

For $\class{045}$, the first, third, and fourth representatives have a unique
occurrence precisely when $\pi_n\neq1$, while the second does so precisely when
$\pi_1\neq n$. Since $\pi$ is an involution,
$\operatorname{pos}_{\pi}(1)=\pi_1,$
these conditions are equivalent. Hence all four representatives have  $\mathbf1_{\{\pi_1<n\}}$ occurrences. If $\pi_1=n$, then $(1,n)$ is a transposition of $\pi$, and deleting it gives an involution in $\mathcal I_{n-2}$. Thus $I_{n-2}$ involutions have no
occurrence and $I_n-I_{n-2}$ have one, proving the first formula.

For $\class{089}$, either representative has a unique occurrence precisely when
$\pi_1=1$. Deleting this fixed point gives an involution in
$\mathcal I_{n-1}$. Hence $I_{n-1}$ involutions have one occurrence and
$I_n-I_{n-1}$ have none, proving the second formula.

For $\class{179}$, the shading of each representative forces a unique
occurrence precisely when $\pi_1\neq1$. There are $I_{n-1}$
involutions satisfying $\pi_1=1$, obtained by adjoining the fixed
point $1$ to an involution in $\mathcal I_{n-1}$. Thus there are
$I_{n-1}$ involutions with no occurrence and $I_n-I_{n-1}$ with one
occurrence. Hence, summing over all lengths gives the third formula.
\end{proof}

\begin{theorem}\label{thm:114}
For every $n\geq0$, all patterns in $\class{114}$ are equidistributed
on $\mathcal I_n$. Moreover,
\[
F^{\mathcal I}_{\class{114}}(x,q)
=
\bigl(1+(q-1)x^2\bigr)\mathcal I(x).
\]
\end{theorem}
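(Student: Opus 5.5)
The plan is to handle one representative from each of the two symmetry orbits listed for $\class{114}$ in Table~\ref{tab:pattern-guide}. By Lemma~\ref{lem:involution-symmetries} this is enough. For each representative I will show that, on $\mathcal I_n$ with $n\ge2$, the number of occurrences is an indicator of a condition that holds for exactly $I_{n-2}$ involutions. The generating function is then
\[
\sum_{n\ge0}x^n\bigl(I_n-I_{n-2}+qI_{n-2}\bigr)=\bigl(1+(q-1)x^2\bigr)\mathcal I(x),
\]
with the convention $I_{-1}=I_{-2}=0$. For $n\le1$ there is no occurrence, and the formula agrees since its coefficient of $x^n$ is then $I_n$.

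\textbf{First orbit.} I will use the representative $p=\pattern{scale=0.6}{2}{1/1,2/2}{0/0,1/0,2/0,0/1,1/1,2/1,0/2,1/2}$, in which every cell except $(2,2)$ is shaded. Let $a<b$ be the selected values of an occurrence.
\begin{itemize}
\item Row $0$ is shaded, so no value is smaller than $a$; hence $a=1$.
\item Row $1$ is shaded, so no value lies between $a$ and $b$; hence $b=2$.
\item Column $0$ is shaded, so nothing lies to the left of $a$; hence $a$ is at position $1$.
\item Column $1$ is shaded, so nothing lies between $a$ and $b$; hence $b$ is at position $2$.
\end{itemize}
The only unshaded cell is the northeast one, which causes no conflict. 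Therefore $\occ_p(\pi)=\mathbf 1_{\{\pi_1=1,\ \pi_2=2\}}$. For an involution this condition says that $1$ and $2$ are fixed points. Deleting them gives a bijection onto $\mathcal I_{n-2}$.

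\textbf{Second orbit.} I will use the representative $p'=\pattern{scale=0.6}{2}{1/1,2/2}{0/0,1/0,2/0,0/1,1/1,2/1,2/2}$, whose unshaded cells are $(0,2)$ and $(1,2)$.
\begin{itemize}
\item Rows $0$ and $1$ are fully shaded, so again $a=1$ and $b=2$.
\item The shaded cell $(2,2)$ means that no value larger than $2$ lies to the right of $2$.
\item The remaining shaded cells only say that everything to the left of $1$, or between $1$ and $2$ in position, exceeds $2$. This is automatic.
\end{itemize}
So an occurrence exists exactly when $1$ precedes $2$ and $\pi_n=2$. This is the step where involutivity is used: $\pi_n=2$ forces $\pi_2=n$, i.e.\ $(2,n)$ is a transposition, or $\pi=12$ when $n=2$. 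For $n\ge3$ the position of $1$ is $\pi_1\ne n$, so $1$ automatically precedes $2$.

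Hence $\occ_{p'}(\pi)=1$ exactly when $2$ and $n$ are paired (or $\pi=12$ for $n=2$). Deleting this transposition, or the two fixed points when $n=2$, and standardizing gives a bijection onto $\mathcal I_{n-2}$.

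Both representatives therefore have the same distribution $q I_{n-2}+(I_n-I_{n-2})$ on $\mathcal I_n$, which gives the formula. The main point to check carefully is that reading the mesh shadings is exhaustive, in particular for the second orbit. There one must confirm that the unshaded cells $(0,2)$ and $(1,2)$ impose nothing further, and that the boundary case $n=2$ is consistent. I expect no serious obstacle beyond this bookkeeping.
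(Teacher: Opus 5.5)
\textbf{The proposal has a coverage gap: it treats only two of the eleven symmetry orbits in \class{114}.}

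Your analysis of the two patterns you chose is correct. It is also exactly the paper's method: show that the pattern occurs at most once, identify an occurrence with a forced pair of fixed points or a forced $2$-cycle, and delete that pair to land in $\mathcal I_{n-2}$.

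The gap is the reduction in your first sentence. \class{114} is not made of two symmetry orbits. The row of Table~\ref{tab:pattern-guide} you relied on is only the first of the six rows spanned by the \class{114} label, and the paper's proof treats eleven representatives. Inverse and reverse-complement preserve the underlying classical pattern. So Lemma~\ref{lem:involution-symmetries} cannot connect the eight orbits with underlying pattern $21$ to your representatives. It also cannot connect the third $12$-orbit to them. As written, you have proved equidistribution of two of the eleven orbits, not of the whole class.

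The missing orbits have the following representatives.
\begin{itemize}
\item The pattern $12$ with every cell except $(1,1)$ shaded. It occurs exactly when $1$ and $n$ are fixed points.
\item The pattern $21$ with shading $R_0=\{(0,0),(1,0),(2,0),(1,1),(2,1),(0,2)\}$, $R_0\cup\{(0,1)\}$, $R_0\cup\{(1,2)\}$ or $R_0\cup\{(0,1),(1,2)\}$. Each occurs exactly when $(1\,2)$ is a $2$-cycle.
\item The pattern $21$ with shading $\{(0,0),(1,0),(2,0),(0,1),(1,1),(2,1),(2,2)\}$, $R_1=\{(0,0),(1,0),(2,0),(2,1),(0,2),(2,2)\}$, $R_1\cup\{(0,1)\}$ or $R_1\cup\{(0,1),(1,2)\}$. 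Each occurs exactly when $(1\,n)$ is a $2$-cycle.
\end{itemize}

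These checks are not pure bookkeeping, because for several of them involutivity does real work. Take $(21,R_0)$ as an example.
\begin{itemize}
\item The shading forces the smaller selected value to be $1$.
\item It also forces the positions to the left of the larger selected value $b$ to hold exactly the values $2,\ldots,b-1$, so $b$ sits at position $b-1$.
\item Only $\pos_\pi(1)=\pi_1$ then rules out $b\geq 3$. The non-involution $231$ has an occurrence with $b=3$.
\end{itemize}

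Once each of the eleven representatives is shown to occur exactly once, precisely on a set of involutions in bijection with $\mathcal I_{n-2}$, your final generating-function computation applies unchanged.
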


\begin{proof}
For $n=2$, each representative has distribution $1+q$. Now let $n\ge3$. Each representative below occurs at most once. An occurrence is exactly one of the specific cycles below:
\begin{center}
\renewcommand{\arraystretch}{1.5}
\begin{tabular}{cl}
\toprule
representative patterns & forced cycles in $\pi$\\
\midrule
\pattern{scale=0.6}{2}{1/1,2/2}
{0/0,1/0,2/0,0/1,1/1,2/1,0/2,1/2}
& $(1)(2)$\\

\pattern{scale=0.6}{2}{1/1,2/2}
{0/0,1/0,2/0,0/1,1/1,2/1,2/2}
& $(2\,n)$\\

\pattern{scale=0.6}{2}{1/1,2/2}
{0/0,1/0,2/0,0/1,2/1,0/2,1/2,2/2}
& $(1)(n)$\\

\pattern{scale=0.6}{2}{1/2,2/1}
{0/0,1/0,2/0,1/1,2/1,0/2}
\pspace
\pattern{scale=0.6}{2}{1/2,2/1}
{0/0,1/0,2/0,0/1,1/1,2/1,0/2}
\pspace
\pattern{scale=0.6}{2}{1/2,2/1}
{0/0,1/0,2/0,1/1,2/1,0/2,1/2}
\pspace
\pattern{scale=0.6}{2}{1/2,2/1}
{0/0,1/0,2/0,0/1,1/1,2/1,0/2,1/2}
& $(1\,2)$\\

\pattern{scale=0.6}{2}{1/2,2/1}
{0/0,1/0,2/0,0/1,1/1,2/1,2/2}
\pspace
\pattern{scale=0.6}{2}{1/2,2/1}
{0/0,1/0,2/0,2/1,0/2,2/2}
\pspace
\pattern{scale=0.6}{2}{1/2,2/1}
{0/0,1/0,2/0,0/1,2/1,0/2,2/2}
\pspace
\pattern{scale=0.6}{2}{1/2,2/1}
{0/0,1/0,2/0,0/1,2/1,0/2,1/2,2/2}
& $(1\,n)$\\
\bottomrule
\end{tabular}
\end{center}
Each pattern restricts exactly two elements. Deleting them leaves an arbitrary involution on the remaining $n-2$ elements. Thus, exactly $I_{n-2}$ involutions contain one occurrence, and the remaining $I_n-I_{n-2}$ contain none. Therefore, for all $p \in \class{114}$,
\[
\sum_{\pi\in\mathcal I_n}q^{\operatorname{occ}_p(\pi)}
=(I_n-I_{n-2})+I_{n-2}q.
\]
Summing this formula over $n \ge 2$ yields the distribution for $F^{\mathcal I}_{\class{114}}(x,q)$ stated above.
\end{proof}

% \begin{theorem}\label{thm:154}
% All patterns in $\class{154}$ are equidistributed.
% Moreover,
% \[
% [x^2]F^{\mathcal I}_{\class{154}}(x,q)=1+q,
% \qquad
% [x^n]F^{\mathcal I}_{\class{154}}(x,q)=I_n
% \quad(n\geq3).
% \]
% \end{theorem}

\begin{theorem}\label{thm:154}
For every $n\geq0$, all patterns in $\class{154}$ are equidistributed
on $\mathcal I_n$. Moreover,
\[
F^{\mathcal I}_{\class{154}}(x,q)
=
\mathcal I(x)+(q-1)x^2.
\]
\end{theorem}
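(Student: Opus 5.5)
The formula says that the distribution on $\mathcal I_n$ is $I_n$ (no occurrences at all) for every $n\neq 2$. For $n=2$ it is $1+q$: exactly one of $12$ and $21$ contains a single occurrence, and the other contains none. I will prove this pointwise for each displayed representative of $\class{154}$, as in Theorems~\ref{thm:114} and~\ref{lem:endpoint-indicators}. I will not build a bijection. By the reduction after Theorem~\ref{thm:classification}, it suffices to treat one representative per symmetry orbit.

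\textbf{Small cases.} For $n\le 1$ there is no pair of points, so the coefficient is $1=I_n$. For $n=2$ the two involutions are $12$ and $21$. For each representative I will check directly that exactly one of them is an occurrence. This is automatic when the underlying pattern is $12$ (respectively $21$) and the whole pattern is realised by the $2$-point plot itself, since there are no other points to violate the shading. This yields $1+q=I_2+(q-1)$.

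\textbf{Main step ($n\ge 3$).} For each representative I will use the shading to pin down the occurrence. Suppose $a,b$ are the selected values at positions $i<j$. Since the representatives of this class are heavily shaded, the empty regions should force statements such as $i=1$, $j=n$, $\{a,b\}=\{1,n\}$, or that every other point of $\pi$ lies in one specified unshaded cell. Then I bring in involutivity: $\pos_\pi(v)=\pi_v$, so the plot is symmetric about the diagonal. Each forced point $(i,\pi_i)$ therefore produces its mirror point $(\pi_i,i)$.

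I expect the key mechanism to be the following. The single unshaded cell that has to absorb the remaining $n-2\ge 1$ points is not symmetric to itself under reflection in the diagonal, and its mirror image is shaded. Concretely, a remaining point $(k,\pi_k)$ in that cell has mirror $(\pi_k,k)$ in a shaded cell, which is a contradiction. Alternatively, the forced configuration might already demand two points in the same row or column. Either way, the case $n\ge 3$ gives no occurrence.

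\textbf{Obstacle.} The main difficulty is this case analysis for every representative. In each case I must identify which cell the other points are confined to and verify that its diagonal reflection is shaded, taking care with cells that straddle the diagonal. For these I will argue with coordinates, comparing $k$ with $a$ and $b$. Once every representative is shown to have no occurrences for $n\ge3$, summing over $n$ gives $F^{\mathcal I}_{\class{154}}(x,q)=\mathcal I(x)+(q-1)x^2$.
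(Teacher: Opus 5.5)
Your framework is the paper's. You show that no representative occurs in an involution of length $n\ge3$, and for $n=2$ each pattern is realised by exactly one of $12$ and $21$; your treatment of $n\le2$ is correct.

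\textbf{The gap.} The step that carries all the content is only announced. You never identify the representatives or check any of them, and the mechanism you predict is not the one that actually operates. The four representatives are:
\begin{itemize}
\item $12$ with all nine cells shaded;
\item $21$ with all nine cells shaded;
\item $21$ with every cell except $(1,2)$ shaded;
\item $21$ with every cell except $(0,1)$ and $(1,2)$ shaded.
\end{itemize}
So there is not always exactly one unshaded cell: two representatives have none, and the last has two. Moreover, in the third representative a leftover point of the unshaded cell $(1,2)$ can reflect back into $(1,2)$ itself. The argument ``a leftover point has its mirror in a shaded cell'' therefore cannot close that case. What is needed there is your fallback (two points forced into one row or column), or the mirror of a \emph{selected} point.

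\textbf{The missing verification.} It is short and uniform. In each of the three $21$-representatives, the bottom row, the right column, the corner cell $(0,2)$ and the central cell $(1,1)$ are shaded. So an occurrence at positions $i<j$ with values $a>b$ has $b=1$ and $j=n$. Hence $\pi_n=1$, and involutivity gives $\pi_1=n$. If the point $(1,n)$ were not selected, it would lie in the shaded cell $(0,2)$, since then $1<i$ and $n>a$. Therefore $i=1$ and $a=n$, and every other point lies in the shaded central cell, which forces $n=2$. For $12$ with all cells shaded, an occurrence forces $n=2$ directly.

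With this added, your proposal becomes a complete proof along the paper's lines. The paper handles the third representative slightly differently: it uses the shaded central cell to force $a=2$, and then derives a clash between $\pi_1=2$ and $\pi_n=1$.
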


\begin{proof}
The following four representatives can occur only when $n=2$:
\[
\begin{gathered}
p_1=
\pattern{scale=0.6}{2}{1/1,2/2}{0/0,1/0,2/0,0/1,1/1,2/1,0/2,1/2,2/2},
\qquad
p_2=
\pattern{scale=0.6}{2}{1/2,2/1}{0/0,1/0,2/0,1/1,2/1,0/2,2/2},
\qquad
p_3=
\pattern{scale=0.6}{2}{1/2,2/1}{0/0,1/0,2/0,0/1,1/1,2/1,0/2,2/2},
\qquad
p_4=
\pattern{scale=0.6}{2}{1/2,2/1}{0/0,1/0,2/0,0/1,1/1,2/1,0/2,1/2,2/2}.
\end{gathered}
\]

If an occurrence of $p_1$ or $p_4$ exists, the shading of all nine cells prohibits any unselected elements, immediately forcing $n=2$.
For $p_2$, the shading and involutivity jointly force the selected elements to be $(1,n)$ and $(n,1)$. Since the central cell is also shaded, no other elements can exist, yielding $n=2$.
If an occurrence of $p_3$ exists, the shading forces the selected elements to be $(1,2)$ and $(n,1)$. Involutivity on $(1,2)$ yields $(2,1)$, forcing $(2,1) = (n,1)$ and thus $n=2$.

Thus, for $n \ge 3$, these patterns never occur, giving the distribution $I_n$. For $n=2$, each pattern occurs in exactly one of the two involutions, yielding $1+q$. Thus, all four representatives are equidistributed.
\end{proof}

\begin{theorem}\label{thm:085}
For every $n\geq0$, all patterns in $\class{085}$ are equidistributed
on $\mathcal I_n$. Moreover,
\begin{equation}\label{eq:085-generating-function}
F^{\mathcal I}_{\class{085}}(x,q)
=
\mathcal I(x)
+(q-1)\left(
x\bigl(\mathcal I(x)-1\bigr)
+
\sum_{j\geq2}(j-2)!x^{2j}
\sum_{r\geq0}
\binom{r+j-1}{j-1}I_r x^r
\right).
\end{equation}
\end{theorem}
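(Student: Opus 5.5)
The plan is to follow the pattern of Theorems~\ref{thm:114} and~\ref{thm:154}. First I would show that for each displayed representative $p$ of $\class{085}$ and each $\pi\in\In$, $\occ_p(\pi)\in\{0,1\}$. Then
\[
\sum_{\pi\in\In}q^{\occ_p(\pi)}=I_n+(q-1)\,\#\{\pi\in\In:\occ_p(\pi)=1\},
\]
and it suffices to show that, for every representative, the generating function of involutions with an occurrence is the bracketed series in \eqref{eq:085-generating-function}. Reading off that series, the set of involutions with an occurrence should split into two disjoint families:
\begin{itemize}
\item a family counted by $x(\mathcal I(x)-1)$, namely $I_{n-1}$ involutions for each $n\ge2$, which I expect to be involutions with a forced fixed point at an extreme position (such as $\pi_1=1$ or $\pi_n=n$) together with a nonempty remainder;
\item a family counted by $\sum_{j\ge2}(j-2)!x^{2j}\sum_r\binom{r+j-1}{j-1}I_rx^r$.
\end{itemize}

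The key step is to take each representative and translate its shaded cells, via $\pos_\pi(a)=\pi_a$, into conditions on the transpositions of $\pi$. Each selected point $(i,\pi_i)$ has a mirror point $(\pi_i,i)$. Shading an empty region next to a point forces its mirror region to be empty as well. Iterating this should give a \emph{system of forced transpositions}: $j\ge2$ transpositions $(c_1\,d_1),\dots,(c_j\,d_j)$ whose relative order is prescribed at two ends and free in between. This should account for the factor $(j-2)!$, with the two extreme transpositions pinned by the selected points and the middle $j-2$ arranged arbitrarily.

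The remaining $r$ points must then lie in regions not constrained by the mesh. Involutivity forces each such point to stay in the same gap, and the points form an arbitrary involution on $r$ elements. The $r$ points are distributed among the $j$ gaps between consecutive forced values, which I expect gives the factor $\binom{r+j-1}{j-1}I_r$. The last step is to check that the two families are disjoint and exhaust all involutions with an occurrence, and that occurrences are unique: given $\pi$, the shading should determine the selected pair canonically, for instance as the extreme transposition.

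I expect the main obstacle to be the structural characterization in the second step, carried out separately for each representative in the orbit list. One must show that the diagonal-reflection closure of the shaded regions produces exactly a chain of transpositions with $(j-2)!$ free orderings and $j$ free gaps, and nothing else. For this I would argue inductively: start from the selected pair, reflect each empty shaded region across the diagonal, and observe that each reflection either terminates the chain or forces one new transposition. This reduces every representative to the same combinatorial data $(j,\text{ordering},\text{gap filling})$, and equidistribution across representatives follows because each yields the same count. As a sanity check I would verify the formula for $n\le5$ against direct enumeration of $\In$.
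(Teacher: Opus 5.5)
Your plan has the same skeleton as the paper's proof. Each representative occurs at most once, so the distribution is $I_n+(q-1)E_n$, and $E_n$ splits into a $\pi_1=1$ family of size $I_{n-1}$ and a forced-transposition family. But the proposal never supplies the characterization that carries the proof, and the mechanism you sketch for it is not the right one: nothing is iterated.

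For the representative $p$ with shaded cells $(0,0),(1,0),(0,1),(1,1),(2,1),(0,2)$, the fully shaded column $0$ and row $1$ pin any occurrence to the points $(1,a)$ and $(j,a+1)$, where $a=\pi_1$ and $j=\pi_{a+1}$. This gives uniqueness at once. The single cell $(1,0)$ then forces $\pi_i\ge a+2$ for $1<i<j$.
\begin{itemize}
\item When $a=1$ this condition is vacuous, giving $I_{n-1}$.
\item When $a>1$, the facts $\pi_a=1$ and $\pi_j=a+1$ force $a>j$. All $j-2$ transpositions $(i\,c_i)$ with $c_i\in\{a+2,\dots,n\}$ are then forced simultaneously.
\end{itemize}
For $p'$ (shaded $(0,0),(1,0),(2,0),(0,1),(2,1),(2,2)$), the pins are the value $1$ (row $0$) and the position $n$ (column $2$). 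So the occurrence is determined by $a=\pi_1$ and $b=\pi_n$. Cells $(0,0),(0,1)$ give $\pi_i>b$ for $i<a$, hence $a>b$ when $a>1$, and positions $2,\dots,b-1$ must pair with values in $\{a+1,\dots,n-1\}$.

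Your statement that involutivity keeps each free element in its own gap is false, and it contradicts the factor you are aiming for. Confinement would give $\sum_{g_1+\cdots+g_j=r}I_{g_1}\cdots I_{g_j}$ rather than $\binom{r+j-1}{j-1}I_r$. For example, take $n=6$, $j=2$, $a=4$: the involution $456123=(1\,4)(2\,5)(3\,6)$ contains $p$ and pairs the free labels $3$ and $6$ across the two gaps. For $n=6$, $j=2$, confinement yields $5$ involutions instead of the correct $\binom{3}{1}I_2=6$.

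In fact, once the $2j$ forced labels are removed, no shaded cell constrains the others, so they form an arbitrary involution on $n-2j$ labels. The binomial only records where the forced values sit, that is, the choice of $a$ and of $\{c_i\}$. The paper sums $(j-2)!\binom{n-a-1}{j-2}I_{n-2j}$ over $j+1\le a\le n-j+1$ by the hockey-stick identity. The count $(b-2)!\binom{n-a-1}{b-2}I_{n-2b}$ for $p'$ has the same summands and bounds after setting $b=j$, which is what gives equidistribution.
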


\begin{proof}
For the representatives
\[
p=
\pattern{scale=0.6}{2}{1/1,2/2}
{0/0,1/0,0/1,1/1,2/1,0/2}
\quad\text{and}\quad
p'=
\pattern{scale=0.6}{2}{1/1,2/2}
{0/0,1/0,2/0,0/1,2/1,2/2},
\]
each pattern has at most one occurrence. For $p$, the shading forces
the selected elements to $(1,a)$ and $(j,a+1)$. When \(a=1\), the element \(1\) is fixed, and deleting it leaves an arbitrary involution on the remaining \(n-1\) elements. Hence this case contributes \(I_{n-1}\).

Let $a>1$. Involutivity forces the transpositions $(1\,a)$ and
$(j\,\,a+1)$. The shading requires all elements in positions
$2,\ldots,j-1$ to exceed $a+1$; since $\pi_a=1$, this forces
$j<a$. The $j-2$ positions $2,\ldots,j-1$ must pair bijectively
with elements chosen from $a+2,\ldots,n$. Together with
$(1\,a)$ and $(j\,\,a+1)$, this fixes exactly $2j$ elements,
leaving an arbitrary involution on the remaining $n-2j$ elements.
Thus, for fixed $a$ and $j$, there are
\[
(j-2)!\binom{n-a-1}{j-2}I_{n-2j}
\]
choices. Here
$2\leq j\leq\lfloor n/2\rfloor,$
$j+1\leq a\leq n-j+1.$
By the hockey-stick identity,
\[
\sum_{a=j+1}^{n-j+1}
\binom{n-a-1}{j-2}
=
\binom{n-j-1}{j-1}.
\]
Hence the number of involutions containing $p$ is $E_0^{085}=E_1^{085}=0$, and for $n\ge2$,
\[
E_n^{085}
=
I_{n-1}
+
\sum_{j=2}^{\lfloor n/2\rfloor}
(j-2)!\binom{n-j-1}{j-1}I_{n-2j}.
\]

For $p'$, an occurrence is uniquely determined by
$a=\pi_1$ and $b=\pi_n$. The case $a=1$ again contributes
$I_{n-1}$. For $a>1$, involutivity forces $(1\,a)$ and $(b\,n)$,
and the $b-2$ positions $2,\ldots,b-1$ must pair with elements
chosen from $a+1,\ldots,n-1$. Thus, for fixed $a$ and $b$, there
are
\[
(b-2)!\binom{n-a-1}{b-2}I_{n-2b}
\]
choices. Setting $b=j$ gives the same summand and the same bounds
as for $p$. Hence $p'$ also occurs in exactly $E_n^{085}$
involutions.

Since either pattern occurs at most once,
\[
[x^n]F^{\mathcal I}_{\class{085}}(x,q)
=
(I_n-E_n^{085})+qE_n^{085}
=
I_n+(q-1)E_n^{085}.
\]
Moreover, summing over $n\geq2$ and setting $r=n-2j$ gives
\[
\sum_{n\geq2}E_n^{085}x^n
=
x\bigl(\mathcal I(x)-1\bigr)
+
\sum_{j\geq2}(j-2)!x^{2j}
\sum_{r\geq0}
\binom{r+j-1}{j-1}I_rx^r.
\]
Therefore
$F^{\mathcal I}_{\class{085}}(x,q)
=
\mathcal I(x)
+
(q-1)\sum_{n\geq2}E_n^{085}x^n,$
which gives \eqref{eq:085-generating-function}.
\end{proof}

\begin{theorem}\label{thm:250}
For every $n\geq0$, all patterns in $\class{250}$ are equidistributed
on $\mathcal I_n$. 
Moreover,
\begin{equation}\label{eq:E250}
F^{\mathcal I}_{\class{250}}(x,q)
=
\bigl(1+(q-1)x^2\mathcal P(x^2)\bigr)\mathcal I(x).
\end{equation}
\end{theorem}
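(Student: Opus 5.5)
We follow the strategy of Theorem~\ref{thm:085}. First we show that every representative of $\class{250}$ has at most one occurrence in any $\pi\in\In$. Then we count the involutions that contain an occurrence. Write $E^{250}_n$ for the number of $\pi\in\In$ containing the representative $p$. Then
\[
[x^n]F^{\mathcal I}_{\class{250}}(x,q)=I_n+(q-1)E^{250}_n,
\]
and \eqref{eq:E250} is equivalent to
\[
E^{250}_n=\sum_{m\geq0} m!\,I_{n-2-2m}
\qquad(n\geq2),
\]
with the convention $I_k=0$ for $k<0$. Indeed, $x^2\mathcal P(x^2)\mathcal I(x)=\sum_{m\ge0}m!\,x^{2m+2}\mathcal I(x)$. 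So the goal is to prove this finite sum separately for each displayed representative. The representatives are then equidistributed because each of them satisfies the same formula.

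For each representative, let the two selected points of an occurrence be given. We first read off from the shading that these points are extremal in a strong sense, for instance the first element, the last element, the value $1$, or the value $n$. This already gives uniqueness of the occurrence. Next we use $\pos_\pi(a)=\pi_a$ to convert each selected point into a forced transposition, or into a forced pair of positions and values. We expect this to yield one base transposition, which accounts for the factor $x^2$.

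The remaining shaded cells should then describe a band of $m$ positions, lying between the selected points, whose values must lie in a prescribed set of $m$ values on the opposite side of the diagonal. Involutivity turns this into $m$ further transpositions pairing those $m$ positions bijectively with those $m$ values. This gives $m!$ choices and fixes $2m$ elements in total. The remaining $n-2-2m$ elements are unconstrained, so they form an arbitrary involution, counted by $I_{n-2-2m}$. Summing over $m$ gives the formula for $E^{250}_n$.

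The main obstacle is the geometric step: verifying, case by case for each representative, that the shading together with symmetry about the diagonal forces exactly this ``base transposition plus a block of $m$ cross-paired transpositions'' structure. This includes checking that the two families of $m$ positions and $m$ values are disjoint. It also includes checking that no range constraints survive on $m$, unlike the binomial factors and bounds that appear in Theorem~\ref{thm:085}. If some representative instead produces a binomial-weighted sum as in $\class{085}$, we would try to rewrite it by a hockey-stick identity, or find a bijection to the block form. Finally, we would check the small cases $n\le 3$ directly and sum over $n$ to obtain \eqref{eq:E250}.
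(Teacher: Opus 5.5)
Your reduction is correct: since each representative occurs at most once, \eqref{eq:E250} is equivalent to $E^{250}_n=\sum_{m\ge0}m!\,I_{n-2-2m}$. This is the paper's $\sum_{a=1}^{\lfloor n/2\rfloor}(a-1)!\,I_{n-2a}$ with $m=a-1$, and your overall plan of counting the containing involutions separately for each representative is also the paper's plan. After that, however, the proposal is a forecast rather than a proof. You never fix the representatives of \class{250} or read off their shading, and you leave the ``geometric step'' open, with a fallback in case the count comes out differently. That step is the whole argument, so nothing specific to this class has been established, and two of the structural predictions you make are wrong.

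The paper uses $p$ with shaded cells $(0,0),(2,0),(1,1),(2,1),(0,2),(1,2)$ and $p'$ with shaded cells $(0,0),(1,0),(2,0),(0,1),(1,1),(2,2)$, both on $21$.

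\emph{First representative.} For $p$ the selected points are not extremal in any of the senses you list, so your route to uniqueness fails. What works is counting. Put $b>a$ at positions $i<j$. The positions before $i$ carry exactly the values strictly between $a$ and $b$, and the positions strictly between $i$ and $j$ carry exactly the values below $a$. Hence $i=b-a$ and $j=b$. Then $\pi_b=a$ gives $\pi_a=b=\pi_i$, so $i=a$ and $b=2a$, which yields $(a-1)!\,I_{n-2a}$ involutions. Uniqueness then needs a separate incompatibility check. Occurrences at $a<a'$ would force $2a=\pi_a\in[a'+1,2a'-1]$, hence $a'\in[a+1,2a-1]$. Every position in that range carries a value below $a$, contradicting $\pi_{a'}=2a'$.

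\emph{Second representative.} For $p'$ the smaller selected value is $1$, so $j=\pi_1$, and counting gives $b+j=n+2$ with $b\le j$. The cross-paired block is $[2,b-1]\leftrightarrow[j+1,n]$, which in general does not lie between the selected positions. The other selected point $(i,b)$ is \emph{not} forced: it lies in $[b,j-1]$, which carries an arbitrary involution, with $i=\pi_b$. So your step of turning each selected point into a forced transposition is wrong here, since only $(1\,j)$ is forced. The count $(b-2)!\,I_{j-b}=(a-1)!\,I_{n-2a}$, with $a=b-1$, appears only once the free part is allowed to absorb the point $b$.
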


\begin{proof}
For the representatives $p$ and $p'$ given by
\[
p=
\pattern{scale=0.6}{2}{1/2,2/1}{0/0,2/0,1/1,2/1,0/2,1/2}
\qquad\text{and}\qquad
p'=
\pattern{scale=0.6}{2}{1/2,2/1}{0/0,1/0,2/0,0/1,1/1,2/2},
\]

Let the inversion $b>a$ occur at positions $i<j$. For $\pi$,
the shading forces the values between $a$ and $b$ to appear before position $i$, yielding $i=b-a$.
Furthermore, counting the remaining elements constrained by the shaded regions to the left and right yields the second position index $j = b$. By involutivity, $\pi_i=b$ and $\pi_b=a$ then reduce to $i=a$ and $b=2a$. The intervals $[1,a-1]$ and $[a+1,2a-1]$ can be paired by an arbitrary bijection, $a$ pairs with $2a$, and the remaining $n-2a$ elements support an arbitrary involution, yielding $(a-1)!I_{n-2a}$ involutions. Distinct values of $a$ are mutually incompatible, so $p$ occurs at most once.

For $p'$, the smaller value is $1$. Let $j=\pi_1$ and let $b$ be the larger selected value. The shading forces $b+j=n+2$ with $b\le j$. The sets $[2,b-1]$ and $[j+1,n]$ pair arbitrarily, while $[b,j-1]$ supports an arbitrary involution. Setting $a=b-1$, this counts $(b-2)!I_{j-b} = (a-1)!I_{n-2a}$ involutions. Since $j$ uniquely determines $b$, $p'$ also occurs at most once. 

Let
$E_n^{250}
=
\sum_{a=1}^{\lfloor n/2\rfloor}
(a-1)!I_{n-2a}.$
Since each pattern occurs at most once, there are $I_n-E_n^{250}$
involutions avoiding the pattern and $E_n^{250}$ involutions containing it
exactly once. Therefore,
$F^{\mathcal I}_{\class{250}}(x,q)
=
\sum_{n\ge0}{(I_n+(q-1)E_n^{250})x^n}$.
% Interchanging the order of summation and setting $m=n-2a$, we obtain
% \begin{align*}
% \sum_{n\ge0}E_nx^n
% &=\sum_{n\ge0}\sum_{a=1}^{\lfloor n/2\rfloor}(a-1)!I_{n-2a}x^n\\
% &=\sum_{a\ge1}(a-1)!x^{2a}\sum_{m\ge0}I_mx^m\\
% &=\mathcal I(x)x^2\sum_{r\ge0}r!(x^2)^r
% =x^2\mathcal P(x^2)\mathcal I(x),
% \end{align*}
Interchanging the order of summation, we obtain
$$\sum_{n\ge0}E_n^{250}x^n
=\sum_{n\ge0}\sum_{a=1}^{\lfloor n/2\rfloor}(a-1)!I_{n-2a}x^n
=x^2\mathcal P(x^2)\mathcal I(x).$$ Therefore,
\[
F^{\mathcal I}_{\class{250}}(x,q)
=\sum_{n\ge0}\bigl(I_n+(q-1)E_n^{250}\bigr)x^n
=\bigl(1+(q-1)x^2\mathcal P(x^2)\bigr)\mathcal I(x).
\qedhere
\]

\end{proof}

\section{Equidistributions via diagonal symmetry}
\label{sec:involution-plot-symmetry}

In this section, we use the symmetry of involution plots about the
main diagonal to prove four further equidistributions.

\begin{theorem}\label{thm:176}
For every $n\geq0$, all patterns in $\class{176}$ are equidistributed
on $\mathcal I_n$. Moreover,
\begin{equation}\label{eq:176-continued-fraction}
F^{\mathcal I}_{\class{176}}(x,q)
=
\cfrac{1}{
1-x-\cfrac{qx^2}{
1-x-\cfrac{(q+1)x^2}{
1-x-\cfrac{(q+2)x^2}{
1-x-\ddots}}}}.
\end{equation}
\end{theorem}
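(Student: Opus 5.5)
The plan is to reduce every representative of $\class{176}$ to one statistic on the transpositions of an involution, and then read off the continued fraction from Flajolet's Motzkin-path encoding. As in the previous sections, I would first fix one representative from each symmetry orbit of Table~\ref{tab:pattern-guide} and determine, using the shading, which pairs of points can form an occurrence. I expect each occurrence to be a $21$-occurrence whose two points are forced to be the pair $(i,j),(j,i)$ of a single $2$-cycle $(i\,j)$ with $i<j$. The mechanism I have in mind is diagonal symmetry. A shaded cell containing part of the main diagonal, together with its reflection, should force the two selected points to be mirror images of each other. Once this holds, an occurrence is the same thing as a transposition satisfying an emptiness condition.

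Next I would translate the remaining shaded cells into a condition on the arc $(i,j)$. By reflection, any forbidden point in the region determined by $i<k<j$ comes paired with its mirror image. My guess is that, for every representative, the condition becomes: no arc $(k,\ell)$ with $k<i<j<\ell$ exists, i.e.\ $(i,j)$ is not nested inside another arc. Different representatives should express this through different (mirror-image) shaded regions. This yields, for every $\pi\in\In$, $\occ_p(\pi)=\operatorname{out}(\pi)$, the number of outer arcs, with the same value for all representatives, which proves the equidistribution.

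For the formula, I would use the standard bijection between involutions and weighted Motzkin paths. A fixed point gives a level step of weight $1$, an opener gives an up step, and a closer at height $h$ gives a down step that chooses which of the $h$ open arcs to close. I would order the open arcs by their openers. The arc being closed is an outer arc exactly when the choice is the earliest open opener, and this option has weight $q$. Every down step from height $h$ therefore has weight $q+h-1$, and Flajolet's theorem~\cite{Flajolet} gives \eqref{eq:176-continued-fraction} immediately. Alternatively, the formula can be obtained as a specialization of \cite[Theorem~3.5]{BarnabeiMaxima}, as was done for Theorem~\ref{thm:033}.

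The main obstacle is the first step. I must check, orbit by orbit, that the shading really forces the selected points to be a symmetric pair. I must also check that a non-symmetric $21$-occurrence cannot survive, since such an occurrence would involve two different $2$-cycles. If some representative instead counts closers that pick the most recent opener (arcs not crossing, or with nothing nested inside), the same argument still works. In that case I would only change which open arc carries the weight $q$, since the continued fraction depends only on one of the $h$ choices being marked.
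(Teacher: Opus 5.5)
Your target identity is correct: every pattern in $\class{176}$ has as many occurrences in $\pi\in\In$ as $\pi$ has outer arcs. The mechanism you propose for it is false, though, so the step you flag as the main obstacle cannot be carried out.

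\textbf{The counterexample.} The two orbits are represented by $p=(21,\{(2,0),(0,1),(1,1)\})$ and $p'=(21,\{(1,0),(2,0),(0,2)\})$. Neither shading contains the set $T$ of Lemma~\ref{lem:transpose-invariance}, and nothing forces the selected points to be mirror images. Take $\pi=3412=(1\,3)(2\,4)$, with points written as (position, value).
\begin{itemize}
\item The occurrences of $p$ are $\{(1,3),(3,1)\}$ and $\{(1,3),(4,2)\}$. The mirror pair $\{(2,4),(4,2)\}$ is not an occurrence, because $(1,3)$ lies in the shaded cell $(0,1)$.
\item The occurrences of $p'$ are $\{(1,3),(3,1)\}$ and $\{(2,4),(3,1)\}$.
\end{itemize}
Since $3412$ is fixed by inverse and reverse-complement, every pattern in the class has a non-mirror occurrence in it, so no choice of representative rescues the argument. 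Non-symmetric occurrences do survive, and an occurrence is not ``a transposition satisfying an emptiness condition.'' Note also that $p$ and $p'$ have different occurrence sets here; only their counts agree.

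\textbf{The missing idea} is to attach each occurrence to a single distinguished point rather than to the pair. For $p$, the cells $(2,0)$, $(0,1)$, $(1,1)$ say exactly two things. The lower selected point $b$ is a right-to-left minimum with a larger element to its left. Its partner is the smallest element to the left of $b$ exceeding $b$, which is not in general its mirror. Conversely, every such $b$ gives exactly one occurrence. Involutivity then identifies these points:
\begin{itemize}
\item A fixed point $k$ never qualifies, since any $\pi_i>k$ with $i<k$ produces the point $(\pi_i,i)$ to its lower right.
\item The larger endpoint $j$ of an arc $(i\,j)$, sitting at position $i$, never qualifies, since its mirror $(j,i)$ lies to its lower right.
\item The smaller endpoint $i$, sitting at position $j$, qualifies if and only if no arc $(k\,\ell)$ has $k<i<j<\ell$.
\end{itemize}
For $p'$, occurrences correspond to left-to-right maxima with a smaller element to their right. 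Diagonal reflection maps these onto the former points, so $\occ_{p'}(\pi)=\occ_p(\pi)$.

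With this replacement, the rest of your plan works. In the Motzkin history, the closed arc is outer exactly when it is the earliest open arc. This gives $\lambda_h=q+h-1$, and \eqref{eq:176-continued-fraction} follows directly from Flajolet's theorem. That is a self-contained alternative to the paper's specialization of \cite[Theorem~3.5]{BarnabeiMaxima} through Biane's labels.
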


\begin{proof}
For the representatives
\[
p=
\pattern{scale=0.6}{2}{1/2,2/1}{2/0,0/1,1/1}
\qquad\text{and}\qquad
p'=
\pattern{scale=0.6}{2}{1/2,2/1}{1/0,2/0,0/2},
\]
we show that any involution contains the same number of occurrences of $p$ and $p'$. Let $\pi\in\In$. For $p$, suppose that $ab$, with $a>b$, is an occurrence. The shading forces $b$ to be a right-to-left minimum having a larger element to its left. Moreover, the element $a$ is uniquely determined as the smallest element to the left of $b$ that is larger than $b$. Thus each right-to-left minimum having a larger element to its left contributes exactly one occurrence of $p$. Similarly, for $p'$, the element $a$ in an occurrence $ab$ must be a left-to-right maximum having a smaller element to its right. The element $b$ is uniquely determined as the smallest element to the right of $a$. Thus each left-to-right maximum having a smaller element to its right contributes exactly one occurrence of $p'$. 

Since $\pi$ is an involution, its plot is symmetric about the main diagonal. Reflection in the main diagonal sends a left-to-right maximum having a smaller element to its right to a right-to-left minimum having a larger element to its left, and vice versa. Hence $\pi$ contains the same number of occurrences of $p$ and $p'$.
Thus, for every $\pi\in\mathcal I_n$,
\[
\occ_p(\pi)=\occ_{p'}(\pi)
=
\#\{b:\ b\text{ is a right-to-left minimum having a larger
element to its left}\}.
\]

It remains to prove \eqref{eq:176-continued-fraction}. Let $(a\ b)$
be a $2$-cycle of $\pi$, with $a<b$, so that $\pi_b=a$. The element
$a$ is a right-to-left minimum if and only if there is no other
$2$-cycle $(c\ d)$ such that $c<a<b<d$. Indeed, such a cycle gives
$\pi_d=c<a=\pi_b$. Conversely, if $a$ is not a right-to-left
minimum, then some $d>b$ satisfies $\pi_d=c<a$, and involutivity
gives the $2$-cycle $(c\ d)$ with $c<a<b<d$.
Moreover, the larger element $b$ lies to the left of
$a$. Hence the common number of occurrences equals the number of $2$-cycles whose arcs are not contained in another arc.

Under Biane's bijection between involutions and labeled Motzkin paths, a $2$-cycle $(a,b)$, with $a<b$, is represented by a down step. The label of this down step is counted from $1$: it is equal to
$1+\#\{(c,d): c<a<b<d\}.$
Thus label $1$ occurs exactly when no arc properly contains $(a,b)$. Therefore the $2$-cycles counted above are exactly the down steps of label $1$. Barnabei et al.~\cite[Theorem~3.5]{BarnabeiMaxima} give the continued fraction in which the variable $y$ records down steps of label $1$, while $t$ records horizontal steps at height $0$. Setting $y=q$ and $t=1$ gives \eqref{eq:176-continued-fraction}.
\end{proof}

% \begin{remark}\label{rem:176-recurrence}
% Let $k_n(q)=[x^n]F^{\mathcal I}_{\class{176}}(x,q)$ be the
% distribution polynomial for $\class{176}$ on $\mathcal I_n$.
% Besides the continued fraction in Theorem~\ref{thm:176}, these
% polynomials satisfy $k_0(q)=k_1(q)=1$ and, for $n\geq2$,
% $$
% k_n(q)
% =
% k_{n-1}(q)
% +(n-1)k_{n-2}(q)
% +(q-1)\sum_{j=0}^{n-2}
% k_j(q)k_{n-2-j}(q).
% $$
% Equivalently, if
% $F=F^{\mathcal I}_{\class{176}}(x,q)$, then this recurrence is
% obtained by comparing coefficients in the identity
% $F=1+xF+x^2F+x^3\frac{\partial F}{\partial x}+(q-1)x^2F^2$.
% In particular, setting $q=1$ reduces the recurrence to
% $k_n(1)=k_{n-1}(1)+(n-1)k_{n-2}(1)$, the usual recurrence for
% involution numbers.
% \end{remark}

\begin{lemma}
\label{lem:transpose-invariance}
Let $T=\{(1,0),(2,0),(2,1),(0,2)\}$, and let $p=(21,R)$ be a
mesh pattern with $T\subseteq R$.
If $(i,j)\in R$, then $p$ and
$
(21,R\cup\{(j,i)\})
$
have the same occurrences in every involution.
\end{lemma}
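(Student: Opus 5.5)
The plan is to show that, because $T\subseteq R$, every occurrence of $p$ in an involution is a single $2$-cycle $(a\,b)$ drawn symmetrically about the main diagonal. Once that holds, reflection in the diagonal, which fixes the plot of $\pi$, maps each cell region of the occurrence onto its transposed region. Hence cell $(i,j)$ is empty exactly when cell $(j,i)$ is empty. It then follows that $p$ and $(21,R\cup\{(j,i)\})$ have identical occurrence sets, since adding a shaded cell that is automatically empty changes nothing.

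\textbf{Setup.} Let $\pi\in\In$, and let an occurrence of $p$ sit at positions $s<t$ with values $\pi_s=b>a=\pi_t$. Column $0$ is positions $<s$, column $1$ is positions strictly between $s$ and $t$, and column $2$ is positions $>t$. Similarly, row $0$ is values $<a$, row $1$ is values strictly between $a$ and $b$, and row $2$ is values $>b$.

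\textbf{Step 1: $a\le s$.} Cells $(1,0)$ and $(2,0)$ are empty, so all $a-1$ values smaller than $a$ lie in column $0$. Column $0$ has $s-1$ positions, which gives $a-1\le s-1$.

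\textbf{Step 2: $b\le t$.} Cells $(2,0)$ and $(2,1)$ are empty, so every position after $t$ carries a value exceeding $b$. There are $n-t$ such positions and only $n-b$ values exceeding $b$, which gives $n-t\le n-b$.

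\textbf{Step 3: $a=s$ and $b=t$, via involutivity.} Since $\pi_t=a$, involutivity gives $\pi_a=t$. Suppose $a<s$. Then position $a$ lies in column $0$, and cell $(0,2)$ is empty, so $t=\pi_a\le b$. Equality is impossible because the value $b$ sits at position $s\neq a$. So $t<b$, contradicting Step 2. Together with Step 1 this forces $a=s$. Then $\pi_s=b$ gives $\pi_b=s=a$, while $\pi_a=t$; therefore $t=b$. Consequently the two selected points are $(a,b)$ and $(b,a)$, forming the transposition $(a\,b)$.

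\textbf{Step 4: conclude by reflection.} The column boundaries $s,t$ now equal the row boundaries $a,b$. Reflection $(x,y)\mapsto(y,x)$ preserves the plot of $\pi$ and swaps the two selected points. It therefore carries the set of points in cell region $(i,j)$ bijectively onto the set in region $(j,i)$. So region $(i,j)$ is empty if and only if region $(j,i)$ is empty, and in particular the shading $(i,j)\in R$ already forces $(j,i)$ to be empty.

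For the converse direction, every occurrence of the enlarged pattern is trivially an occurrence of $p$. Hence the two patterns have the same occurrences in every involution, as claimed.

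The main obstacle is Step 3, which pins down $s=a$ and $t=b$. The counting in Steps 1–2 only gives inequalities, and involutivity is used through $\pi_a=t$ together with the emptiness of cell $(0,2)$ to close the gap.
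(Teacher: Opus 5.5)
Your proof is correct and follows essentially the same route as the paper: show that every occurrence is a transposition $(a\,b)$ placed symmetrically about the main diagonal, then use reflection to identify the contents of cell $(i,j)$ with those of cell $(j,i)$. The difference is only in how you reach $s=a$ and $t=b$. You use the counting inequalities $a\le s$ and $b\le t$ plus one involutivity step with cell $(0,2)$. The paper instead splits into cases according to whether the smaller selected value is less than or greater than the first selected position, and places the reflected selected points in the shaded cells $(0,2)$, $(2,1)$ or $(1,0)$, $(2,0)$.
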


\begin{proof}
Let
$
p'=(21,R\cup\{(j,i)\}),
$
where $(i,j)\in R$. Clearly, every occurrence of $p'$ is an occurrence of $p$. Conversely, let \(\pi\in\mathcal I_n\), and suppose that positions $r<s$, with values
$a=\pi_r>b=\pi_s$, form an occurrence of $p$.
Since $\pi$ is
an involution, $(a,r)$ and $(b,s)$ also belong to its plot. We claim
that $b=r$.
Suppose first that $b<r$. If $s>a$, then $(b,s)$ lies in the shaded
cell $(0,2)$; if $s<a$, then $(a,r)$ lies in the shaded cell $(2,1)$;
and if $s=a$, involutivity gives $b=r$. Thus $b<r$ is impossible.
Similarly, if $b>r$, then $(a,r)$ lies in the shaded cell $(1,0)$
when $a<s$, and in $(2,0)$ when $a>s$; if $a=s$, involutivity again
gives $b=r$. Hence $b=r$.

Since $\pi$ is an involution, $b=r$ also gives $a=s$. Thus the two
selected points are $(r,s)$ and $(s,r)$, so reflection in the main
diagonal interchanges the cells $(i,j)$ and $(j,i)$.
Since $(i,j)$ is shaded, the corresponding cell contains no point of the plot of $\pi$. By symmetry about the main diagonal, the transposed cell $(j,i)$ also contains no point of the plot. Hence the given occurrence of $p$ is also an occurrence of $p'$.
\end{proof}

\begin{theorem}\label{thm:forcing-classes}
For every $n\geq0$, all patterns within each of the classes
$\class{218}$, $\class{219}$, and $\class{228}$ are equidistributed
on $\mathcal I_n$. Moreover,
\begin{align}
F^{\mathcal I}_{\class{218}}(x,q)
&=
\frac{\mathcal I(x)}
     {1-(q-1)x^2\mathcal I(x)^2},
\label{eq:218-distribution}\\
F^{\mathcal I}_{\class{219}}(x,q)
&=
\mathcal I(x)+(q-1)x^2\mathcal I(x)^2,
\label{eq:219-distribution}\\
F^{\mathcal I}_{\class{228}}(x,q)
&=
\frac{\mathcal I(x)}
     {1-(q-1)x^2\mathcal I(x)}.
\label{eq:228-distribution}
\end{align}
\end{theorem}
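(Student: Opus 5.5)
The plan is to show that every pattern in these three classes is a pattern $(21,R)$ whose shaded set $R$ contains the set $T$ of Lemma~\ref{lem:transpose-invariance}. The proof then has two parts: equidistribution within each class, and the generating functions.

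\textbf{Step 1: equidistribution within each class.} For each class, I first check from Table~\ref{tab:pattern-guide} that every representative has underlying pattern $21$ and shading containing $T$. I then apply Lemma~\ref{lem:transpose-invariance} repeatedly. Adding the transposed cell $(j,i)$ of every shaded cell $(i,j)$ does not change the set of occurrences in any involution. So each representative has the same occurrences as its diagonally symmetric closure $\bar R=R\cup R^{-1}$. I expect the representatives within one class to have the same closure $\bar R$, possibly only after also applying an $rc$ or inverse symmetry. If so, they have identical occurrence sets on every $\pi\in\In$, which is stronger than equidistribution. The main risk is that some representative fails to contain all of $T$. In that case I would first apply $p\mapsto p^{-1}$ or $p\mapsto p^{rc}$, using Lemma~\ref{lem:involution-symmetries}, to bring $T$ into the shading. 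Failing that, I would argue directly, as in the proof of the lemma, that the selected points are forced onto a $2$-cycle $(r\,s)$ with $b=r$ and $a=s$.

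\textbf{Step 2: the shape of an occurrence.} By the proof of Lemma~\ref{lem:transpose-invariance}, every occurrence consists of the two points of a $2$-cycle $(r\,s)$ with $r<s$. The shaded cells of $T$ force further constraints. Nothing lies to the right of position $s$ with value below $r$. Nothing lies between the two columns with value below $r$. Nothing with value above $s$ lies to the left of position $r$, together with the reflected conditions. Combining these with the extra cells of each class, I expect the following descriptions.
\begin{itemize}
\item In $\class{228}$, an occurrence is a $2$-cycle $(r\,s)$ such that $\{1,\dots,r-1\}$ and $\{s+1,\dots,n\}$ are direct-sum cuts, and the interior values $r+1,\dots,s-1$ form an arbitrary involution. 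In other words, $\pi=\sigma\oplus(1\ominus\rho\ominus1)\oplus\tau$, and the occurrence is a sum-component of the form $1\ominus\rho\ominus 1$ with $\rho$ an involution.
\item In $\class{218}$, the extra cells are relaxed, so one side of the occurrence may carry an additional involution attached to the $2$-cycle. This accounts for the factor $x^2\mathcal I(x)^2$.
\item In $\class{219}$, the extra shading pins the cycle to one end, for example $r=1$. Hence there is at most one occurrence.
\end{itemize}

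\textbf{Step 3: counting.} I would use the substitution $q\mapsto 1+(q-1)$. By inclusion--exclusion, $F_p^{\mathcal I}(x,q)$ equals the sum over involutions carrying a chosen set of $k$ marked occurrences, weighted by $(q-1)^k$. For $\class{228}$, marked occurrences are disjoint sum-components. An involution with $k$ marked components decomposes uniquely as
\[
\tau_0\oplus\mu_1\oplus\tau_1\oplus\cdots\oplus\mu_k\oplus\tau_k,
\]
where the gaps are arbitrary involutions and each marked piece $\mu_i$ contributes $x^2\mathcal I(x)$. A gap that is itself empty or decomposes further is still just an involution, because the pieces between markers are unrestricted. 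This gives $\mathcal I(x)\sum_k\bigl((q-1)x^2\mathcal I(x)\bigr)^k$, which is \eqref{eq:228-distribution}. The same marking argument, with each marked piece weighted $x^2\mathcal I(x)^2$, gives \eqref{eq:218-distribution}. For $\class{219}$, since there is at most one occurrence, $k\le1$, which gives \eqref{eq:219-distribution}.

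\textbf{Main obstacle.} The delicate step is verifying that the marked decomposition is unique. The occurrence structures must not overlap ambiguously; in particular, a marked piece must not be absorbable into an adjacent involution gap. I would check this against the distributions for $n\le 5$ before writing it up.
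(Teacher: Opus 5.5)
Your Step~1 is the paper's argument: the displayed representatives (strictly, only the orbit representatives, with the rest handled by Lemma~\ref{lem:involution-symmetries}) are $(21,R)$ with $T\subseteq R$. Within a row they share the same diagonal closure, so Lemma~\ref{lem:transpose-invariance} gives identical occurrence sets. The gap is in Step~2, where the occurrence structures for $\class{228}$ and $\class{218}$ are misidentified. The cells of $T$, together with their reflections $(0,1)$ and $(1,2)$, force an occurrence to be a $2$-cycle $(r\,s)$ with three properties: $\{\pi_1,\dots,\pi_{r-1}\}=[r-1]$, every interior value lies in $(r,s)$, and every later value exceeds $s$. Hence $[r,s]$ is a single sum-component $1\ominus\rho\ominus1$ with $\rho$ an arbitrary involution, of weight $x^2\mathcal I(x)$. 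This is the description of $\class{218}$, whose first representative is shaded by exactly $T$. Nothing there is ``relaxed'', and no piece of weight $x^2\mathcal I(x)^2$ arises. In $\class{228}$ the extra center cell $(1,1)$ empties the interior, so $s=r+1$ and the marked component is exactly $21$, of weight $x^2$. For example, $321$ has no $\class{228}$-occurrence because its fixed point $2$ lies in that cell. So the $n=3$ distribution is $2+2q$, not the $1+3q$ your description would give.

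Step~3 then contains a compensating algebra slip. An involution $\tau_0\oplus\mu_1\oplus\tau_1\oplus\cdots\oplus\mu_k\oplus\tau_k$ with $k$ marked components has $k+1$ free gaps. It therefore contributes $(q-1)^k\mathcal I(x)^{k+1}D(x)^k$, not $\mathcal I(x)\bigl((q-1)D(x)\bigr)^k$, and summing gives $\mathcal I(x)/\bigl(1-(q-1)D(x)\mathcal I(x)\bigr)$. With the correct pieces, $D=x^2\mathcal I(x)$ yields \eqref{eq:218-distribution} and $D=x^2$ yields \eqref{eq:228-distribution}. Your wrong pieces, combined with the dropped factor $\mathcal I(x)^k$, reach the stated formulas only by cancellation. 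Evaluated correctly, your pieces give wrong answers: for $\class{218}$ at $n=3$ they would put every involution, including $123$, at exactly one occurrence. For $\class{219}$, the cell $(0,0)$ forces $r=1$. The single marked piece is then a first component $1\ominus\rho\ominus1$ followed by an arbitrary involution, contributing $(q-1)x^2\mathcal I(x)^2$.

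Once corrected, your $(q-1)^k$-marking count is a valid alternative to the paper's weighted component sequence $1/\bigl(1-Y(x)-(q-1)D(x)\bigr)$ with $Y=1-\mathcal I^{-1}$; the two expressions coincide. The uniqueness you flag as the main obstacle is immediate, because the marked pieces are sum-indecomposable components of the direct-sum decomposition.
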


\begin{proof}
For the representatives displayed in the following rows, we prove the
stronger pointwise statement that the occurrence sets agree on every
involution:
\[
\begin{array}{c|l}
\class{218}
&
\pattern{scale=0.6}{2}{1/2,2/1}{1/0,2/0,2/1,0/2}
\pspace
\pattern{scale=0.6}{2}{1/2,2/1}{1/0,2/0,0/1,2/1,0/2}
\pspace
\pattern{scale=0.6}{2}{1/2,2/1}{1/0,2/0,0/1,2/1,0/2,1/2}
\\[2mm]

\class{219}
&
\pattern{scale=0.6}{2}{1/2,2/1}{0/0,1/0,2/0,2/1,0/2}
\pspace
\pattern{scale=0.6}{2}{1/2,2/1}{0/0,1/0,2/0,0/1,2/1,0/2}
\pspace
\pattern{scale=0.6}{2}{1/2,2/1}{0/0,1/0,2/0,2/1,0/2,1/2}
\pspace
\pattern{scale=0.6}{2}{1/2,2/1}{0/0,1/0,2/0,0/1,2/1,0/2,1/2}
\\[2mm]

\class{228}
&
\pattern{scale=0.6}{2}{1/2,2/1}{1/0,2/0,1/1,2/1,0/2}
\pspace
\pattern{scale=0.6}{2}{1/2,2/1}{1/0,2/0,0/1,1/1,2/1,0/2}
\pspace
\pattern{scale=0.6}{2}{1/2,2/1}{1/0,2/0,0/1,1/1,2/1,0/2,1/2}.
\end{array}
\]
In each row, the shading of the first representative contains the set
$T$ of Lemma~\ref{lem:transpose-invariance}, and every other
representative is obtained by adding one or both of the cells $(0,1)$
and $(1,2)$, the transposes of the already shaded cells $(1,0)$ and
$(2,1)$. Lemma~\ref{lem:transpose-invariance} therefore shows that
these additions do not change the occurrence set on involutions.
Thus the representatives in each row have identical occurrence sets.

It remains to determine the three distributions. Let $Y(x)$ be the
ordinary generating function for a nonempty sum-indecomposable
involution. Since every involution is uniquely a direct sum of such
components, $\mathcal I(x)=1/(1-Y(x))$, so
\[
Y(x)=1-\mathcal I(x)^{-1},
\qquad
D_{218}(x)=x^2\mathcal I(x),
\qquad
D_{228}(x)=x^2.
\]
Here $D_{218}(x)$ and $D_{228}(x)$ enumerate the components that
produce an occurrence.

For $\class{218}$, Lemma~\ref{lem:transpose-invariance} forces the selected
positions $i<j$ to form the $2$-cycle $(i\,j)$. The shading and its
reflection imply that all elements before $i$ are smaller than $i$,
all elements between $i$ and $j$ have values strictly between $i$ and
$j$, and all elements after $j$ are larger than $j$. Hence the
interval $[i,j]$ is a single direct-sum component. After
standardization, this component is obtained by surrounding an
arbitrary involution $\tau$ with an outer $2$-cycle; explicitly, it
has the form
$(m+2)(\tau_1+1)\cdots(\tau_m+1)1$ for some
$\tau\in\mathcal I_m$. Conversely, every such component gives
exactly one occurrence. This explains
$D_{218}(x)=x^2\mathcal I(x)$.

For $\class{219}$, the extra shaded cell $(0,0)$ forces $i=1$.
Consequently, the distinguished component must be the first
direct-sum component, and it is again of the type counted by
$D_{218}(x)$. Hence an involution has at most one occurrence in this
class.

For $\class{228}$, the extra shaded cell $(1,1)$ forces $j=i+1$.
Thus the distinguished direct-sum component is exactly $21$, giving
$D_{228}(x)=x^2$.

We now mark each distinguished component by $q$. If components of a
certain type have generating function $D(x)$, replacing their weight
$1$ by $q$ changes the component series from $Y(x)$ to
$Y(x)+(q-1)D(x)$. Therefore, for $\class{218}$ and $\class{228}$ we
simply form an arbitrary sequence of weighted components. For
$\class{219}$ only the first component is marked, while the remaining
components form an arbitrary involution. Hence
\[
\begin{aligned}
F^{\mathcal I}_{\class{218}}(x,q)
&=
\frac{1}{1-Y(x)-(q-1)D_{218}(x)}
=
\frac{\mathcal I(x)}
     {1-(q-1)x^2\mathcal I(x)^2},
\\
F^{\mathcal I}_{\class{219}}(x,q)
&=
1+\bigl(Y(x)+(q-1)D_{218}(x)\bigr)\mathcal I(x)
=
\mathcal I(x)+(q-1)x^2\mathcal I(x)^2,
\\
F^{\mathcal I}_{\class{228}}(x,q)
&=
\frac{1}{1-Y(x)-(q-1)D_{228}(x)}
=
\frac{\mathcal I(x)}
     {1-(q-1)x^2\mathcal I(x)}.
\end{aligned}
\]
This completes the proof.
\end{proof}

% ============================

\section{Equidistributions via bijections}
\label{sec:involution-preserving-bijections}

In this section, the required equidistributions are proved by explicit
bijections on involutions.

% ==============================

\subsection{Conjugation Bijections for Classes INV-064 and INV-213}

For $n\geq1$, let $g_n\in\In$ be defined by
\[
g_n(k) =
\begin{cases}
  1, & \text{if } k=1, \\
  n+2-k, & \text{if } 2 \leq k \leq n.
\end{cases}
\]
We define $\Gamma_n\colon\In\to\In$ by $\Gamma_n(\pi)=g_n\pi g_n^{-1}=g_n\pi g_n$.
For every $\pi\in\In$, we have $\Gamma_n(\pi)^{-1} = g_n\pi^{-1}g_n = g_n\pi g_n = \Gamma_n(\pi)$, so $\Gamma_n(\pi)$ is an involution.

\begin{theorem}\label{thm:064}
For every $n\geq0$, all patterns in $\class{064}$ are equidistributed
on $\mathcal I_n$. Moreover, with $R(x,q)$ as in
Lemma~\ref{lem:R-recurrence},
\[
F^{\mathcal I}_{\class{064}}(x,q)
=
(1-x)\mathcal I(x)+xR(x,q).
\]
\end{theorem}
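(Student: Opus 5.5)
The plan is to reduce every representative of $\class{064}$ to a statistic on a smaller involution, and then match the representatives using the conjugation $\Gamma_n$ together with Lemma~\ref{lem:R-recurrence}. Reading off the formula first: for $n\ge1$,
\[
[x^n]\bigl((1-x)\mathcal I(x)+xR(x,q)\bigr)=(I_n-I_{n-1})+R_{n-1}(q).
\]
So I expect each representative to have the following shape. It has no occurrence unless the plot contains a forced fixed point, which for the representatives I would pick is $\pi_1=1$ (or symmetrically $\pi_n=n$). Such a fixed point leaves an arbitrary involution $\pi'\in\mathcal I_{n-1}$ after deletion. The number of occurrences is then one of the statistics $\lmin(\pi')$ or $\rmax(\pi')$. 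This matches the $I_{n-1}$ involutions with $\pi_1=1$ carrying weight $R_{n-1}(q)$, and the other $I_n-I_{n-1}$ involutions carrying weight $1$.

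\textbf{Step 1: pointwise description of occurrences.} For each displayed representative I would argue as in Theorems~\ref{thm:004} and~\ref{thm:033}. Let $a<b$ be the selected values. The shading of the whole column (or row) around the first point should force $a$ to sit at position $1$ with value $1$, using involutivity $\pos_\pi(1)=\pi_1$ to convert a row condition into a column condition. The remaining shaded cells should force $b$ to be an extremal element of the rest. For one representative this should be a left-to-right minimum of $\pi_2\cdots\pi_n$, and I need to check the boundary behaviour at $b=2$ against $R_1(q)=q$. For another representative it should be a right-to-left maximum, or a statistic that becomes one of these after reflecting in the diagonal, as in Lemma~\ref{lem:transpose-invariance}. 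Each such $b$ should determine the occurrence uniquely.

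\textbf{Step 2: the conjugation $\Gamma_n$.} Since $g_n$ fixes $1$ and reverses $\{2,\dots,n\}$, the involution $\Gamma_n(\pi)$ again has $\Gamma_n(\pi)_1=1$ whenever $\pi_1=1$. Moreover, on the remaining block $\Gamma_n$ acts exactly as reverse-complement of $\pi'$. Reverse-complement interchanges left-to-right minima with right-to-left maxima, so $\Gamma_n$ transports the occurrences of one representative onto those of the other, and it is a bijection of $\In$. This gives the equidistribution. In the case $\pi_1\ne1$, both sides have no occurrences and $\Gamma_n$ preserves $\pi_1\neq 1$. The distribution then follows from Lemma~\ref{lem:R-recurrence}: the fixed-point case contributes $x\,R(x,q)$, and the remaining involutions contribute $\mathcal I(x)-x\mathcal I(x)$, the $n=0$ term being included there.

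\textbf{Main obstacle.} The delicate part is Step~1 for representatives whose shading does not literally pin $a$ to the corner. There, involutivity has to be used, as in Lemma~\ref{lem:transpose-invariance}, to show that a shaded region together with its diagonal reflection forces $\pi_1=1$. One also has to verify that the counted element set is exactly the left-to-right minima (or right-to-left maxima) of $\pi'$, including the element $\pi'_1$, with no off-by-one discrepancy. I would first test the small cases $n\le4$ against $R_{n-1}(q)$ to fix which extremal set is being counted before writing the general argument.
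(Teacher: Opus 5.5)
Your plan is correct and is essentially the paper's proof. The paper's representatives shade the bottom row and left column plus cell $(1,1)$ (respectively $(2,2)$), which forces $\pi=1\oplus\sigma$ with $\occ=\lmin(\sigma)$ (respectively $\rmax(\sigma)$); $\Gamma_n$ then acts as reverse-complement on $\sigma$, giving $[x^n]F=I_n-I_{n-1}+R_{n-1}(q)$ exactly as you predict. The obstacle you anticipate does not arise, because both the bottom row and the left column are shaded, so the first selected point is pinned to $(1,1)$ directly without any appeal to involutivity.
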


\begin{proof}
For the representatives
\[
p=
\pattern{scale=0.6}{2}{1/1,2/2}
{0/0,1/0,2/0,0/1,1/1,0/2}
\qquad\text{and}\qquad
p'=
\pattern{scale=0.6}{2}{1/1,2/2}
{0/0,1/0,2/0,0/1,0/2,2/2},
\]
fix $n\geq1$. The shaded bottom row and left column force the first
selected element to be $(1,1)$. Thus an occurrence can exist only
when $\pi_1=1$.

Note that $\Gamma_n(\pi)=g_n\pi g_n$, where $g_n$ fixes $1$ and
reverses $[2,n]$. Hence $\Gamma_n$ fixes the point $(1,1)$ and acts
as a $180^\circ$ rotation on the subgrid
$[2,n]\times[2,n]$. Under this rotation, the shaded cell $(1,1)$
of $p$ is carried to the shaded cell $(2,2)$ of $p'$, while the
remaining shading conditions are preserved. Since
$\Gamma_n^2=\mathrm{id}$, it follows that
$\occ_p(\pi)=\occ_{p'}(\Gamma_n(\pi)).$

It remains to determine their common distribution. If $\pi_1\ne1$,
then $p$ has no occurrence. If $\pi_1=1$, write
$\pi=1\oplus\sigma$ with $\sigma\in\mathcal I_{n-1}$. Pairing the
initial $1$ with an element of $\sigma$ gives an occurrence of $p$
precisely when that element is a left-to-right minimum of $\sigma$.
Hence
$\occ_p(\pi)=\lmin(\sigma)$.
There are $I_n-I_{n-1}$ involutions in $\mathcal I_n$ with
$\pi_1\ne1$, while those with $\pi_1=1$ are in bijection with
$\mathcal I_{n-1}$. Therefore, for $n\geq1$,
\[
[x^n]F^{\mathcal I}_{\class{064}}(x,q)
=
I_n-I_{n-1}+R_{n-1}(q).
\]
Summing over $n\geq1$ yields the stated generating function.
\end{proof}

% \begin{theorem}\label{thm:213}
% For every $n\geq0$, all patterns in $\class{213}$ are equidistributed
% on $\mathcal I_n$. Moreover,
% \[
% F^{\mathcal I}_{\class{213}}(x,q)
% =
% \mathcal I(x)+(q-1)\mathcal A^{\mathcal I}(x),
% \]
% where
% \[
% \mathcal A^{\mathcal I}(x)
% =
% \sum_{n\geq2}A_n^{\mathcal I}x^n,
% \qquad
% A_n^{\mathcal I}
% =
% \sum_{k=0}^{\lfloor (n-2)/2\rfloor}
% k!\binom{n-k-2}{k}I_{n-2k-2}.
% \]
% \end{theorem}

\begin{theorem}\label{thm:213}
For every $n\geq0$, all patterns in $\class{213}$ are equidistributed
on $\mathcal I_n$. Moreover,
\[
F^{\mathcal I}_{\class{213}}(x,q)
=
\mathcal I(x)
+
(q-1)[u^2]R(x,u),
\]
where $R(x,u)$ is defined in Lemma~\ref{lem:R-recurrence}.
\end{theorem}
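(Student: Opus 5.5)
The plan follows the proof of Theorem~\ref{thm:064}: relate the representatives by the conjugation $\Gamma_n$, then show that each pattern occurs at most once and count the involutions containing it.

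\textbf{Step 1: reduce to one representative.} I expect $\class{213}$ to have two symmetry orbits, with representatives $p$ and $p'$ whose shadings both contain the full bottom row and left column, or their images under Lemma~\ref{lem:involution-symmetries}. In that situation the first selected point of an occurrence is pinned to a boundary point, which $g_n$ fixes. Since $g_n$ fixes $1$ and reverses $[2,n]$, the map $\Gamma_n$ fixes that point and rotates the subgrid $[2,n]\times[2,n]$ by $180^\circ$. I will check cell by cell that this rotation carries the shading of $p$ onto that of $p'$, as with the cell $(1,1)\mapsto(2,2)$ in Theorem~\ref{thm:064}. Because $\Gamma_n$ is an involution on $\In$, this gives $\occ_p(\pi)=\occ_{p'}(\Gamma_n(\pi))$. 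It then suffices to find the distribution of $\occ_p$.

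\textbf{Step 2: the distribution.} The target formula says two things. First, $\occ_p(\pi)\in\{0,1\}$. Second, the number of $\pi\in\In$ with an occurrence equals $[u^2]R_n(u)$, the number of involutions of length $n$ with exactly two left-to-right minima. So I will prove the pointwise statement that $p$ occurs in $\pi$ if and only if $\lmin(\pi)=2$, possibly after composing with reverse-complement or $\Gamma_n$ if the chosen representative is oriented differently.

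The mechanism should be the following. If $\pi_1=a>1$, then involutivity gives $\pi_a=1$, so $a$ and $1$ are both left-to-right minima. Having exactly two of them means every entry in positions $2,\dots,a-1$ exceeds $a$. This is the same forced-transposition structure as in Theorem~\ref{thm:085}. The shaded cells of $p$ should express exactly this emptiness condition, with the selected points being $(1,a)$ together with a uniquely determined partner such as $(a,1)$. Uniqueness of the occurrence then follows because $a=\pi_1$ determines everything.

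\textbf{Step 3: conclude.} Given the pointwise statement, $[x^n]F^{\mathcal I}_{\class{213}}=I_n+(q-1)[u^2]R_n(u)$. Summing over $n$ gives the stated series $\mathcal I(x)+(q-1)[u^2]R(x,u)$.

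\textbf{Main obstacle.} The hard part is the pointwise characterization in Step 2. I must show both directions. In one direction, each shaded cell, combined with the reflected point $(\pi_i,i)$ that involutivity forces into the plot, rules out a third left-to-right minimum. In the other direction, an involution with exactly two left-to-right minima has no point in any shaded cell, so the occurrence exists. I would first do the case analysis as in Lemma~\ref{lem:transpose-invariance}, locating the reflected points relative to the shaded cells. If the equivalence with $\lmin=2$ fails directly, I would instead compute the number of involutions containing $p$ as a forced-transposition sum $\sum_k k!\binom{n-k-2}{k}I_{n-2k-2}$. I would then show separately, via the first-entry decomposition used in Lemma~\ref{lem:R-recurrence}, that this sum equals $[u^2]R_n(u)$.
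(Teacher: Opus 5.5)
Your proposal is correct and follows the paper's proof essentially step for step. The paper also uses $\Gamma_n$ to carry the shaded cell $(1,1)$ of $p$ to the cell $(2,2)$ of $p'$, and then derives the series from the pointwise identity $\occ_p(\pi)=\mathbf 1_{\{\lmin(\pi)=2\}}$, where the occurrence is formed by $(1,j)$ and $(j,1)$ with $j=\pi_1$; so your fallback sum is never needed. One small correction to Step~1: for this $21$-pattern the selected points are not fixed by $\Gamma_n$ but are sent to $(1,g_n(j))$ and $(g_n(j),1)$. This is harmless, since $g_n(j)\in[2,n]$ and the image pair is still order-isomorphic to $21$.
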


\begin{proof}
Consider the representatives
\[
p=
\pattern{scale=0.6}{2}{1/2,2/1}
{0/0,1/0,2/0,0/1,1/1,0/2}
\qquad\text{and}\qquad
p'=
\pattern{scale=0.6}{2}{1/2,2/1}
{0/0,1/0,2/0,0/1,0/2,2/2}.
\]
We first prove their equidistribution. The case $n=0$ is trivial, so
fix $n\ge1$. If $(1,j)$ and $(j,1)$ form an occurrence of $p$ in
$\pi\in\In$, then the shaded bottom row and left column force
$\pi_1=j$ and $\pi_j=1$. Under $\Gamma_n$, these two points are sent to
$(1,g_n(j))$ and $(g_n(j),1)$, respectively. Since
$g_n(j)\in[2,n]$, the transformed pair is again order-isomorphic to
$21$. As in the proof of Theorem~\ref{thm:064}, $\Gamma_n$ sends the
shaded cell $(1,1)$ of $p$ to the shaded cell $(2,2)$ of $p'$ and
preserves all remaining shading conditions. Since
$\Gamma_n^2=\mathrm{id}$, it induces a bijection between the occurrences
of $p$ in $\pi$ and those of $p'$ in $\Gamma_n(\pi)$. Hence
$\occ_p(\pi)=\occ_{p'}(\Gamma_n(\pi)).$

We now determine the common distribution using $p$. If $\pi_1=j>1$,
then involutivity gives $\pi_j=1$, so the only possible occurrence is
formed by $(1,j)$ and $(j,1)$. The shaded cell $(1,1)$ is empty
precisely when $\pi_i>j$ for $2\le i<j$, which is equivalent to
$\pi_1=j$ and $\pi_j=1$ being the only left-to-right minima of $\pi$.
Therefore
$\occ_p(\pi)=\mathbf1_{\{\lmin(\pi)=2\}}.$

Since every involution contains at most one occurrence of $p$,
\[
\sum_{\pi\in\In}q^{\occ_p(\pi)}
=
I_n
+
(q-1)\#\{\pi\in\In:\lmin(\pi)=2\}.
\]
Summing over $n\geq0$ gives
$F^{\mathcal I}_{\class{213}}(x,q)
=
\mathcal I(x)+(q-1)[u^2]R(x,u),$
as required.
\end{proof}

% It remains to count such involutions. Fix $\pi_1=j$. Each of the
% elements $2,\ldots,j-1$ must be paired with a distinct element of
% $\{j+1,\ldots,n\}$. Choosing these $j-2$ partners and matching them
% with $2,\ldots,j-1$ gives
% $(j-2)!\binom{n-j}{j-2}$ possibilities, while the remaining
% $n-2j+2$ elements form an arbitrary involution. Hence
% \[
% A_n^{\mathcal I}
% =
% \sum_{j=2}^{\lfloor (n+2)/2\rfloor}
% (j-2)!\binom{n-j}{j-2}I_{n-2j+2}
% =
% \sum_{k=0}^{\lfloor (n-2)/2\rfloor}
% k!\binom{n-k-2}{k}I_{n-2k-2}.
% \]
% Since the common occurrence number is $0$--$1$ valued, summing over
% $n$ gives
% \[
% F^{\mathcal I}_{\class{213}}(x,q)
% =
% \mathcal I(x)+(q-1)\mathcal A^{\mathcal I}(x).
% \qedhere
% \]
% \end{proof}

% ================================

\subsection{RSK Bijections for Classes INV-046 and INV-052}

By the RSK correspondence
\cite{Robinson1938,Schensted1961,Knuth1970}, if $\pi\in\In$, then
$\operatorname{RSK}(\pi)=(T,T)$ for a
standard Young tableau $T$. Define
$\Theta_n\colon\In\to\In$ by
\[
\operatorname{RSK}(\Theta_n(\pi))
=
(T^{\mathsf t},T^{\mathsf t}).
\]
Recall that if $\operatorname{RSK}(\sigma)=(P,Q)$, then
$\operatorname{RSK}(\sigma^{-1})=(Q,P)$. Hence, $\Theta_n$ maps
$\In$ into itself, and $(T^{\mathsf t})^{\mathsf t}=T$ shows that it
is an involutive bijection of $\In$.

By the RSK descent correspondence and tableau
transposition (see \cite[Proposition~1]{Chow}),
\begin{equation}\label{eq:RSK-descent-complement}
\Des(\Theta_n(\pi))
=
\Des(T^{\mathsf t})
=
[n-1]\setminus\Des(T)
=
[n-1]\setminus\Des(\pi).
\end{equation}
Consequently,
$\operatorname{des}(\Theta_n(\pi))=\operatorname{asc}(\pi)$ and
$\operatorname{asc}(\Theta_n(\pi))=\operatorname{des}(\pi)$.
Thus transposing the RSK recording tableau gives a bijection on involutions that interchanges ascents and descents.

\begin{theorem}\label{thm:046}
For every $n\geq0$, all patterns in $\class{046}$ are equidistributed
on $\mathcal I_n$. Moreover,
\begin{equation}\label{eq:046-generating-function}
F^{\mathcal I}_{\class{046}}(x,q)
=
(1-q)\sum_{r\geq0}
\frac{q^r}
{\bigl(1-(1-q)x\bigr)^{r+1}
 \bigl(1-(1-q)^2x^2\bigr)^{\binom{r+1}{2}}}.
\end{equation}
\end{theorem}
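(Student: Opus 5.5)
The plan is to reduce every representative of $\class{046}$ to one of the two Eulerian statistics $\operatorname{des}$ or $\operatorname{asc}$ on involutions, then use the bijection $\Theta_n$ to identify them, and finally read off the generating function from the known Eulerian distribution on involutions.

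\textbf{Step 1: pointwise description of occurrences.} I expect the class to consist of orbits of the form $(12,R)$ and $(21,R')$ in which the middle column (or, after the inverse symmetry, the middle row) is shaded. For a pattern with underlying permutation $12$ and the whole column $1$ shaded, an occurrence is exactly a pair of adjacent positions $i,i+1$ with $\pi_i<\pi_{i+1}$. I will check that the remaining shaded cells are either redundant or only shift which adjacent pairs are counted. For the representatives in row form, the inverse symmetry of Lemma~\ref{lem:involution-symmetries} lets me pass to the column form, since $\pi=\pi^{-1}$. The expected outcome is $\occ_p(\pi)=\operatorname{asc}(\pi)$ for one representative and $\occ_{p'}(\pi)=\operatorname{des}(\pi)$ for the other, for every $\pi\in\In$.

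\textbf{Step 2: the bijection.} By \eqref{eq:RSK-descent-complement}, the map $\Theta_n$ is a bijection of $\In$ with $\operatorname{des}(\Theta_n(\pi))=\operatorname{asc}(\pi)$. Hence $\occ_{p'}(\Theta_n(\pi))=\occ_p(\pi)$, which gives the equidistribution.

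\textbf{Step 3: the formula.} The common distribution is $A^{\mathcal I}_n(q)=\sum_{\pi\in\In}q^{\operatorname{des}(\pi)}$, with $A^{\mathcal I}_0=1$. I will use the classical identity for involutions due to Désarménien--Foata, also found in Guo--Zeng~\cite{GuoZeng}:
\[
\sum_{n\ge0}\frac{A^{\mathcal I}_n(t)}{(1-t)^{n+1}}x^n
=\sum_{r\ge0}\frac{t^r}{(1-x)^{r+1}(1-x^2)^{\binom{r+1}{2}}}.
\]
Substituting $t=q$, replacing $x$ by $(1-q)x$, and multiplying by $1-q$ turns the left side into $\sum_n A^{\mathcal I}_n(q)x^n$, which is exactly \eqref{eq:046-generating-function}.

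\textbf{Main obstacle.} The delicate part is Step 1. I must check the exact shading of each representative, in particular whether extra shaded boundary cells impose conditions beyond adjacency. I must also handle the conventions for small $n$ so that the occurrence count really equals $\operatorname{des}$ or $\operatorname{asc}$, including the convention $\operatorname{asc}=0$ for $n=0$. If some representative instead counts, for instance, descents with an extra boundary condition, I would first try Lemma~\ref{lem:transpose-invariance}-style diagonal-symmetry arguments to show that the extra cells are redundant on involutions.
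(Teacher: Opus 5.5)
Your proposal is correct and is essentially the paper's proof. The class consists exactly of the two orbits of $(12,\{(0,1),(1,1),(2,1)\})$ and $(21,\{(0,1),(1,1),(2,1)\})$, so the extra-boundary-cell contingency in your Step~1 never arises; the paper identifies their occurrences with ascents and descents by using $\pi=\pi^{-1}$ to turn consecutive values into adjacent positions (equivalent to your passage to the middle-column form via Lemma~\ref{lem:involution-symmetries}), and then applies $\Theta_n$ and the Guo--Zeng identity with the substitution $(1-q)x$ exactly as in your Steps~2 and~3.
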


\begin{proof}
For the representatives
\[
p=
\pattern{scale=0.6}{2}{1/1,2/2}{0/1,1/1,2/1}
\qquad\text{and}\qquad
p'=
\pattern{scale=0.6}{2}{1/2,2/1}{0/1,1/1,2/1},
\]

The shaded middle row is empty if and only if the two selected values
are consecutive. Let $a<b$ be the selected positions. For $p$, we
have $\pi_b=\pi_a+1$. Setting $k=\pi_a$ and using involutivity gives
$\pi_k=a<b=\pi_{k+1}$, so each occurrence of $p$ corresponds to an
ascent of $\pi$, and conversely. Hence,
$\occ_p(\pi)=\operatorname{asc}(\pi)$. For $p'$, we have
$\pi_a=\pi_b+1$. Setting $k=\pi_b$ gives
$\pi_k=b>a=\pi_{k+1}$, so occurrences of $p'$ correspond bijectively
to descents of $\pi$. Therefore,
$\occ_{p'}(\pi)=\operatorname{des}(\pi)$.

Combining these identities with
\eqref{eq:RSK-descent-complement}, we obtain
$\occ_p(\pi)
=
\operatorname{asc}(\pi)
=
\operatorname{des}(\Theta_n(\pi))
=
\occ_{p'}(\Theta_n(\pi)).$
Since $\Theta_n$ is a bijection of $\In$, the two representatives are
equidistributed.

Put
$E_n^{\mathcal I}(q)
=
\sum_{\pi\in\In}q^{\operatorname{des}(\pi)}.$
By the occurrence interpretation above and the bijection $\Theta_n$,
the common distribution polynomial of the representatives in
$\class{046}$ is $E_n^{\mathcal I}(q)$. Thus,
$F^{\mathcal I}_{\class{046}}(x,q)
=
\sum_{n\geq0}E_n^{\mathcal I}(q)x^n.$

We now use the classical generating function for the Eulerian
polynomials on involutions; see \cite[Eq.~(1.1)]{GuoZeng}:
\[
\sum_{n\geq0}
\frac{E_n^{\mathcal I}(q)z^n}{(1-q)^{n+1}}
=
\sum_{r\geq0}
\frac{q^r}
{(1-z)^{r+1}(1-z^2)^{\binom{r+1}{2}}}.
\]
Substituting $z=(1-q)x$ and multiplying both sides by $1-q$, the
left-hand side becomes
$\sum_{n\geq0}E_n^{\mathcal I}(q)x^n
=
F^{\mathcal I}_{\class{046}}(x,q).$
Therefore, the resulting identity is precisely
\eqref{eq:046-generating-function}.
\end{proof}

% \begin{remark}
% The polynomials $E_n^{\mathcal I}(q)$ are the Eulerian polynomials on
% involutions. Guo and Zeng proved that their coefficient sequences are
% unimodal \cite{GuoZeng}, and Wang later proved the stronger property of
% $\gamma$-positivity \cite[Theorem~1.2]{Wang}.
% \end{remark}

\begin{theorem}\label{thm:052}
For every $n\geq0$, all patterns in $\class{052}$ are equidistributed
on $\mathcal I_n$. Moreover,
\[
F^{\mathcal I}_{\class{052}}(x,q)
=
\frac{1+q}{2}\mathcal I(x)
+\frac{1-q}{2}(1+x).
\]
\end{theorem}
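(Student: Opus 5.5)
The plan is to follow the proof of Theorem~\ref{thm:046}, using the bijection $\Theta_n$ that transposes the RSK recording tableau. The shape of the formula guides this. For $n\ge2$, the coefficient of $x^n$ is $\frac{1+q}{2}I_n$, so I expect each representative to occur at most once, and exactly half of $\In$ to contain an occurrence. The correction $\frac{1-q}{2}(1+x)$ only fixes $n=0,1$, where there is no occurrence and the coefficient is $1$.

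First, for each displayed representative (one per symmetry orbit, as the paper reduces throughout), I would translate the shading into a pointwise occurrence condition. I expect the representatives to have underlying patterns $12$ and $21$, with the shading forcing the two selected points to be adjacent at a fixed end. For instance, shaded columns $0$ and $1$ force the selected positions to be $1$ and $2$. The occurrence number is then $\mathbf 1_{\{1\notin\Des(\pi)\}}$ for a $12$-type representative and $\mathbf 1_{\{1\in\Des(\pi)\}}$ for a $21$-type one.

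If instead the shading forces consecutive values (full middle row, as in $\class{046}$), I would use involutivity, $\pos_\pi(a)=\pi_a$, to convert the value condition into the position condition $\pi_1\lessgtr\pi_2$ (or $\pi_{n-1}\lessgtr\pi_n$). The argument is the same as in Theorem~\ref{thm:046}. Representatives anchored at the other end reduce to position $1$ by reverse-complement, via Lemma~\ref{lem:involution-symmetries}.

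Second, I would apply \eqref{eq:RSK-descent-complement}. Since $\Des(\Theta_n(\pi))=[n-1]\setminus\Des(\pi)$, for $n\ge2$ we have $1\in\Des(\pi)$ if and only if $1\notin\Des(\Theta_n(\pi))$. Hence $\occ_p(\pi)=\occ_{p'}(\Theta_n(\pi))$ for the ascent-type representative $p$ and the descent-type representative $p'$, which gives equidistribution.

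Moreover, $\Theta_n$ is an involutive bijection of $\In$ that swaps the two sets $\{\pi:1\in\Des(\pi)\}$ and $\{\pi:1\notin\Des(\pi)\}$. So each set has size $I_n/2$ when $n\ge2$. The $n\ge2$ coefficient is therefore $\frac{I_n}{2}(1+q)$, and adding the constant and linear terms gives the stated series.

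The main obstacle is the first step: checking for every representative that the shading pins the occurrence to exactly one adjacent pair with no further constraint. Any leftover shaded cell would add a condition beyond $\pi_1\lessgtr\pi_2$ and would break the exact halving. I would verify this cell by cell, using the involution symmetry (as in Lemma~\ref{lem:transpose-invariance}) to see that any extra shaded cell is automatically empty.
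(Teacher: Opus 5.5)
Your proposal is correct and follows the paper's proof. The paper's representatives are $12$ and $21$ with the bottom two rows shaded; these are the inverses of your column-shaded example, so they lie in the same orbits. That shading forces the selected values to be $1,2$, and involutivity $\pos_\pi(i)=\pi_i$ turns the occurrence indicators into $\mathbf 1_{\{\pi_1<\pi_2\}}$ and $\mathbf 1_{\{\pi_1>\pi_2\}}$. Then $\Theta_n$ swaps these two conditions via \eqref{eq:RSK-descent-complement}, which gives equidistribution and $I_n/2$ involutions of each kind for $n\ge2$. The shading consists exactly of those two full rows, so the leftover-cell obstacle you anticipate does not arise.
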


\begin{proof}
Consider the representatives
\[
p=
\pattern{scale=0.6}{2}{1/1,2/2}
{0/0,1/0,2/0,0/1,1/1,2/1}
\qquad\text{and}\qquad
p'=
\pattern{scale=0.6}{2}{1/2,2/1}
{0/0,1/0,2/0,0/1,1/1,2/1}.
\]
Fix $n\geq2$. The shading forces both patterns to occur only on the
elements with values $1$ and $2$. Hence, every $\pi$ has at most one
occurrence of either pattern.

The pattern $p$ occurs if and only if
$\pos_\pi(1)<\pos_\pi(2)$, whereas $p'$ occurs if and only if
$\pos_\pi(2)<\pos_\pi(1)$. Since $\pi$ is an involution,
$\pos_\pi(i)=\pi_i$, so these conditions are equivalent to
$\pi_1<\pi_2$ and $\pi_1>\pi_2$, respectively. Thus, $p$ and $p'$ occur precisely according as position $1$ is an ascent or a descent, respectively.

By \eqref{eq:RSK-descent-complement}, $\Theta_n$ exchanges these two
conditions. Since exactly one of them holds for every $\pi\in\In$,
each holds for $I_n/2$ involutions. Therefore,
\[
[x^n]F^{\mathcal I}_{\class{052}}(x,q)
=
\frac{I_n}{2}(1+q)
\qquad(n\geq2).
\]

For $n=0,1$, neither pattern has an occurrence, so the corresponding
terms are $1+x$. Hence,
\[
\begin{aligned}
F^{\mathcal I}_{\class{052}}(x,q)
&=
1+x+\frac{1+q}{2}\sum_{n\geq2}I_nx^n\\
&=
\frac{1+q}{2}\mathcal I(x)
+\frac{1-q}{2}(1+x).
\end{aligned}
\]
\end{proof}

\begin{example}\label{ex:rsk-11}
Let
\[
\pi=(8,4,10,2,5,9,11,1,6,3,7)
=(1\,8)(2\,4)(3\,10)(6\,9)(7\,11)(5).
\]
The RSK correspondence gives $\operatorname{RSK}(\pi)=(T,T)$, where
\[
T=
\begin{array}{|c|c|c|c|}
\hline
1&3&6&7\\ \hline
2&5&11&\multicolumn{1}{c}{}\\ \cline{1-3}
4&9&\multicolumn{2}{c}{}\\ \cline{1-2}
8&10&\multicolumn{2}{c}{}\\ \cline{1-2}
\end{array}
\quad\longmapsto\quad
T^{\mathsf t}=
\begin{array}{|c|c|c|c|}
\hline
1&2&4&8\\ \hline
3&5&9&10\\ \hline
6&11&\multicolumn{2}{c}{}\\ \cline{1-2}
7&\multicolumn{3}{c}{}\\ \cline{1-1}
\end{array}.
\]
Applying inverse RSK to $(T^{\mathsf t},T^{\mathsf t})$ gives
\[
\Theta_{11}(\pi)
=(3,7,1,9,6,5,2,11,4,10,8)
=(1\,3)(2\,7)(4\,9)(5\,6)(8\,11)(10).
\]
Here,
$\Des(\pi)=\{1,3,7,9\},
\Des(\Theta_{11}(\pi))=\{2,4,5,6,8,10\}.$
For $\class{046}$, the consecutive values $a,a+1$ form a
$p$-occurrence in $\pi$ precisely for
$a\in\Des(\Theta_{11}(\pi))$. For the same values of $a$, they form the
$p'$-occurrences in $\Theta_{11}(\pi)$. Conversely, they form the
$p'$-occurrences in $\pi$ and the $p$-occurrences in
$\Theta_{11}(\pi)$ precisely for $a\in\Des(\pi)$. Thus,
$\occ_p(\pi)=\occ_{p'}(\Theta_{11}(\pi))=6,
\occ_{p'}(\pi)=\occ_p(\Theta_{11}(\pi))=4.$

For $\class{052}$, $\pos_\pi(2)<\pos_\pi(1)$ gives the $p'$-occurrence in $\pi$, whereas
$\pos_{\Theta_{11}(\pi)}(1)<\pos_{\Theta_{11}(\pi)}(2)$ gives the $p$-occurrence in $\Theta_{11}(\pi)$. Thus, $\Theta_{11}$
interchanges the two occurrence indicators for $\class{052}$.
\end{example}

\subsection{Prefix-Rank Bijection for Class INV-023}

% ===============

\begin{theorem}\label{thm:023}
For every $n\geq0$, all patterns in $\class{023}$ are equidistributed
on $\mathcal I_n$. Moreover,
\begin{equation}\label{eq:023-generating-function}
F^{\mathcal I}_{\class{023}}(x,q)
=
1+\frac{R(x,q)-1}{q},
\end{equation}
where $R(x,q)$ is defined in
Lemma~\ref{lem:R-recurrence}.
\end{theorem}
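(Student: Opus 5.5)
The formula \eqref{eq:023-generating-function} says that, for $n\geq1$, the common statistic should be $\lmin(\pi)-1$, or equivalently $\rmax(\pi)-1$ by Lemma~\ref{lem:R-recurrence}. So I would split the argument into an occurrence count for one representative and an involution-preserving bijection carrying that representative to the others. The section title suggests the bijection is a prefix-rank swap. Since I have not fixed the exact shadings of the $\class{023}$ representatives, the steps below name the kind of argument rather than specific cells.

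\textbf{Step 1: count occurrences for a first representative.} Choose a representative $p$ whose shading pins down its occurrences directly, as in Theorems~\ref{thm:004} and~\ref{thm:033}. I expect an occurrence of $p$ to consist of a left-to-right minimum $b\neq\pi_1$ (the shaded bottom-left cells force this) together with a uniquely determined partner $a$. The partner would be, for instance, the nearest earlier left-to-right minimum or the first entry $\pi_1$. This gives
\[
\occ_p(\pi)=\lmin(\pi)-1\qquad(\pi\in\In,\ n\geq1).
\]
Summing $q^{\lmin(\pi)-1}$ over $\In$ and $n\geq1$ gives $(R(x,q)-1)/q$, and adding $1$ for $n=0$ proves \eqref{eq:023-generating-function}. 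If some other representative also admits a pointwise description, say as $\rmax(\pi)-1$, I would handle it the same way and appeal to reverse-complement symmetry.

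\textbf{Step 2: handle the remaining representative by a bijection.} For a representative $p'$ whose count is not pointwise equal to $\lmin-1$, I would first describe $\occ_{p'}(\pi)$ as the number of positions $j$ satisfying a condition on the relative rank of $\pi_j$ among $\pi_1,\ldots,\pi_j$. For example, $\pi_j$ might be required to be the second smallest entry of its prefix instead of the smallest. I would then build a map $\Phi\colon\In\to\In$ that, at each relevant position, swaps the prefix-minimum and prefix-second-minimum roles. Concretely, $\Phi$ exchanges two values $u,v$ and simultaneously exchanges the positions $u,v$, which amounts to conjugating by the transposition $(u\,v)$. Conjugation preserves involutions, exactly as with $\Gamma_n$ in the proofs of Theorems~\ref{thm:064} and~\ref{thm:213}. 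I would apply these conjugations from left to right, so that each swap is determined by the prefix already processed and the map is invertible by undoing the swaps from right to left. The goal is
\[
\occ_{p'}(\pi)=\occ_p(\Phi(\pi)).
\]

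\textbf{Main obstacle.} The hard part is compatibility with involutivity. Swapping values $u,v$ also moves the entries in positions $u$ and $v$, and that can change prefix ranks at later positions. I would try to choose the swapped pairs so that they lie on a single $2$-cycle, or on two fixed points adjacent in the relevant order. The aim is that the conjugation touches only points whose prefix-rank status is exactly the one being toggled. I would verify this first on small cases, $n\leq5$, by checking that the composite map is a bijection preserving $\In$ and that it transports the statistic. If the left-to-right conjugation proves unstable, the fallback is a recurrence on the cycle containing $n+1$, as in Lemma~\ref{lem:R-recurrence}. That would show $\sum_{\pi\in\In}q^{\occ_{p'}(\pi)}$ satisfies \eqref{eq:R-recurrence} after the shift by $q^{-1}$: distinguish whether $n+1$ is fixed, paired with $1$, or paired with some $k\geq2$, and track how occurrences change in each case.
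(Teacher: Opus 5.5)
\textbf{Gap in Step~2.} Your Step~1 fits the paper's structure. There, four representatives (all with underlying pattern $21$) have exactly $\lmin(\pi)-1$ occurrences pointwise; the left-to-right minimum $a>1$ is the left selected point and its partner lies to its right. The fifth representative is $12$ with cells $(0,0),(1,0),(0,1),(1,1)$ shaded. It has $\#\{j:r_j(\pi)=1\}$ occurrences, where $r_j(\pi)=\#\{k<j:\pi_k<\pi_j\}$, just as you guessed.

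The gap is that no map built from conjugations can carry this statistic to $\lmin-1$. If $\Phi(\pi)=g\pi g^{-1}$ for some $g$ depending on $\pi$, then $\Phi(\pi)$ has the cycle type of $\pi$, so $\Phi(\mathrm{id}_n)=\mathrm{id}_n$. But for $n\geq2$ the identity has $r_j=j-1$, hence exactly one position of rank $1$, while $\lmin(\mathrm{id}_n)-1=0$. Already on $\mathcal I_2$ the required correspondence must send $12$ to $21$, which changes the number of fixed points. Moreover, the pairs you propose to swap (the two points of a $2$-cycle, or two fixed points) give $(u\,v)\pi(u\,v)=\pi$, so those conjugations do nothing at all.

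The paper's bijection $\mathcal C$ is not a conjugation. It marks the positions with $r_j(\pi)\le1$, complements every marked rank except the first, and keeps all other ranks. It then rebuilds the permutation from right to left from the new rank sequence. Involutivity of the result is proved separately, using that the marked set is closed under taking points to the southwest, plus an induction on the binary rank word of the marked restriction.

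\textbf{Your fallback works.} It gives a shorter, purely enumerative proof, provided you notice that the special case is $k=2$ rather than $k=1$. Put $D_n(q)=\sum_{\pi\in\In}q^{\#\{j:r_j(\pi)=1\}}$. Insert the transposition $(k\,\,n+1)$ into $\tau\in\mathcal I_{n-1}$: value $n+1$ goes to position $k$, value $k$ to position $n+1$, and all other entries shift. Every old prefix rank is unchanged, because $n+1$ exceeds every entry and $k$ comes last. Both new entries receive rank $k-1$, and a new fixed point $n+1$ receives rank $n\geq2$. Hence, for $n\geq2$,
\[
D_{n+1}=D_n+(n-1+q^2)D_{n-1},
\]
where the factor $q^2$ comes from $k=2$, with $D_1=1$ and $D_2=1+q$. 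The sequence $R_n(q)/q$ satisfies the same recurrence \eqref{eq:R-recurrence} with the same values at $n=1,2$, so $D_n=R_n/q$ for all $n\geq1$. Together with Step~1, this gives the equidistribution and \eqref{eq:023-generating-function}. What this route does not produce is an explicit involution-preserving bijection such as $\mathcal C$, which the paper's proof supplies.
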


\begin{proof}
The case $n=0$ is trivial, so fix $n\geq1$. Consider the representatives
\[
\begin{gathered}
p_0=
\pattern{scale=0.6}{2}{1/1,2/2}{0/0,1/0,0/1,1/1},
\qquad
p_1=
\pattern{scale=0.6}{2}{1/2,2/1}{0/0,1/0,2/0,0/1},
\qquad
p_2=
\pattern{scale=0.6}{2}{1/2,2/1}{0/0,1/0,0/1,1/1},
\qquad
p_3=
\pattern{scale=0.6}{2}{1/2,2/1}{0/0,2/0,0/1,2/1},
\qquad
p_4=
\pattern{scale=0.6}{2}{1/2,2/1}{0/0,0/1,1/1,2/1}.
\end{gathered}
\]
For $p_1, p_2, p_3$, and $p_4$, an occurrence is uniquely determined by a left-to-right minimum $a>1$. Due to the shading, $a$ must be paired with $1$ for $p_1$, with the next left-to-right minimum for $p_2$, with the rightmost element among $1,\ldots,a-1$ for $p_3$, and with $a-1$ for $p_4$. In each case, this partner uniquely exists. 
Consequently, these four patterns have the same number of occurrences. Thus, for every $\pi \in \In$,
\begin{equation}\label{eq:023-lmin}
\occ_{p_1}
=
\occ_{p_2}
=
\occ_{p_3}
=
\occ_{p_4}
=
\lmin-1.
\end{equation}
It remains to construct a bijection $\mathcal{C}$ on $\In$ such that $\occ_{p_0}(\pi) = \occ_{p_2}(\mathcal{C}(\pi))$. For $\pi=\pi_1\cdots\pi_n\in\In$, define the prefix-rank of $\pi_j$ in the prefix $\pi_1\cdots\pi_j$ by
$r_j(\pi)=\#\{k<j:\pi_k<\pi_j\}.$
Thus, $r_j(\pi)=0$ if $\pi_j$ is a left-to-right minimum, and
$r_j(\pi)=1$ if it is the second smallest element of
$\pi_1,\ldots,\pi_j$.
Mark every position $j$ for which $r_j(\pi)\leq1$, assigning it the
mark $r_j(\pi)\in\{0,1\}$, and write the marked positions as
$a_1<a_2<\cdots<a_m$, and note that $a_1 = 1$.
Let $M=\{a_1,\ldots,a_m\}$. Since $\pi$ is an involution, symmetry yields $r_{\pi_j}(\pi) = r_j(\pi)$.
Consequently, the marked values are precisely the marked positions:
$\{\pi_{a_1},\ldots,\pi_{a_m}\}
=
\{a_1,\ldots,a_m\}.$

We now construct $\sigma=\mathcal{C}(\pi)$. Set
$U=M$ and process the marked positions from right to
left. For $t=m,m-1,\ldots,2$, define
\[
\sigma_{a_t}=
\begin{cases}
\min U,
   & r_{a_t}(\pi)=1,\\[1mm]
\text{the second smallest element of }U,
   & r_{a_t}(\pi)=0,
\end{cases}
\]
and delete the chosen value from $U$. Place the remaining value at
position $a_1$, and set $\sigma_j=\pi_j$ at every unmarked position.

For example, take
$\pi=453126789
=(1\,4)(2\,5)(3)(6)(7)(8)(9).$
Then
$(r_1(\pi),\ldots,r_9(\pi))
=(0,1,0,0,1,5,6,7,8),$
so the marked positions are $1,2,3,4,5$. The successive choices are
shown horizontally below:
\[
\begin{array}{c|ccccc}
\text{position processed}
   &5&4&3&2&1\\ \hline
\text{mark}
   &1&0&0&1&0\\
U\text{ before placement}
   &\{1,2,3,4,5\}
   &\{2,3,4,5\}
   &\{2,4,5\}
   &\{2,5\}
   &\{5\}\\
\text{choice}
   &\min&\text{second min.}&\text{second min.}&\min&\text{remaining}\\
\text{value placed}
   &1&3&4&2&5
\end{array}
\]
Thus
$\mathcal{C}(\pi)=524316789$.
% =(1\,5)(2)(3\,4)(6)(7)(8)(9),$
% and
% $(r_1(C(\pi)),\ldots,r_9(C(\pi)))
% =(0,0,1,1,0,5,6,7,8).$
In $\pi$, the two $p_0$-occurrences end at positions $2$ and $5$ (specifically, occupying the pairs $(1,2)$ and $(4,5)$). Correspondingly, the left-to-right minima of $\mathcal{C}(\pi)$ occur at positions $1, 2$, and $5$, yielding the two $p_2$-occurrences on the pairs $(1, 2)$ and $(2, 5)$.

We verify the rank changes by downward induction. Before processing a
position $j$, suppose that the remaining values for $\pi$ and
$\mathcal{C}(\pi)$, written increasingly, are
$x<z_2<\cdots<z_j
\text{ and }
y<z_2<\cdots<z_j.$
This holds initially, when both lists are $1<2<\cdots<n$.
If $j$ is marked and $r_j(\pi)=0$, then $\pi$ removes $x$,
whereas $\mathcal{C}(\pi)$ removes $z_2$. The remaining lists are
$z_2<z_3<\cdots<z_j
\text{ and }
y<z_3<\cdots<z_j.$
If $r_j(\pi)=1$, then $\pi$ removes $z_2$, whereas $\mathcal{C}(\pi)$
removes $y$, and the same relation is preserved. If $j$ is
unmarked, write $r_j(\pi)=k\geq2$. Both procedures then remove
$z_{k+1}$, so its rank remains $k$, and the relation between the
two lists is again preserved.

Therefore,
$r_{a_1}(\mathcal{C}(\pi))=0,
r_{a_t}(\mathcal{C}(\pi))=1-r_{a_t}(\pi)$ $(2\leq t\leq m)$,
and
$r_j(\mathcal{C}(\pi))=r_j(\pi)$ 
$(j\notin M)$.
Thus, complementing the ranks at the marked positions does not affect
the ranks at the unmarked positions. Consequently, the set of marked
positions is preserved:
$M(\mathcal{C}(\pi))=M(\pi).$

It remains to verify that $\mathcal{C}(\pi)$ is an involution.
Indeed, every element strictly southwest of a marked element is itself
marked. Hence, after restricting to $M$ and standardizing, the ranks
of the marked positions remain
$r_{a_1}(\pi),\ldots,r_{a_m}(\pi)$.
This gives an involution reconstructed from the binary rank sequence
$\bigl(r_{a_1}(\pi),\ldots,r_{a_m}(\pi)\bigr)$, whereas the restriction of
$\mathcal{C}(\pi)$ is reconstructed from
$\bigl(0,1-r_{a_2}(\pi),\ldots,1-r_{a_m}(\pi)\bigr).$
We prove by induction on $m$ that the latter restriction is also an
involution. The cases $m\leq2$ are immediate. For $m\geq3$, let
$\varepsilon_t=r_{a_t}(\pi)$. If $\varepsilon_m=0$, then the last
element of the first restriction is $1$. Since this restriction is an
involution, its first element is $m$, and hence $\varepsilon_2=0$.
The complemented reconstruction therefore contains the transposition
$(2\,m)$, in place of $(1\,m)$. If $\varepsilon_m=1$, the first
restriction contains $(2\,m)$ and $\varepsilon_2=1$, while the
complemented reconstruction contains $(1\,m)$. After deleting these
transpositions and standardizing, the remaining rank sequences are
again complementary, apart from their initial zero. The result follows
by induction.
Hence, the marked restriction of $\mathcal{C}(\pi)$ is an involution. Its unmarked restriction 
is unchanged, so $\mathcal{C}(\pi)\in\In$.

The bijection preserves the marked positions, leaves the unmarked
ranks unchanged, and complements every marked rank except the first.
Applying it twice therefore restores the entire rank sequence of
$\pi$. Since a permutation is uniquely determined by its rank
sequence through right-to-left reconstruction, we have
$\mathcal{C}^2(\pi)=\pi.$
Thus, $\mathcal{C}$ is a self-inverse bijection on~$\In$.

Finally, a $p_0$-occurrence ending at position $j$ exists precisely when
$r_j(\pi)=1$; in that case, its first selected element is the unique
earlier element smaller than $\pi_j$. 
\[
\begin{aligned}
\occ_{p_0}(\pi)
&=\#\{j:r_j(\pi)=1\}\\
&=\#\{j:r_j(\mathcal{C}(\pi))=0\}-1\\
&=\lmin(\mathcal{C}(\pi))-1\\
&=\occ_{p_2}(\mathcal{C}(\pi)),
\end{aligned}
\]
where the last equality follows from \eqref{eq:023-lmin}. Thus,
$p_0$ and $p_2$ are equidistributed. Together with
\eqref{eq:023-lmin}, this proves that all five representatives are
equidistributed.

% =========
It remains to determine the common distribution. By
\eqref{eq:023-lmin} and the bijection $\mathcal{C}$, for every
$p\in\class{023}$ and $n\geq1$,
$
\sum_{\pi\in\mathcal I_n}q^{\occ_p(\pi)}
=
\sum_{\pi\in\mathcal I_n}q^{\lmin(\pi)-1}
=
\frac{R_n(q)}{q}.
$

The quotient is a polynomial in $q$, since every nonempty
permutation has at least one left-to-right minimum. Summing over $n\geq1$ and including the empty involution gives \eqref{eq:023-generating-function}.
\end{proof}

\begin{remark}
The proof gives
$\occ_{p_0}(\pi)
=
\lmin(\mathcal C(\pi))-1.$
Let $p_{004}$ be either of the following two patterns in $\class{004}$, displayed in
the proof of Theorem~\ref{thm:004}:
$\pattern{scale=0.6}{2}{1/1,2/2}{0/0,1/0}$
or
$\pattern{scale=0.6}{2}{1/1,2/2}{1/0,1/1}.$
Then
$\occ_{p_{004}}(\mathcal C(\pi))
=
n-\lmin(\mathcal C(\pi)).$
Hence
$\occ_{p_0}(\pi)
+
\occ_{p_{004}}(\mathcal C(\pi))
=
n-1.$
Since $\mathcal C$ is a bijection of $\mathcal I_n$, and all patterns
within the respective classes are equidistributed, for every
$n\geq1$,
\[
[x^n]F^{\mathcal I}_{\class{023}}(x,q)
=
q^{n-1}
[x^n]F^{\mathcal I}_{\class{004}}(x,q^{-1}).
\]
\end{remark}

\subsection{Right-to-Left Prefix-Swap Bijection for Class INV-125}

\begin{theorem}\label{thm:125}
For every $n\geq0$, all patterns in $\class{125}$ are equidistributed
on $\mathcal I_n$. Moreover,
\begin{equation}\label{eq:125-generating-function}
F^{\mathcal I}_{\class{125}}(x,q)
=
\frac{
\mathcal I(x)-q(q-1)\bigl(\mathcal P(x^2)-1\bigr)
}{
1-(q^2-1)\bigl(\mathcal P(x^2)-1\bigr)
}.
\end{equation}
\end{theorem}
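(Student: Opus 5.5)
We do not know the representatives of \class{125} (the table is truncated), but the section title and the introduction tell us how the proof should go. The subsection heading says the equidistribution comes from a right-to-left prefix-swap bijection. The introduction says the distribution comes from ``the first block determined by the leftmost occurrence,'' which leads to a functional equation. So the plan has three steps. First, fix representatives $p$ and $p'$ and describe their occurrences pointwise. Second, build an involution-preserving bijection between them. Third, enumerate one representative through a first-block decomposition.

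\textbf{Occurrence structure.} For each representative, we describe an occurrence $(i,j)$ in terms of involutive structure, as was done for \class{250} and \class{085}. The denominator $1-(q^2-1)(\mathcal P(x^2)-1)$ strongly suggests the following picture.
\begin{itemize}
\item Each occurrence is witnessed by a nonempty permutation $\alpha$ of size $k$ together with its inverse $\alpha^{-1}$.
\item These form an initial segment of positions and a matching segment of values, exchanged by the diagonal reflection. This is the same mechanism that produced $(a-1)!$ in \class{250}.
\item Each such block contributes two occurrences, weight $q^2$; the $q(q-1)$ term in the numerator is a boundary correction, discussed below.
\end{itemize}
Concretely, I would show the following. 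Either $\pi$ has no occurrence, or the leftmost occurrence cuts off a ``first block'' $B$ consisting of $2k$ elements, where positions $1..k$ are matched bijectively with $k$ values forming $\alpha$. After deleting $B$ and standardizing, what remains is again an arbitrary involution. Occurrences inside it correspond to occurrences of the original, with a controlled boundary effect.

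\textbf{Bijection.} Between the two representatives I would use a right-to-left prefix swap. Processing from right to left, we exchange the roles of the two selected points inside each block $\alpha$, for example by replacing the block's pattern by its reverse. Simultaneously we apply the mirror change to $\alpha^{-1}$. Because each change is made to a pair $(\alpha,\alpha^{-1})$ together, the result is still an involution, exactly as in Lemma~\ref{lem:paired-skew-decomposition}. The bijection must also send the occurrence set of $p$ blockwise to that of $p'$.

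\textbf{Enumeration.} Let $F$ be the series. Write $D=\mathcal P(x^2)-1$ for the generating function of a nonempty block $(\alpha,\alpha^{-1})$. Decomposing an involution into a maximal sequence of blocks followed by a tail with no leading block gives
\[
F=T+(q^2\text{-weighted blocks})\cdot(\text{correction}),
\]
which has the shape of a geometric series in $(q^2-1)D$. At $q=1$ it must reduce to $\mathcal I(x)$. The target formula has the form $(\mathcal I-q(q-1)D)/(1-(q^2-1)D)$. Expanding it shows that the first block is weighted $q$ instead of $q^2$, or equivalently that the final block loses one occurrence. I would derive the functional equation
\[
F=\mathcal I(x)+(q^2-1)D\,F-q(q-1)D
\]
by inclusion--exclusion over marked blocks, and then solve it.

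\textbf{Main obstacle.} The hardest step is the pointwise characterization. It has to show that the shading together with involutivity forces exactly this block structure, that the blocks are nested or chained in a way that makes the decomposition unique, and that the boundary correction is exactly $q(q-1)D$. The bijection itself should then be a routine check, given that structure.
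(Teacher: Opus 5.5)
Your outline has the right architecture, a right-to-left prefix swap together with a first-block decomposition. However, every substantive step is deferred, the functional equation is reverse-engineered, and the bijection you sketch would fail.

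The first missing ingredient is the pointwise description of occurrences. The representatives are $p=(12,R)$ and $p'=(21,R)$ with $R=\{(0,0),(1,0),(0,1),(1,1),(2,2)\}$. Suppose an occurrence has selected values $a<b$, first selected position $u$ and second selected position $\eta$. The shading forces $\{\pi_i:i<\eta,\ i\neq u\}=\{b+1,\ldots,n\}$. This has several consequences:
\begin{itemize}
\item $b=n-\eta+2$;
\item the selected values are the two smallest entries of $\pi_1\cdots\pi_\eta$;
\item each $\eta$ carries at most one occurrence of $p$ or $p'$;
\item reflection pairs the occurrence at $\eta$ with the one at $n-\eta+2$.
\end{itemize}
For $p'$ this says exactly that $\{\pi_1,\ldots,\pi_{\eta-1}\}=\{n-\eta+2,\ldots,n\}$. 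So the $p'$-occurrences are precisely the skew cuts, and $\occ_{p'}(\pi)$ is one less than the number of skew components of a nonempty $\pi$.

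Your $(\alpha,\alpha^{-1})$ picture is therefore correct for $p'$. Lemma~\ref{lem:paired-skew-decomposition} then yields the formula directly, using the same series $1-\mathcal P(x^2)^{-1}$ and $\mathcal I(x)/\mathcal P(x^2)$ that the paper obtains from its first blocks and $p$-avoiders. Your equation $F=\mathcal I+(q^2-1)DF-q(q-1)D$, which you got by rearranging the target, also becomes provable at this point. Put $q=1+t$ and decompose by the outermost marked cut. The surrounding layer $\alpha\ominus\cdots\ominus\alpha^{-1}$ is arbitrary and counted by $D$. Its one or two cuts contribute $q-1$ if the interior is empty and $q^2-1$ otherwise. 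This gives $F=\mathcal I+D\bigl((q^2-1)(F-1)+(q-1)\bigr)$. Note that the single-occurrence layer is the innermost one, not the first block.

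The genuine gap is the bijection. Exchanging the two selected values $a$ and $b$ is the right move, but it is not an operation inside $\alpha$ mirrored on $\alpha^{-1}$. For a $p'$-occurrence at $\eta=k+1$, the value $b=n-k+1$ lies in $\alpha$, whereas $a=\pi_{k+1}$ lies in the interior. A $p$-occurrence at $k+1$ places $\pi_u=a<n-k+1$ among the first $k$ positions, so it is never a skew cut. Hence any operation preserving the form $\beta\ominus\rho\ominus\beta^{-1}$ with $|\beta|=k$ leaves the occurrence at $k+1$ a $p'$-occurrence. Reversing $\alpha$, your example, is such an operation. The swap of $a$ and $b$ instead destroys the skew cut after position $k$. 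It therefore does not act componentwise on the decomposition of Lemma~\ref{lem:paired-skew-decomposition}, and that lemma cannot show that the result is an involution.

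The paper marks every second selected position $\eta$ of an occurrence of $p$ or $p'$. Processing these from right to left, it swaps the two smallest values of $\pi_1\cdots\pi_\eta$. The substance of the proof is checking four facts:
\begin{itemize}
\item each swap toggles the type of its own occurrence;
\item each swap preserves the set of marked positions and the types of the other occurrences;
\item the final result does not depend on the order of the swaps;
\item $\bigl((a\,b)\pi\bigr)^{-1}=(u\,\eta)\pi^{-1}$, together with reflection-closure of the marked positions, gives $\Phi_n(\pi^{-1})=\Phi_n(\pi)^{-1}$.
\end{itemize}
Only these make $\Phi_n$ a bijection of $\mathcal I_n$ with $\occ_p(\pi)=\occ_{p'}(\Phi_n(\pi))$.
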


\begin{proof}
For the representatives
\[
p=
\pattern{scale=0.6}{2}{1/1,2/2}
 {0/0,1/0,0/1,1/1,2/2}
\qquad\text{and}\qquad
p'=
\pattern{scale=0.6}{2}{1/2,2/1}
 {0/0,1/0,0/1,1/1,2/2},
\]
we construct an involutive bijection
$\Phi_n:\mathcal I_n\to\mathcal I_n$.

Fix $\pi\in\mathcal I_n$, and let
$\eta_1>\cdots>\eta_m$ be the second selected positions of all occurrences
of $p$ or $p'$. Process these positions from right to left. At a
marked position $\eta_i$, interchange the two smallest values in the
prefix ending at $\eta_i$; let $\Phi_n(\pi)$ be the permutation obtained
after all such swaps.

For example,
$57861423
\xrightarrow{(1\,4)}
57864123
\xrightarrow{(4\,5)}
47865123
\xrightarrow{(4\,6)}
67845123.$
Thus $\Phi_8(57861423)=67845123$, and the two occurrences counts
change from $(2,1)$ to $(1,2)$.

Suppose that an occurrence ending at $\eta$ uses positions $u<\eta$ and
values $a<b$. The shading shows that $a$ and $b$ are precisely the
two smallest values in the prefix ending at $\eta$. In particular, each marked position determines a unique occurrence. Hence the swap at
$\eta$ exchanges a $p$-occurrence with a $p'$-occurrence. The same
shading conditions also show that the set of marked positions is
unchanged by each swap. Moreover, the occurrence at any other marked position remains an occurrence of the same pattern, $p$ or $p'$.

The swaps may be performed in any order. Indeed, if two of the swaps
are $(a\,b)$ and $(c\,d)$, then performing $(c\,d)$ first replaces
$(a\,b)$ by its conjugate under $(c\,d)$, and
$\bigl((c\,d)(a\,b)(c\,d)\bigr)(c\,d)
=(c\,d)(a\,b).$
Thus the final permutation is independent of the order.
Since the marked positions are preserved and each swap
is involutive, $\Phi_n^2=\mathrm{id}$.

Finally, transposition of the plot pairs an occurrence with positions
$u<\eta$ and values $a<b$ with the corresponding occurrence in
$\pi^{-1}$. The associated swaps satisfy
$\bigl((a\,b)\pi\bigr)^{-1}
=\pi^{-1}(a\,b)
=(u\,\eta)\pi^{-1}.$
Hence $\Phi_n(\pi^{-1})=\Phi_n(\pi)^{-1}$, so $\Phi_n$ preserves
$\mathcal I_n$. Therefore
$\occ_p(\pi)=\occ_{p'}(\Phi_n(\pi)),$
and the two representatives are equidistributed on involutions.

Let $A(x)$ be the
generating function for involutions avoiding $p$.
Suppose that $\pi$ contains $p$, and choose an occurrence with the smallest second selected position. Let this
occurrence be formed by the elements
$\pi_u=a$ and $\pi_{k+1}=b$, where $u<k+1$ and $a<b$.
The shading forces the other $k-1$ elements preceding position
$k+1$ to be precisely the values larger than $b$. Hence
$n-b=k-1$, so $b=n+1-k$.
Reflection in the main diagonal yields an occurrence with second selected position $b$, since $k+1$ is the smallest second selected position, it follows that $k+1 \le b$, and hence $k \le \lfloor n/2 \rfloor$.

For a fixed $k$, without requiring $k+1$ to be the smallest second
selected position, we first choose $u\in[k]$ in $k$ ways, and then
pair the remaining $k-1$ positions in $[k] \setminus \{u\}$
 with
$b+1,\ldots,n$ in $(k-1)!$ ways.
If $n>2k$, the forced matching
part involves $2k$ elements, with $k+1$ paired with $b$. If
$n=2k$, then $b=k+1$; in this case $k+1$ is a fixed point, and the
shading conditions together with $\pi=\pi^{-1}$ force $u$ to be a
fixed point as well. Thus in either case there are
$k(k-1)!=k!$
possible matchings. Hence all such matchings are counted by
$\mathcal P(x^2)=\sum_{k\geq0}k!x^{2k}.$

Let $B(x)$ count the possible first blocks determined by the
leftmost occurrence. Every nonempty matching has a unique first
block: indeed, its smallest second selected position determines the
leftmost occurrence and hence the corresponding block uniquely.
Deleting this block and standardizing leaves an arbitrary matching.
Hence
$$
\mathcal P(x^2)=1+B(x)\mathcal P(x^2),
\qquad
B(x)=1-\frac{1}{\mathcal P(x^2)}.
$$

Every involution either avoids $p$, or has a unique first block
determined by its leftmost occurrence. The remaining elements are
paired only among themselves, so deleting the first block and
standardizing leaves an involution. Conversely, a first block
together with an arbitrary involution uniquely reconstructs $\pi$.
Therefore
$$
\mathcal I(x)
=
A(x)+B(x)\mathcal I(x),
\qquad
A(x)=\frac{\mathcal I(x)}{\mathcal P(x^2)}.
$$

The same decomposition also records the number of occurrences.
Reflection in the main diagonal sends an occurrence ending at $\eta$ to
one ending at $n-\eta+2$. Since $k+1$ is the smallest second selected
position, every occurrence therefore ends at some
$k+1\le \eta\le b.$
If the remainder is nonempty, then $k+1<b$. The occurrences ending
at $k+1$ and $b$ are distinct. For $k+1<\eta<b$, deleting the first
block and standardizing preserves the shading conditions, so these
occurrences are precisely the occurrences in the remainder. Thus the
first block contributes exactly two occurrences.

If the remainder is empty, then $b=k+1$. The chosen occurrence is
fixed by reflection, and the preceding bounds show that it is the
only occurrence. Hence
$$
F^{\mathcal I}_{\class{125}}(x,q)
=
A(x)+qB(x)
+q^2B(x)\bigl(F^{\mathcal I}_{\class{125}}(x,q)-1\bigr).
$$
Substituting the expressions for $A(x)$ and $B(x)$ gives
\eqref{eq:125-generating-function}, completing the proof.
\end{proof}

\subsection{Adjacent-Transposition Bijection for Class INV-102}

\begin{theorem}\label{thm:102}
For every $n\geq0$, all patterns in $\class{102}$ are equidistributed
on $\mathcal I_n$. Moreover,
\begin{equation}\label{eq:102-generating-function}
F^{\mathcal I}_{\class{102}}(x,q)
=
\mathcal I(x)+(q-1)\sum_{n\geq2}a_nx^n,
\end{equation}
where $a_n$ denotes the number of involutions in $\mathcal I_n$
containing an occurrence. We have $a_2=1$, $a_3=2$, and, for
$n\geq4$,
\[
a_n
=
I_{n-2}
+\sum_{j=0}^{n-3}I_jI_{n-3-j}
+\sum_{k=0}^{\lfloor(n-4)/2\rfloor}
(-1)^k k!\binom{n-2}{2k+2}I_{n-4-2k}.
\]
\end{theorem}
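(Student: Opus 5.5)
We do not know the representatives of $\class{102}$ from the table, so the plan is written for the shape the statement suggests. The formula $\mathcal I(x)+(q-1)\sum a_nx^n$ says every pattern in the class occurs at most once. The title of the subsection says the equidistribution comes from an adjacent-transposition bijection. We therefore expect two representatives $p$ and $p'$ whose underlying permutations are $12$ and $21$, with identical heavy shading. The shading should force the two selected points to occupy adjacent positions $i,i+1$, or adjacent values, anchored at a corner. A typical anchoring would be: the selected values are the two smallest entries of some prefix, or the entries at the first two positions.

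First, we read off from the shading a pointwise description of an occurrence, as in Theorems~\ref{thm:085} and~\ref{thm:250}. We then check uniqueness: involutivity and the shading together should pin down the position of the occurrence.

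Second, we define the bijection. If $\pi$ has an occurrence at adjacent positions $i,i+1$, set $\Psi(\pi)=s\pi s$ with $s=(i\ i{+}1)$; otherwise set $\Psi(\pi)=\pi$. Since $s$ is an involution, conjugation by $s$ preserves $\In$, as for $\Gamma_n$. Geometrically, $s\pi s$ swaps both the two adjacent columns and the two adjacent rows. This turns the selected $12$ into $21$ (or conversely), and the adjacency makes the shaded cells between them empty. We must check two things:
\begin{itemize}
\item the remaining shaded cells are carried onto the shaded cells of $p'$;
\item the occurrence position is recovered from $\Psi(\pi)$, so $\Psi$ is an involution on $\In$ exchanging the sets of $p$-containing and $p'$-containing involutions.
\end{itemize}
The delicate case is when $i,i+1$ are themselves a $2$-cycle or both fixed. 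Then conjugation fixes $\pi$, and we would instead multiply by $s$ on one side, turning $(i\,i{+}1)$ into two fixed points and conversely. We will handle this by a case split on the cycle types at $i$ and $i+1$.

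Third, we count $a_n$ using one representative, likely the $12$ one. We split by the cycle structure of the two selected elements:
\begin{itemize}
\item a forced pair that can be deleted, giving the term $I_{n-2}$;
\item a configuration that splits the remaining elements into two independent involutions on either side, giving the convolution $\sum_j I_jI_{n-3-j}$ (three elements are fixed by the occurrence);
\item a configuration in which the shading forbids certain $2$-cycles crossing a boundary.
\end{itemize}
The last case is counted by inclusion--exclusion over $k$ forbidden transpositions. Choosing the $2k+2$ involved elements among $n-2$ in $\binom{n-2}{2k+2}$ ways and matching them in $k!$ ways, then leaving $I_{n-4-2k}$ free, produces the alternating sum. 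We verify $a_2=1$, $a_3=2$ directly. Finally, because there is at most one occurrence, the distribution polynomial is $I_n+(q-1)a_n$, which gives \eqref{eq:102-generating-function}.

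The main obstacle is the inclusion--exclusion step. We must identify exactly which structures are subtracted and why the matching among $2k+2$ chosen elements yields $k!$ rather than a double factorial. We would first tabulate $a_4,a_5$ by hand against the formula to pin down the correct forbidden configuration before writing the sign-reversing argument.
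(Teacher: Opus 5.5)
Your outline has the same architecture as the paper's proof: at most one occurrence, conjugation by an adjacent transposition with a one-sided product in the degenerate case, and a three-case count ending in inclusion--exclusion. But it is a template reverse-engineered from the formula. The steps that carry the content are missing, and in one place the plan is wrong.

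\textbf{The occurrences and the bijection.} The representatives are $p=(12,R)$ and $p'=(21,R)$ with $R=\{(0,0),(1,0),(2,0),(1,1),(2,1),(1,2)\}$. The shading forces three things: the smaller selected value is $1$, at position $t=\pi_1$; the selected positions are adjacent; and every entry to the right of the second selected position exceeds the larger value $b$. So $p$ sits at positions $t,t+1$ and $p'$ at $t-1,t$, not at ``the two smallest entries of a prefix.''

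Your degenerate case is correctly anticipated: it arises exactly for $t=1$ in $p$, where $1,2$ are fixed, and for $\pos_\sigma(1)=2$ in $p'$. However, an involution can contain both patterns, so your $\Psi$ (conjugate at ``the'' occurrence, identity otherwise) is not a well-defined involution of $\mathcal I_n$. For $213$, the $p$-occurrence gives $s_2\pi s_2=321$, while the degenerate $p'$-occurrence gives $(1\,2)\pi=123$.

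Since each pattern occurs at most once, what you need is a bijection from the $p$-containing involutions to the $p'$-containing ones. It is $(1\,2)\pi$ if $t=1$ and $s_t\pi s_t$ if $t>1$; the inverse is chosen according as $\pos_\sigma(1)=2$ or $\pos_\sigma(1)\ge 3$. The verification you defer rests on three facts:
\begin{itemize}
\item $[b]\subseteq\{\pi_1,\dots,\pi_{t+1}\}$, so $b\le t+1$;
\item $b\neq t$, since $\pi_t=1$;
\item neither $t$ nor $t+1$ occurs to the right of position $t+1$.
\end{itemize}
The third fact means the suffix condition survives the value swap, so $s_t\pi s_t$ has $s_t(b)$ followed by $1$ at positions $t,t+1$.

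\textbf{The count.} This is exactly the hole you flag. The cases are:
\begin{itemize}
\item $t=1$, which forces $b=2$ and gives $I_{n-2}$;
\item $b=t+1$, where $(1\,t)$ and the fixed point $t+1$ split the rest into independent involutions on $\{2,\dots,t-1\}$ and $\{t+2,\dots,n\}$;
\item $2\le b\le t-1$, where $(1\,t)$ and $(b\;t+1)$ are forced.
\end{itemize}
In the last case the suffix condition is equivalent to: no $2$-cycle joins $L=\{2,\dots,b-1\}$ to $R=\{t+2,\dots,n\}$. Inclusion--exclusion over sets of $k$ such forbidden pairs gives $\binom{\ell}{k}\binom{r}{k}k!\,I_{n-4-2k}$. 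The factor is $k!$ rather than a double factorial because the forbidden pairs form a bipartite partial matching between $L$ and $R$.

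Summing over $\ell+m+r=n-4$ then uses $\sum\binom{\ell}{k}\binom{r}{k}=\binom{n-2}{2k+2}$. To see this, take a $(2k+2)$-subset of $\{2,\dots,n-1\}$: its $(k+1)$st and $(k+2)$nd elements are $b$ and $t$, the $k$ below lie in $L$, and the $k$ above lie in $R$ shifted down by one. Without identifying this forbidden configuration, your third case only matches the shape of the formula; it does not prove it.
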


\begin{proof}
For the representatives
\[
p=
\pattern{scale=0.6}{2}{1/1,2/2}{0/0,1/0,2/0,1/1,2/1,1/2}
\qquad\text{and}\qquad
p'=
\pattern{scale=0.6}{2}{1/2,2/1}{0/0,1/0,2/0,1/1,2/1,1/2},
\]
the shading immediately shows that both patterns occur at most once. Let $t=\pi_1$ be the position of the value $1$. 
The pattern $p$ occurs when $t<n$, the second selected value is $b=\pi_{t+1}$, and all elements to the right of $t+1$ are strictly greater than $b$. 
For $p'$, an occurrence exists when $t>1$, the larger selected value is
$b=\pi_{t-1}$, and all elements to the right of $t$ are greater than~$b$.

We construct a bijection $\Phi$ mapping $p$-occurrences to $p'$-occurrences on $\In$:
\begin{itemize}
    \item \textbf{Case 1 ($t=1$):} Since every element to the right of position $t+1$ is greater than $b$, we must have $b=2$. Define $\Phi(\pi)=(1\,2)\pi$. As $\pi$ fixes $1$ and $2$, replacing these fixed points with the transposition $(1\,2)$ preserves involutivity and sends $1$ to position $2$, creating a $p'$-occurrence with $t=2$.

    \item \textbf{Case 2 ($t>1$):} Define $\Phi(\pi)=s_t\pi s_t$, where $s_t=(t\,\,t+1)$. Then conjugation preserves involutions, $\Phi(\pi)$ is again an involution and $\Phi(\pi)_{t+1}=1$. The suffix condition implies
$[b]\subseteq\{\pi_1,\ldots,\pi_{t+1}\}$, so $b\leq t+1$.
Moreover, $b\neq t$, since $\pi_{t+1}=t$ would imply
$\pi_t=t+1$, contradicting $\pi_t=1$. Hence either $b<t$ or
$b=t+1$. If $b<t$, then $\Phi(\pi)_t=b$; moreover,
involutivity gives $\pi_1=t$ and $\pi_b=t+1$, so all elements to
the right of position $t+1$ remain greater than $b$. If
$b=t+1$, then $\Phi(\pi)_t=t$, while all elements to the right of
position $t+1$ are greater than $t+1$. Thus $\Phi(\pi)$ contains
a $p'$-occurrence.

\end{itemize}

For an involution $\sigma$ containing $p'$, let $t=\pos_\sigma(1)$. If $t=2$, then $\Phi^{-1}(\sigma)=(1\,2)\sigma$. If $t>2$, then $\Phi^{-1}(\sigma)=(t-1\,\,t)\sigma(t-1\,\,t)$. These two cases reverse Cases $1$ and $2$, respectively. Hence $\Phi$ is a bijection between the involutions containing $p$ and those containing $p'$.

\begin{example}
For Case~1, let
$\pi=1\,2\,6\,5\,4\,3=(1)(2)(3\,6)(4\,5)$. Here $t=1$, and
the elements $1,2$ in the first two positions form a $p$-occurrence. We have
\[
\Phi(\pi)=(1\,2)\pi=(1\,2)(3\,6)(4\,5)
          =2\,1\,6\,5\,4\,3,
\]
where the elements $2,1$ in the first two positions form the
corresponding $p'$-occurrence.

For Case~2, let
$\pi=3\,4\,1\,2\,5\,6=(1\,3)(2\,4)(5)(6)$. Here $t=3$, and
the elements $1,2$ in positions $3,4$ form a $p$-occurrence. We have
\[
\Phi(\pi)=s_3\pi s_3
=(3\,4)(1\,3)(2\,4)(3\,4)
=(1\,4)(2\,3)(5)(6)
=4\,3\,2\,1\,5\,6.
\]
The elements $2,1$ in positions $3,4$ now form a $p'$-occurrence.
\end{example}

Let $a_n$ be the number of involutions in $\mathcal I_n$ containing $p$. Since $p$
occurs at most once,
$\sum_{\pi\in\mathcal I_n}q^{\occ_p(\pi)}
=
(I_n-a_n)+qa_n.$
We now count $a_n$. Let $t=\pi_1=\pos_\pi(1)$ and $b=\pi_{t+1}$. Suppose first that $t=1$. Since every element to the right of $b=\pi_2$ is larger than $b$, we must have $b=2$. Thus $1$ and $2$ are fixed, while the remaining $n-2$ elements can form an arbitrary involution, giving $I_{n-2}$ possibilities.
Next suppose that $t>1$ and $b=t+1$. Then $(1\,t)$ is a transposition and $t+1$ is fixed. The elements in positions $\{2,\ldots,t-1\}$ and $\{t+2,\ldots,n\}$ can form arbitrary involutions independently. Hence this case contributes
$\sum_{t=2}^{n-1} I_{t-2} I_{n-t-1}.$
% $= \sum_{j=0}^{n-3} I_j I_{n-3-j}. $

Finally, suppose that $2\le b\le t-1$. The transpositions
$(1\,t)$ and $(b\,\,t+1)$ are forced. The remaining $n-4$ labels form the following three disjoint
intervals:
\[
L = \{2, \ldots, b-1\}, \quad M = \{b+1, \ldots, t-1\}, \quad R = \{t+2, \ldots, n\}.
\]
Let their respective lengths be $\ell, m,$ and $r$, which satisfy $\ell+m+r = n-4$. 

The occurrence condition imposes the restriction that no element of
$L$ can be paired with an element of $R$; pairings involving $M$ are
otherwise allowed. 
Hence, by inclusion--exclusion, for fixed
$\ell,m,r$ these involutions are counted by
\[
\sum_{k=0}^{\min(\ell,r)}
(-1)^k
\binom{\ell}{k}\binom{r}{k}k!\,
I_{n-4-2k}.
\]
Indeed, for $0\leq k\leq\min(\ell,r)$, choose $k$ elements from each
of $L$ and $R$ and pair them in $k!$ ways; the remaining
$n-4-2k$ elements can form an arbitrary involution.
Using
\[
\sum_{\substack{\ell+m+r=n-4\\ \ell,r\geq k}}
\binom{\ell}{k}\binom{r}{k}
=
\binom{n-2}{2k+2},
\]
we obtain
\[
\sum_{k=0}^{\lfloor(n-4)/2\rfloor}
(-1)^k k!\binom{n-2}{2k+2}I_{n-4-2k}
\]
possibilities in this case.

Combining the three cases gives the stated formula for $a_n$, and hence
\[
F^{\mathcal I}_{\class{102}}(x,q)
=
\mathcal I(x)+(q-1)\sum_{n\geq2}a_nx^n.
\qedhere
\]
\end{proof}

\section{Equidistribution via a marked-occurrence bijection}
\label{sec:marked-occurrence-bijection}

In this section, we prove the remaining equidistribution by means of
marked occurrences. In the involution setting, a marked occurrence
must be considered together with its reflection in the main diagonal.
We therefore adapt the marked-occurrence bijection of Zhang and
Zhao~\cite{ZhangZhao} so that it preserves involutions.

\begin{theorem}\label{thm:099}
For every $n\geq0$, all patterns in $\class{099}$ are equidistributed
on $\mathcal I_n$.
\end{theorem}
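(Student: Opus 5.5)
The plan is to reduce, by Lemma~\ref{lem:involution-symmetries}, to two representatives of $\class{099}$: one pattern $p$ from the two representatives lying in Class~69 of the unrestricted classification, and one pattern $p'$ among the two representatives that are complements of the remaining Class~69 patterns. It should be enough to build, for each $n$, a bijection $\Psi_n\colon\In\to\In$ with $\occ_p(\pi)=\occ_{p'}(\Psi_n(\pi))$. If the two representatives inside Class~69 are not already related by an involution symmetry, the same construction, with the roles of the shadings adjusted, will link them as well. As in the proofs of Theorems~\ref{thm:046} and~\ref{thm:176}, I will first give a pointwise description of an occurrence of each pattern. For each representative, the shading should force one selected point to be determined by the other, for instance as an adjacent value or an adjacent position. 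An occurrence is then encoded by a \emph{marked} pair of points, that is, an edge between two plot points, and its mesh condition becomes an emptiness condition on a rectangle spanned by that edge.

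The second step adapts the Zhang--Zhao marked-occurrence bijection~\cite{ZhangZhao}. I will view the set of occurrence edges of $p$ in $\pi$ as a graph on the plot points and show that its components are stars or paths, as in the unrestricted case. The bijection in~\cite{ZhangZhao} replaces each star by a path on the same vertex set, and each path by a star, with the values or positions reassigned inside the component. This turns $p$-occurrences into $p'$-occurrences, keeps the number of edges in each component, and leaves all points outside the marked set fixed. To preserve involutions, I will take as marked set the union of the occurrence set with its reflection in the main diagonal. Since $\pi=\pi^{-1}$, the reflection of an occurrence of $p$ is an occurrence of $p^{-1}$, and $p^{-1}$ is equidistributed with $p$ pointwise on $\In$. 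The symmetric closure is therefore a union of forest components that the reflection either permutes in pairs or fixes individually.

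The third step is to define the transformation on a reflection-invariant block. For a pair of components $K$ and $K^{\mathsf t}$ swapped by reflection, I transform $K$ as in~\cite{ZhangZhao} and define the image of $K^{\mathsf t}$ as the reflection of the image of $K$. For a self-reflective component, I must check that the star--path transformation commutes with reflection. This should hold because the component's vertices are symmetric and the rule is described only in terms of relative order. Then I will verify three things. First, the resulting permutation is an involution, since its plot is symmetric. Second, the marked set and its component structure are recovered from the image, which gives invertibility. Third, the number of $p'$-occurrences equals the number of $p$-occurrences, with no new occurrences created or destroyed outside the marked set, because the shaded rectangles only see points inside the component.

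The main obstacle is this last verification for self-symmetric components and for components that touch the diagonal, for example at fixed points or along a $2$-cycle. In these cases the star and the path share points with their own reflections, and the unrestricted transformation may break the symmetry or create spurious occurrences across the diagonal. My first attempt will be to classify the self-reflective components: I expect them to be paths whose middle edge or vertex lies on the diagonal, or stars centred on a fixed point. I would then check directly that a symmetric version of the rule exists, such as reversing the path about its centre. If this fails, I would fall back on choosing the canonical representative in each reflection pair through the lower-left point and defining the map only there.
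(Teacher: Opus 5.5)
\textbf{The core step fails.} You aim for a bijection $\Psi_n\colon\In\to\In$ with $\occ_p(\pi)=\occ_{p'}(\Psi_n(\pi))$. You build it by applying the star-to-path transformation to the \emph{entire} occurrence graph of $\pi$, and you assert that no new $p'$-occurrences arise because the shaded cells only see points of the component. That assertion is false, and the resulting map is not even injective.

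Take $p=12$ and $p'=21$, both with shaded cells $(0,0),(1,1),(2,1),(1,2)$. These are self-inverse representatives of the two symmetry orbits of \class{099}, so your ``reflection closure'' of a full occurrence set does nothing. For $\pi=5462137$, the $p$-occurrences, recorded on values, are exactly the two stars $\{1,3\},\{2,3\}$ and $\{4,6\},\{5,6\}$. The Zhang--Zhao rule is the cyclic shift of values $(1\,2\,3)(4\,5\,6)$, and it gives $\sigma=6543217$. This $\sigma$ has five $p'$-occurrences $\{i,i+1\}$ for $1\le i\le 5$: the new edge $\{3,4\}$ joins the two transformed components. Moreover, $\pi'=5432167$ has as occurrence graph the single star on $\{1,\dots,6\}$ centred at $6$. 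The shift $(1\,2\,3\,4\,5\,6)$ sends it to the same $\sigma$. So neither count preservation nor your step ``recover the marked set from the image'' can hold.

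\textbf{The missing idea: mark subsets, not whole occurrence sets.} The Zhang--Zhao argument is a bijection of marked objects, not of permutations. For each $k$ it matches pairs $(\pi,M)$ with $M\subseteq\Occ_p(\pi)$, $|M|=k$, to pairs $(\sigma,N)$ with $N\subseteq\Occ_{p'}(\sigma)$, $|N|=k$. This yields $\sum_{\pi\in\In}\binom{\occ_p(\pi)}{k}=\sum_{\sigma\in\In}\binom{\occ_{p'}(\sigma)}{k}$ for every $k$, and equidistribution follows from $q^m=\sum_k\binom{m}{k}(q-1)^k$. Extra occurrences in $\sigma$ are harmless in this setting. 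In the example above, the two preimages become the distinct pairs $(\sigma,\{\{1,2\},\{2,3\},\{4,5\},\{5,6\}\})$ and $(\sigma,\{\{1,2\},\{2,3\},\{3,4\},\{4,5\},\{5,6\}\})$.

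\textbf{Why the diagonal closure is then needed.} A marked subset $M$ need not be reflection-stable, so the closure is applied to $M$, not to the full occurrence set:
\begin{itemize}
\item transform $S=M\cup\theta_\pi(M)$, where $\theta_\pi(\{a,b\})=\{\pi(a),\pi(b)\}$;
\item mark only $N=\lambda_S(M)$ in the target;
\item prove the commutation $\theta_\sigma\circ\lambda_S=\lambda_S\circ\theta_\pi$, which is what lets the inverse map rebuild $\lambda_S(S)$ as the closure of $N$ from $(\sigma,N)$ alone.
\end{itemize}

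\textbf{Self-reflective components need no special rule.} Your worry about them dissolves. On a star $a_1<\cdots<a_m$ centred at $a_m$, the shading gives $\pi(a_{m-1})<\cdots<\pi(a_1)<\pi(a_m)$. Hence $\pi$ conjugates each component cycle to the inverse cycle on the reflected component, whether or not that component is self-reflective. So $\pi\gamma_S\pi=\gamma_S^{-1}$, and $\sigma=\gamma_S\pi$ is an involution. A modified rule such as reversing a path about its centre is not needed. Introducing one would also detach you from the Zhang--Zhao occurrence checks you are relying on.
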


\begin{proof}
  We consider
\[
P=
\pattern{scale=0.6}{2}{1/1,2/2}{0/0,1/1,2/1,1/2}
\qquad\text{and}\qquad
Q=
\pattern{scale=0.6}{2}{1/2,2/1}{0/0,1/1,2/1,1/2}.
\]
Their common shading shows
that $P^{-1}=P$ and $Q^{-1}=Q$.
For a permutation $\tau$, put $\rho_a=\pos_\tau(a)$. We identify an
occurrence with the unordered edge $\{a,b\}$ on its selected values,
where $a<b$. The classical part gives $\rho_a<\rho_b$ for a $P$-edge and
$\rho_b<\rho_a$ for a $Q$-edge. For $k\geq0$, define
\[
\begin{aligned}
\mathcal X_{n,k}
&=\{(\pi,M):\pi\in\mathcal I_n,\,
M\subseteq\Occ_P(\pi),\,|M|=k\},\\
\mathcal Y_{n,k}
&=\{(\sigma,N):\sigma\in\mathcal I_n,\,
N\subseteq\Occ_Q(\sigma),\,|N|=k\}.
\end{aligned}
\]
We construct mutually inverse maps between these two sets.

We first determine the possible graphs of marked occurrences. For
$P$-edges, the shading gives two restrictions.
\begin{itemize}
\item \emph{A vertex cannot be the smaller endpoint of two marked
$P$-edges.}
Suppose that $\{a,b\}$ and $\{a,c\}$ are marked $P$-edges with
$a<b<c$. In the occurrence $\{a,c\}$, the intermediate value $b$
cannot occur to the right of the selected element of value $a$. Hence
$\rho_b<\rho_a$, contrary to the orientation $\rho_a<\rho_b$ of the $P$-edge
$\{a,b\}$.

\item \emph{A vertex cannot be the larger endpoint of one marked
$P$-edge and the smaller endpoint of another.}
If $\{a,b\}$ and $\{b,c\}$ were both marked $P$-edges with $a<b<c$,
then in the occurrence $\{b,c\}$ the value $a<b$ could not lie to the
left of the selected element of value $b$. Thus $\rho_a>\rho_b$, again contradicting the
orientation of $\{a,b\}$.
\end{itemize}
Any two adjacent marked P-edges must therefore share their larger‑value endpoint. By connectedness, every nontrivial component has a unique largest‑value center, so the marked‑occurrence graph is a star forest.

For $Q$-edges, the two corresponding restrictions are as follows.
\begin{itemize}
\item \emph{Every vertex has at most one smaller neighbour.}
Suppose that $a<c<b$ and that both $\{a,b\}$ and $\{c,b\}$ are marked.
In the occurrence $\{a,b\}$, the intermediate value $c$ must lie to
the left of the selected element of value $b$, thus $\rho_c<\rho_b$, contrary
to the orientation $\rho_b<\rho_c$ of the edge $\{c,b\}$.

\item \emph{Every vertex has at most one larger neighbour.}
Suppose that $a<b<c$ and that both $\{a,b\}$ and $\{a,c\}$ are marked.
The occurrence $\{a,c\}$ forces $\rho_b<\rho_c<\rho_a$. But then, in the
$Q$-edge $\{a,b\}$, the element of value $c$ lies between the two
selected positions and above the larger selected value $b$, so it lies in the shaded cell $(1,2)$, a contradiction.
\end{itemize}
The strict decrease of positions along an oriented path excludes cycles, so every nontrivial component is a path $a_1-a_2-\cdots-a_m$ with $a_1<a_2<\cdots<a_m$. Each edge is a 21-pattern occurrence, which forces $\rho_{a_1}>\rho_{a_2}>\cdots>\rho_{a_m}$. Hence the marked‑occurrence graph is a disjoint union of such decreasing‑position Q-paths.

We next close each marked set under reflection in the main diagonal.
For $\pi\in\mathcal I_n$, this reflection sends an occurrence edge $e=\{a,b\}$ to
$
\theta_\pi(e)=\{\pi(a),\pi(b)\}.
$
Since $P^{-1}=P$ and $Q^{-1}=Q$, the map $\theta_\pi$ preserves
occurrences of either pattern. Moreover,
$\theta_\pi^2=\mathrm{id}$ since $\pi^2=\mathrm{id}$.

Although the plot of an involution is symmetric, an arbitrary marked
set $M$ need not contain the reflected copy of each of its edges. We
therefore define its symmetric closure by
$\cl_\pi(M)=M\cup\theta_\pi(M)$. This is the smallest
$\theta_\pi$-stable marked set containing $M$, and it forces every
marked feature to occur together with its diagonal reflection. This
pairing is what later makes the component cycles occur in reflected
inverse pairs. In particular, if
$(\pi,M)\in\mathcal X_{n,k}$ and $S=\cl_\pi(M)$, then
$S\subseteq\Occ_P(\pi)$ and $S$ is a $\theta_\pi$-stable star forest.

We now define the forward bijection $\Phi_{n,k}$.
Take $(\pi,M)\in\mathcal X_{n,k}$ and set $S=\cl_\pi(M)$. We perform
two separate operations: the first changes the underlying permutation
$\pi$, and the second changes the marked set $M$.
\begin{enumerate}
\item \emph{Changing the permutation $\pi$.}
For every nontrivial star component
$C=\{a_1<a_2<\cdots<a_m\}$, whose edges are
$\{a_i,a_m\}$ for $1\leq i<m$, apply the following cyclic shift to the
values:
\[
a_1\mapsto a_2,\quad
a_2\mapsto a_3,\quad
\ldots,\quad
a_{m-1}\mapsto a_m,\quad
a_m\mapsto a_1.
\]
Different connected components have no common vertices. We therefore
perform these cyclic shifts independently on all components of $S$.
Let $\gamma_S$ denote the resulting simultaneous value replacement;
it leaves every value outside the nontrivial components unchanged.
The target permutation is
$\sigma=\gamma_S\pi$. Equivalently,
$(\gamma_S\pi)(i)=\gamma_S(\pi(i))$, so the elements of the one-line
notation of $\pi$ are replaced according to the displayed rules.

\item \emph{Changing the marked set $M$.}
On each of the same components, replace every star edge
$\{a_i,a_m\}$ with the path edge $\{a_i,a_{i+1}\}$. This rule is
defined on the full closure $S$ and gives a bijection
$\lambda_S:S\longrightarrow\lambda_S(S)$. Only the images of the
edges originally belonging to $M$ are marked in the target object; we
therefore set $N=\lambda_S(M)$.
\end{enumerate}

This componentwise transformation is adapted from the marked-occurrence
bijection of Zhang and Zhao~\cite[Section~2]{ZhangZhao}.
More precisely, position reversal does not send our $P$ directly to
our $Q$. It sends our $P$ to the target representative used by Zhang
and Zhao, and it sends our $Q$ to their source representative.
Therefore, starting from our $P$, we first reverse positions to reach
their target, apply the inverse of their bijection to reach their
source, and then reverse positions once more to reach our $Q$. After
this change of coordinates, their inverse bijection is exactly the
componentwise transformation defined above:
$(\tau,S)\longmapsto(\gamma_S\tau,\lambda_S(S)).$
Thus this is not merely analogous to our rule; it is the same rule
written in our coordinates. Specifically, the left, middle, and right
region checks in that section show for each component that every image
edge is a $Q$-occurrence, and the subsequent two-component check shows
that performing the replacements on disjoint components does not
interfere with those occurrences. Therefore,
$\lambda_S(S)\subseteq\Occ_Q(\gamma_S\tau)$. Applying this with
$\tau=\pi$ gives
$N\subseteq\lambda_S(S)\subseteq\Occ_Q(\sigma)$. Since $\lambda_S$ is
injective, $|N|=|M|=k$. When $k=0$, all the products and marked sets
are empty, and the bijection is the identity.

It remains to prove that $\sigma$ is an involution and that the
symmetric closure can be recovered from $(\sigma,N)$. Let
$C=\{a_1<\cdots<a_m\}$ be a star component of $S$, and write
$\pi(C)=\{b_1<\cdots<b_m\}$. Because $S$ is
$\theta_\pi$-stable, $\theta_\pi$ is an automorphism of its occurrence
graph, so $\pi(C)$ is again a star component of $S$.

Every $\{a_i,a_m\}$ is a $P$-edge. Hence $\rho_{a_i}<\rho_{a_m}$ for
$i<m$. If $i<j<m$, then in the occurrence $\{a_i,a_m\}$ the
intermediate value $a_j$ cannot lie to the right of the element of value
$a_i$, because it would lie in one of the shaded cells $(1,1)$ or
$(2,1)$. Therefore $\rho_{a_j}<\rho_{a_i}$. Since $\pi$ is an involution,
$\rho_a=\pi(a)$, and consequently
$\pi(a_{m-1})<\cdots<\pi(a_2)<\pi(a_1)<\pi(a_m).$
It follows that $\pi(a_i)=b_{m-i}$ for $1\leq i<m$, while
$\pi(a_m)=b_m$. Geometrically, the relabelling
$a\mapsto\pi(a)$ is induced by reflection in the main diagonal. The
displayed order shows that this reflection reverses the leaf order of
the star while sending its largest center to the largest center of the
image star. Consequently, the forward cyclic order on $C$ is seen as
the reverse cyclic order on $\pi(C)$. Algebraically, conjugating
$(a_1\,a_2\,\cdots\,a_m)$ by $\pi$ therefore gives the inverse
component cycle on $\pi(C)$. Since $\pi$ permutes the components of
$S$, taking the product over all components gives
$\pi\gamma_S\pi=\gamma_S^{-1}.$
Thus
$\sigma^2=(\gamma_S\pi)^2
=\gamma_S(\pi\gamma_S\pi)=\mathrm{id}$, and
$\sigma\in\mathcal I_n$.

We next show that the edge replacement respects the diagonal pairing
used in the symmetric closure. Concretely, reflecting an edge first
and then replacing it must give the same target edge as replacing it
first and then reflecting it in the plot of $\sigma$. This fact is
needed because the inverse map receives only $(\sigma,N)$ and must
reconstruct the full transformed forest $\lambda_S(S)$ from the
symmetric closure of $N$. For $e_i=\{a_i,a_m\}$, the component order
gives
$\theta_\pi(e_i)=\{b_{m-i},b_m\}$. If $i<m-1$, then
\[
\theta_\sigma(\lambda_S(e_i))
=\{\sigma(a_i),\sigma(a_{i+1})\}
=\{b_{m-i},b_{m-i+1}\}
=\lambda_S(\theta_\pi(e_i)).
\]
For $i=m-1$, the cyclic shift on $\pi(C)$ sends $b_m$ to $b_1$ and
$b_1$ to $b_2$, so both sides are the unordered edge $\{b_1,b_2\}$.
Hence
$\theta_\sigma(\lambda_S(e))
=\lambda_S(\theta_\pi(e))$ for every $e\in S$. Using
$S=M\cup\theta_\pi(M)$ and $N=\lambda_S(M)$, we obtain
\[
\cl_\sigma(N)
=N\cup\theta_\sigma(N)
=\lambda_S(M)\cup\lambda_S(\theta_\pi(M))
=\lambda_S(S).
\]
Thus the inverse map can recover the full path forest used by the
forward bijection from $(\sigma,N)$ alone. Therefore the
bijection defines a map
$\Phi_{n,k}:\mathcal X_{n,k}\longrightarrow\mathcal Y_{n,k}$.

We now construct the inverse map $\Psi_{n,k}$. Given $(\sigma,N)\in\mathcal Y_{n,k}$, let $T=\cl_\sigma(N)$. Since $\theta_\sigma$ preserves $Q$-occurrences, $T\subseteq\Occ_Q(\sigma)$, and every nontrivial component of $T$ is a decreasing $Q$-path. For each component
$D=\{a_1<\cdots<a_m\}$, apply the inverse cyclic shift
\[
a_1\mapsto a_m,\quad
a_m\mapsto a_{m-1},\quad
\ldots,\quad
a_3\mapsto a_2,\quad
a_2\mapsto a_1,
\]
 and replace every path edge $\{a_i,a_{i+1}\}$ by the star edge $\{a_i,a_m\}$. Let $\delta_T$ be the product of these cycles over the disjoint components of $T$, and denote the resulting edge bijection by $\mu_T$. Define $\pi=\delta_T\sigma$, $M=\mu_T(N)$, and $\Psi_{n,k}(\sigma,N)=(\pi,M)$. The inverse componentwise transformation gives $\mu_T(T)\subseteq\Occ_P(\pi)$. Moreover, the calculations for the forward map reverse to give $\sigma\delta_T\sigma=\delta_T^{-1}$ and $\theta_\pi\circ\mu_T=\mu_T\circ\theta_\sigma$. Hence $\pi^2=\mathrm{id}$ and $\cl_\pi(M)=\mu_T(T)$. Therefore, $\pi\in\In$, $M\subseteq\Occ_P(\pi)$, and $|M|=|N|=k$, so $\Psi_{n,k}$ is well defined.

If $(\sigma,N)=\Phi_{n,k}(\pi,M)$ and $S=\cl_\pi(M)$, then $T=\lambda_S(S)$, $\delta_T=\gamma_S^{-1}$, and $\mu_T=\lambda_S^{-1}$. Thus $\Psi_{n,k}(\sigma,N)=(\pi,M)$. Conversely, starting from $T$, we have $\lambda_{\mu_T(T)}=\mu_T^{-1}$ and $\gamma_{\mu_T(T)}=\delta_T^{-1}$, which gives $\Phi_{n,k}\circ\Psi_{n,k}=\mathrm{id}$. Hence $\Phi_{n,k}$ and $\Psi_{n,k}$ are mutually inverse bijections.

For a fixed involution $\pi$, there are
$\binom{\occ_P(\pi)}{k}$ ways to choose $k$ distinct
$P$-occurrences. Hence the bijection gives, for every $k\geq0$,
\[
\sum_{\pi\in\mathcal I_n}\binom{\occ_P(\pi)}{k}
=
\sum_{\sigma\in\mathcal I_n}\binom{\occ_Q(\sigma)}{k}.
\]
Multiplying by $(q-1)^k$, summing over $k\geq0$, and using
$q^m=\sum_{k\geq0}\binom{m}{k}(q-1)^k$, we obtain
\[
\sum_{\pi\in\mathcal I_n}q^{\occ_P(\pi)}
=
\sum_{\sigma\in\mathcal I_n}q^{\occ_Q(\sigma)}.
\]
Thus $P$ and $Q$ have the same distribution on $\mathcal I_n$.
Together with Lemma~\ref{lem:involution-symmetries}, this proves that
all patterns in $\class{099}$ are equidistributed.
\end{proof}

\begin{example}\label{ex:099-map}
Let $P$ and $Q$ be the representatives used in the proof. Take
$\pi=5462137=(1\,5)(2\,4)(3\,6)(7)$ and mark the two
$P$-occurrences $M=\{\{1,3\},\{2,3\}\}$. Diagonal reflection gives
$\theta_\pi(\{1,3\})=\{5,6\}$ and
$\theta_\pi(\{2,3\})=\{4,6\}$. Hence
$S=\cl_\pi(M)=\{\{1,3\},\{2,3\},\{4,6\},\{5,6\}\}$, the disjoint
union of the star on $1<2<3$ centered at $3$ and the star on
$4<5<6$ centered at $6$.

On the first component, cyclically shift the values by
$1\mapsto2$, $2\mapsto3$, and $3\mapsto1$; on the second, use
$4\mapsto5$, $5\mapsto6$, and $6\mapsto4$. The value $7$
is unchanged. Thus $\gamma_S=(1\,2\,3)(4\,5\,6)$, and this operation
changes the permutation $\pi$ to
$\sigma=\gamma_S\pi=6543217=(1\,6)(2\,5)(3\,4)(7)$. Independently,
$\lambda_S$ changes the occurrence edges: it replaces the two stars
in $S$ by the paths with edges $\{1,2\},\{2,3\}$ and
$\{4,5\},\{5,6\}$, respectively. Since only the edges of the first
star belong to $M$, their image is
$N=\lambda_S(M)=\{\{1,2\},\{2,3\}\}$. Therefore the forward map is
\[
\Phi_{7,2}\bigl(5462137,\{\{1,3\},\{2,3\}\}\bigr)
=
\bigl(6543217,\{\{1,2\},\{2,3\}\}\bigr).
\]

The four edges $\{1,2\},\{2,3\},\{4,5\},\{5,6\}$ are
$Q$-occurrences of $\sigma$. Moreover,
$\theta_\sigma(\{1,2\})=\{5,6\}$ and
$\theta_\sigma(\{2,3\})=\{4,5\}$. It follows that
$T=\cl_\sigma(N)=\{\{1,2\},\{2,3\},\{4,5\},\{5,6\}\}$, the
disjoint union of the decreasing $Q$-paths on $1<2<3$ and
$4<5<6$.

For the inverse map,
$\delta_T=(1\,3\,2)(4\,6\,5)=\gamma_S^{-1}$, and $\mu_T$ sends
the marked path edges $\{1,2\},\{2,3\}$ back to the star
edges $\{1,3\},\{2,3\}$. Hence
\[
\Psi_{7,2}\bigl(6543217,\{\{1,2\},\{2,3\}\}\bigr)
=
\bigl(5462137,\{\{1,3\},\{2,3\}\}\bigr),
\]
which recovers the original marked involution.
\end{example}

% \begin{proposition}\label{prop:099-distribution}
% Let

% $$
% A_n(q)
% =
% \sum_{\pi\in\mathcal I_n}q^{\occ_P(\pi)}
% =
% \sum_{\pi\in\mathcal I_n}q^{\occ_Q(\pi)}.
% $$

% Then $A_0(q)=A_1(q)=1$. For $n\geq2$, put $m=n-2$.
% For $\sigma\in\mathcal I_m$ and $1\leq t\leq m+1$, define

% $$
% e_t(\sigma)
% =
% \#\{(i,j)\in\Occ_P(\sigma):
% i<t,\ \sigma_i<t\},
% $$

% and

% $$
% u_t(\sigma)
% =
% \begin{cases}
% 1,
% & t\leq m\text{ and }\sigma_t
%   \text{ is a right-to-left minimum of }\sigma,\\
% 0,
% & t=m+1.
% \end{cases}
% $$

% Then
% \begin{equation}\label{eq:099-insertion}
% A_n(q)
% =
% J_{n-1}+(q-1)J_{n-2}
% +
% \sum_{\sigma\in\mathcal I_{n-2}}
% \sum_{t=1}^{n-1}
% q^{e_t(\sigma)+2u_t(\sigma)}.
% \end{equation}
% \end{proposition}

\section{Conclusion}
\label{conclude}

The main result of this paper is a complete classification of the distribution of length-two mesh patterns on involutions. Among the $1024$ such patterns, we obtain exactly $287$ equidistribution classes, of which $262$ are trivial and $25$ are nontrivial. In addition, we derive explicit distribution formulas for $24$ of the nontrivial classes. Obtaining explicit formulas for the distributions of the remaining classes is left as an open direction for future research. Some of these cases admit simple enumerations; for example, this occurs when all cells of the mesh pattern are shaded. However, we expect many of the other cases to exhibit substantially richer structure, and obtaining closed forms or effective recurrences for their distributions may require new combinatorial arguments.

\section*{Acknowledgments}
Philip B.~Zhang was supported by the National Natural Science
	Foundation of China (Grant No.~12171362) and Natural Science Foundation of
	Tianjin Municipality (Grant No.~25JCYBJC00430).

\appendix

\section*{Appendix}
\label{app:trivial-classes}

Table~\ref{tab:involution-trivial} lists the $262$ classes that consist of a
single $\langle i,rc\rangle$-orbit. Their equivalences therefore follow from
inverse and reverse--complement symmetries. The $25$ nontrivial classes,
each containing more than one such orbit, are omitted.

\begingroup
\renewcommand{\arraystretch}{1.4}
\setlength{\tabcolsep}{1pt}
\setlength{\LTleft}{-9mm plus 1fill}
\setlength{\LTright}{-9mm plus 1fill}
\setlength{\LTcapwidth}{174mm}
\small
% [inline block 1: 1 envs, 58603 chars -> data_tex | \begin{longtable}{|c|>{\centering\arraybackslash}p{32mm}||c|>{\centering\arraybackslash}p{32mm}||c|>{\centering\arraybac...]
\begin{longtable}{|c|>{\centering\arraybackslash}p{32mm}||c|>{\centering\arraybackslash}p{32mm}||c|>{\centering\arraybackslash}p{32mm}||c|>{\centering\arraybackslash}p{32mm}|}
\caption{The $262$ trivial equidistribution classes of length-two mesh patterns.}
\label{tab:involution-trivial}\\
\hline
\multicolumn{1}{|c|}{\footnotesize Class} &
\multicolumn{1}{c||}{\footnotesize trivial} &
\multicolumn{1}{c|}{\footnotesize Class} &
\multicolumn{1}{c||}{\footnotesize trivial} &
\multicolumn{1}{c|}{\footnotesize Class} &
\multicolumn{1}{c||}{\footnotesize trivial} &
\multicolumn{1}{c|}{\footnotesize Class} &
\multicolumn{1}{c|}{\footnotesize trivial} \\
\hline
\endfirsthead

\multicolumn{8}{c}{\tablename\ \thetable\ (continued)}\\
\hline
\multicolumn{1}{|c|}{\footnotesize Class} &
\multicolumn{1}{c||}{\footnotesize trivial} &
\multicolumn{1}{c|}{\footnotesize Class} &
\multicolumn{1}{c||}{\footnotesize trivial} &
\multicolumn{1}{c|}{\footnotesize Class} &
\multicolumn{1}{c||}{\footnotesize trivial} &
\multicolumn{1}{c|}{\footnotesize Class} &
\multicolumn{1}{c|}{\footnotesize trivial} \\
\hline
\endhead

\hline
\multicolumn{8}{r}{\footnotesize Continued on the next page}\\
\endfoot

\hline
\endlastfoot
{\scriptsize\texttt{INV-001}} & $\pattern{scale=0.48}{2}{1/1,2/2}{}$ & {\scriptsize\texttt{INV-002}} & $\pattern{scale=0.48}{2}{1/1,2/2}{0/0}\hspace{0.5mm}\pattern{scale=0.48}{2}{1/1,2/2}{2/2}$ & {\scriptsize\texttt{INV-003}} & $\pattern{scale=0.48}{2}{1/1,2/2}{1/0}\hspace{0.5mm}\pattern{scale=0.48}{2}{1/1,2/2}{0/1}\hspace{0.5mm}\pattern{scale=0.48}{2}{1/1,2/2}{2/1}\hspace{0.5mm}\pattern{scale=0.48}{2}{1/1,2/2}{1/2}$ & {\scriptsize\texttt{INV-005}} & $\pattern{scale=0.48}{2}{1/1,2/2}{2/0}\hspace{0.5mm}\pattern{scale=0.48}{2}{1/1,2/2}{0/2}$ \\ \hline
{\scriptsize\texttt{INV-006}} & $\pattern{scale=0.48}{2}{1/1,2/2}{0/0,2/0}\hspace{0.5mm}\pattern{scale=0.48}{2}{1/1,2/2}{0/0,0/2}\hspace{0.5mm}\pattern{scale=0.48}{2}{1/1,2/2}{2/0,2/2}\hspace{0.5mm}\pattern{scale=0.48}{2}{1/1,2/2}{0/2,2/2}$ & {\scriptsize\texttt{INV-007}} & $\pattern{scale=0.48}{2}{1/1,2/2}{1/0,2/0}\hspace{0.5mm}\pattern{scale=0.48}{2}{1/1,2/2}{2/0,2/1}\hspace{0.5mm}\pattern{scale=0.48}{2}{1/1,2/2}{0/1,0/2}\hspace{0.5mm}\pattern{scale=0.48}{2}{1/1,2/2}{0/2,1/2}$ & {\scriptsize\texttt{INV-008}} & $\pattern{scale=0.48}{2}{1/1,2/2}{0/0,1/0,2/0}\hspace{0.5mm}\pattern{scale=0.48}{2}{1/1,2/2}{0/0,0/1,0/2}\hspace{0.5mm}\pattern{scale=0.48}{2}{1/1,2/2}{2/0,2/1,2/2}\hspace{0.5mm}\pattern{scale=0.48}{2}{1/1,2/2}{0/2,1/2,2/2}$ & {\scriptsize\texttt{INV-009}} & $\pattern{scale=0.48}{2}{1/1,2/2}{1/0,0/1}\hspace{0.5mm}\pattern{scale=0.48}{2}{1/1,2/2}{2/1,1/2}$ \\
\hline
\end{longtable}
\endgroup
\end{document}